\tikzset{node distance=2em, ch/.style={circle,draw,on chain,inner sep=2pt},chj/.style={ch,join},every path/.style={shorten >=4pt,shorten <=4pt},line width=1pt,baseline=-1ex}
\tikzstyle{startstop} = [rectangle, rounded corners, minimum width=3cm, minimum height=1cm,text centered, draw=black, fill=red!30]
\tikzstyle{io} = [rectangle, trapezium left angle=70, trapezium right angle=110, minimum width=3cm, minimum height=1cm, text centered, draw=black, fill=blue!30]
\tikzstyle{process} = [rectangle, minimum width=3cm, minimum height=1cm, text centered, draw=black, fill=orange!30]
\tikzstyle{decision} = [rectangle, minimum width=1cm, minimum height=1cm, text centered, draw=black, fill=green!30]
\tikzstyle{arrow} = [thick,->,>=stealth]
 \theoremstyle{plain}    
 \newtheorem{thm}{Theorem} 
  \theoremstyle{plain}    
 \theoremstyle{plain}    
 \newtheorem{lem}[thm]{Lemma} 
 \theoremstyle{plain}    
 \newtheorem{cor}[thm]{Corollary} 
 \theoremstyle{remark}
 \newtheorem{rem}[thm]{Remark}
  \theoremstyle{remark}
   \theoremstyle{remark}
 \newtheorem{prop}[thm]{Proposition}
 \theoremstyle{definition}
 \newtheorem{defn}[thm]{Definition}
 \theoremstyle{definition}
  \newtheorem{example}[thm]{Example}
    \theoremstyle{definition}
  \newtheorem{lis}[thm]{List}
  \numberwithin{thm}{part}
\newcommand{\hh}{{\mathbf h}}
\definecolor{dgreen}{rgb}{0,0.4,0}
\newcommand{\aaa}{\mathfrak{a}}
\renewcommand{\tt}{\mathfrak{t}}
\newcommand{\N}{\mathbb{N}}
\newcommand{\PP}{\mathbb{P}}
\newcommand{\A}{\mathcal{A}}
\newcommand{\R}{\mathbb{R}}
\newcommand{\Z}{\mathbb{Z}}
\newcommand{\Q}{\mathbb{Q}}
\newcommand{\C}{\mathbb{C}}
\newcommand{\bpm}{\begin{pmatrix}}
\newcommand{\epm}{\end{pmatrix}}
\newcommand{\bi}{\begin{itemize}}
\newcommand{\ei}{\end{itemize}}
\newcommand{\een}{\end{enumerate}}
\renewcommand{\P}{\mathcal{P}}
\renewcommand{\a}{\alpha}
\renewcommand{\aa}{\overline{\alpha}}
\newcommand{\w}{\omega}
\newcommand{\la}{\langle}
\newcommand{\ra}{\rangle}
\newcommand{\G}{\Gamma}
\newcommand{\Hom}{\operatorname{Hom}}
\renewcommand{\L}{\Lambda}
\renewcommand{\S}{\Sigma}
\newcommand{\V}{\mathcal{V}}
\renewcommand{\d}{\delta}
\newcommand{\SL}{\operatorname{SL}}
\newcommand{\GL}{\operatorname{GL}}
\newcommand{\SP}{\operatorname{Sp}}
\newcommand{\SU}{\operatorname{SU}}
\newcommand{\btr}{\mathlarger{\mathlarger{\blacktriangleright}}}
\newcommand{\btl}{\mathlarger{\mathlarger{\blacktriangleleft}}}
\newcommand{\bt}{\mathlarger{\mathlarger{\blacktriangle}}}
\newcommand{\btd}{\mathlarger{\mathlarger{\blacktriangledown}}}
\def\sm#1{S\setminus\{\a_{#1}\}}
\def\ssm#1#2{S\setminus\{\a_{#1},\a_{#2}\}}
\def\Sp#1{\langle -\w, \aa_{#1}^\vee\ra}
\newcommand{\Ssm}{S\setminus}
\newcommand{\sll}{\mathfrak{sl}}
\newcommand{\gll}{\mathfrak{gl}}
\newcommand{\soo}{\mathfrak{so}}
\renewcommand{\sp}{\mathfrak{sp}}
\renewcommand{\gg}{\mathfrak{g}}
\renewcommand{\hh}{\mathfrak{h}}
\begin{document}
\selectlanguage{english}
\title[Moment Polytopes of rank one]{Momentum polytopes of rank one for multiplicity free quasi-Hamiltonian manifolds}
\author{Kay Paulus}
\address{Department of Mathematics, Friedrich-Alexander-University Erlangen-Nuremberg, Cauerstr. 11, 91058 Erlangen}
\email{paulus@math.fau.de}
\date{\today}
\begin{abstract}
 We classify all momentum polytopes of rank one for multiplicity free quasi-Hamiltonian $K$-manifolds for simple and simply connected Lie groups $K$ by using the methods developed in \cite{Kno14}. This leads to lots of new concrete examples of multiplicity free quasi-Hamiltonian manifolds or equivalently, Hamiltonian loop group actions.\\
\end{abstract}

\maketitle

\selectlanguage{english}

\part{Introduction}
Let $K$ be a simply connected compact Lie group with Lie algebra $\mathfrak{k}$ and complexification $G$,   $T_\R$ a maximal torus for $K$ and $T$ its complexification, let $\mathfrak{t}$ be the Cartan of $\mathfrak{k}$.

A quasi-Hamiltonian manifold $M$ is a concept introduced by Alekseev, Malkin and Meinrenken in \cite{AMM98} as a setting equivalent to Hamiltonian Loop group actions.\\

We choose a scalar product on $\mathfrak{k}$, let $\tau$ be an automorphism of $K$ leaving this scalar product on $\mathfrak{k}$ invariant. There are two canonical 1-forms $\theta$ and $\bar{\theta}$ on $\mathfrak{k}$ defined by $\theta(k\xi)=\xi=\bar \theta(\xi k)$ with $k\in K$ and $\xi \in \mathfrak{k}$. Then we take $\Theta_\tau=\frac 12 (\bar \theta + ^{\tau^{-1}}\theta)$ with $\Theta_\tau(k\xi)=\frac 12 (\operatorname{Ad}(k) \xi+^{\tau^{-1}}\xi)$ where $^\tau k$ denotes the operation of $\tau$ on $k\in K$. We use the scalar product on $\mathfrak{k}$ to define the canonical bi-invariant closed 3-form
\[
 \chi:=\frac{1}{12}\la \theta,[\theta,\theta]\ra_{\mathfrak{k}}=\frac{1}{12}\la \bar\theta,[\bar\theta,\bar\theta]\ra_{\mathfrak{k}}
\]
We write $K\tau$ for the twisted $K$-action, i.e. $x\mapsto kx\tau(k)^{-1}$ to avoid confusions with the adjoint action of $K$.

\begin{defn}[\cite{Kno14}, Definition 2.3.]
A quasi-Hamiltonian $K\tau$-manifold is a smooth manifold $M$ equipped with a $K$-action, a 2-form $w$, and a smooth map $m:M\to K\tau$, called the (group valued) moment map which have the following properties:
 \begin{enumerate}[label=\alph*)]
  \item $m$ is $K$-equivariant
  \item the form $w$ is $K$-invariant and satisfies $dw=-m^* \chi$
  \item $w(\xi x,\eta)=\la \xi, m^*\Theta_\tau (\eta)\ra_{\mathfrak{k}}$ for all $\xi \in \mathfrak{k}, \eta\in T_x M$
  \item $\operatorname{ker}w_x=\{\xi x \in T_x M \mid \xi \in  \mathfrak{k}$ with $^{m(x)\tau}\xi+\xi=0\}$
 \end{enumerate}
\end{defn}

We call a quasi-Hamiltonian manifold (from now on: q-Hamiltonian) {\bf multiplicity free} if its symplectic reductions are discrete, i.e. points. Being multiplicity free implies that the image of the moment map $\P_M$ is convex and locally polytopal.\\

For this paper, a  ``quasi-Hamiltonian $K\tau$-manifold'' is always  multiplicity free and compact. This is the setting of \cite{Kno14}. Knop showed that quasi-Hamiltonian manifolds are uniquely determined by their moment polytope $\P$ and principal isotropy group $L_S$ which can be encoded in some lattice $\L_S$. Knop also answers which pairs $(\P,\L_S)$ can arise:

Let $Z$ be a smooth affine spherical variety. We denote by $\aaa_\P\subseteq \mathfrak{t}$ the affine space spanned by $\P$. Any $a\in \P$ gives rise to an isotropy group $K(a)$ with respect to the twisted action. We denote by $C_Z$ the convex cone generated by the weight monoid of $Z$ and by $C_a\P$ the tangent cone of $\P$ in $a\in \A$.

\begin{defn}\label{sphpair}(cf. \cite{Kno14}, Definition 6.6)
Let $K$ be a simply connected compact Lie group with automorphism $\tau$ and fundamental alcove $\A$. Let $\P\subseteq \A$ be a locally convex subset and $\Lambda_S \subseteq \bar\aaa_\P$ a lattice. Then $(\P,\Lambda_S)$ is called {\bf spherical in $a\in \P$}  if
\begin{enumerate}
\item $\P$ is polyhedral in $a$, i.e.
\[
P\cap U = (a+C_a\P)\cap U
\]
for a neighborhood $U$ of $a$ in $\A$, and
\item there is a smooth affine spherical $K(a)_\C$-variety $Z$ with weight monoid $\G_Z$ such that 
\[
C_a\P\cap \Lambda_S = \G_Z
\]
\end{enumerate}
The pair $(\P,\Lambda_S)$ is called a {\bf spherical pair} if it is spherical for all $a\in\P$.\\
The variety $Z$ is called a {\bf local model} in $a$ if it fullfills the conditions above in $a$. By abuse of notation, we shall also call the corresponding weight monoid (cf. \cref{sphericaldata}) and the set of its generators a local model, as a smooth affine spherical variety is uniquely determined by its weight monoid (cf. \cite{Los06}).
\end{defn}

\begin{thm}(\cite{Kno14}, thm. 6.7)
 Let $K$ be a a simply connected compact Lie group with twist $\tau$. Then the map $M\to (\P_M, \Lambda_M)$ furnishes a bijection between
 \begin{itemize}
  \item isomorphism classes of convex multiplicity free quasi-Hamiltonian $K\tau$-manifolds and
  \item spherical pairs $(\P_M, \Lambda_M)$.
 \end{itemize}
Under this correspondence, $M$ is compact if and only if $\P_M$ is closed in $\A$.
\end{thm}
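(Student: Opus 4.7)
The plan is to proceed in three main steps, following the local-to-global philosophy of the theory of spherical varieties: first, well-definedness of the assignment $M \mapsto (\P_M, \L_M)$; second, injectivity (uniqueness); third, surjectivity (existence).

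For well-definedness, given a multiplicity free q-Hamiltonian $K\tau$-manifold $M$, I would set $\P_M := m(M) \cap \A$, using that every twisted conjugacy class meets the fundamental alcove in a unique point, and I would take $\L_M$ to be the weight lattice associated to the principal isotropy group $L_S$. Convexity and local polytopality of $\P_M$ have already been recorded. To verify sphericity at an arbitrary $a \in \P_M$, the key input is a local normal form (slice theorem) for q-Hamiltonian manifolds in a neighborhood of the fiber $m^{-1}(a)$: multiplicity freeness forces this fiber to be a single $K(a)$-orbit, and a $K$-invariant saturated neighborhood is modeled on a Hamiltonian $K(a)$-space $Y_a$ which is itself multiplicity free. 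The Hamiltonian side of the theory then identifies $Y_a$ with a smooth affine spherical $K(a)_\C$-variety $Z_a$ whose weight monoid realises precisely the intersection $C_a \P_M \cap \L_M$, as required by \cref{sphpair}.

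For injectivity, suppose $M$ and $M'$ produce the same pair $(\P, \L)$. At every $a \in \P$ the local models $Z_a, Z'_a$ have coincident weight monoids and are therefore $K(a)_\C$-equivariantly isomorphic by Losev's uniqueness theorem \cite{Los06}. Transporting this isomorphism through the local normal form yields a $K$-equivariant symplectomorphism of saturated neighborhoods of $m^{-1}(a)$ and $m'^{-1}(a)$ intertwining the moment maps. A standard patching argument, combined with the uniqueness of the local models on overlaps, globalises these local isomorphisms to a $K$-equivariant isomorphism $M \iso M'$ of q-Hamiltonian manifolds.

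For surjectivity, starting from a spherical pair $(\P, \L)$ I would produce $M$ by gluing. At each $a \in \P$ the sphericity assumption furnishes a smooth affine spherical variety $Z_a$; inverting the slice construction converts $Z_a$ into a local q-Hamiltonian chart around a prospective fiber over $a$. These charts can be glued along their overlaps, which correspond to faces of $\P$ where the two tangent cones agree: the relevant identifications of local models are again forced by Losev's theorem. The resulting manifold $M$ is multiplicity free by construction, and its associated pair recovers $(\P, \L)$. Finally, the compactness statement follows from the properness of the quotient map $K\tau \to K\tau / K \simeq \A$: $M$ is compact if and only if $m(M) \subseteq K\tau$ is compact, which is equivalent to $\P_M$ being closed inside the (open-faced) alcove $\A$.

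The main obstacle is surjectivity. Assembling the local q-Hamiltonian charts into a single global object requires coherent gluing data across the whole polytope, and one must verify that the global 2-form $w$ satisfies $dw = -m^\ast \chi$ and the kernel condition across chart boundaries, not merely on each piece separately; this is a cocycle-type bookkeeping problem. Once this is settled, the remainder of the argument essentially reduces to the classification of smooth affine spherical varieties by their weight monoids together with the q-Hamiltonian slice theorem.
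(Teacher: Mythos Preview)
The paper does not prove this theorem at all: it is quoted verbatim as Theorem~6.7 of \cite{Kno14} and used as a black box, with no argument supplied. Your outline is a plausible sketch of how Knop's original proof proceeds (local normal form plus Losev's uniqueness theorem for the injectivity, gluing of local models for the surjectivity), but there is nothing in the present paper to compare it against.
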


We also recall that every Hamiltonian manifold carries a quasi-Hamiltonian structure.
\begin{defn}
We call a q-Hamiltonian {\bf genuine} if the moment polytope touches every wall of the fundamental alcove at least once. That means that genuine q-Hamiltonian manifolds are exactly those that do not carry a Hamiltonian structure.
\end{defn}

The aim of this paper is to classify all quasi-Hamiltonian manifolds of rank one for $K$ simple, genuine or not.

This paper discusses a part of my nearly-finished doctoral project conducted under the supervision of Friedrich Knop at FAU Erlangen-N\"urnberg. Therefore it clearly has substantial, partially word-by-word, overlap with my doctoral thesis \cite{Pau}.

{\bf Acknowledgment:} \\
I would like to thank Friedrich Knop, Guido Pezzini, Bart Van Steirteghem and Wolfgang Ruppert for countless explanations, discussions and remarks.

\part{Generalities}
\section{Affine root systems}
First we shall recall some well known facts about affine root systems that are needed for the rest of the paper. We follow the introduction in \cite{Kno14} that is mainly based on \cite{Mac72} and \cite{Mac03}.\\

Let $\overline{\aaa}$ be a Euclidean vector space  with an associated affine space $\aaa$. We denote by $A(\aaa)$ the set of affine linear functions on $\aaa$, the gradient of $\a\in A(\aaa)$ is denoted by $\aa \in \overline{\aaa}$, and it has the property
\begin{equation}\label{afflin}
 \a(x+t)=\a(x)+\la \aa, t\ra, ~ x\in \aaa, t\in \overline{\aaa}
\end{equation}

Let $O(\bar\aaa)$ be the orthogonal group of the vector space $\overline\aaa$ and $M(\aaa)$ the isometries of $\aaa$, and let $M(\aaa)\to O(\overline{\aaa}), w\mapsto \overline{w}$ be the natural projection between these two sets. 

A motion $s\in M(\aaa)$ is called a reflection if its fixed point set is an affine hyperplane $H$. We can write this reflection $s_\a(x)=x-\a(x)\aa^\vee$ with the usual convention $\aa^\vee=\frac{2\aa}{||\aa||^2}$. Its action on an affine linear function $\beta\in A(\aaa)$ is:
\[
 s_\a(\beta)=\beta-\la\overline{\beta}, \aa^\vee\ra \a
\]

\begin{defn} 
 An affine root system on an affine space $\aaa$ is a subset $\Phi\subset A(\aaa)$ such that:
 \begin{enumerate}
  \item $\R1\cap \Phi=\emptyset$
  \item $s_\a(\Phi)=\Phi$ for all $\a\in\Phi$
  \item $\la \bar \beta, \bar \alpha ^\vee \ra \in \Z$ for all $\a,\beta \in \Phi$
  \item the Weyl Group $W_\Phi:=\la s_\a,\a\in\Phi\ra\subset M(\aaa)$  is an Euclidean reflection group, which means in particular that $W$ acts properly discontinuously.
  \item The affine root system $\Phi$ is called reduced if $\R\a\cap\Phi=\{+\a, -\a\}$ for all $\a\in\Phi$
 \end{enumerate}
 \end{defn}

Note that  we do not ask $\Phi$ to generate the affine space.  Also, all finite (classical) root systems are affine in this definition.

\begin{defn}
The set of non-decomposable roots of $\Phi$ (with respect to one fixed alcove) is called the set of {\bf simple roots}. We denote the set of all simple roots by $S$.
A root is called positive if it is a linear combination of simple roots with non-negative coefficients.
\end{defn}

We shall name the affine root systems and order the simple roots just as in Tables Aff 1 to Aff 3 in  \cite{Kac90} resp. \cite{Bou81}.

\begin{defn}[\cite{Kno14}, 3.3] 
 Let $\Phi\subset A(\aaa)$ be an affine root system.
\begin{enumerate}
 \item A {\bf weight lattice} for $\Phi$ is a lattice $\Lambda \subseteq A(\aaa)$ with $\bar\Phi\subset \Lambda$ and $\bar\Phi^\vee \subseteq \Lambda^\vee$ where $\Lambda^\vee = \{t \in \bar\aaa \mid \la t, \Lambda \ra \subseteq  \Z\}$ is the dual lattice for $\Phi$.
 \item An {\bf integral root system} is a pair $(\Phi, \Lambda)$ where $\Phi\subset A(\aaa)$ is an affine root system and $\Lambda \subseteq \bar\aaa$ is a weight lattice for $\Phi$.
\end{enumerate}
\end{defn}

The connected components of the complement of the union of all reflection hyperplanes in $\aaa$ are called the {\bf chambers} of $\Phi$ (or $W$). The closure $\bar{\mathcal{A}}$ of a single chamber is called an {\bf alcove}.\\

There is a well known theory of Euclidean reflection groups stating that $W$ acts simply transitively on the set of alcoves, that each alcove is a fundamental domain for the action of $W$ and that the group $W$ is generated by the finitely many reflections about the faces of codimension one of any alcove.

\section{Basic combinatorial invariants of spherical varieties}\label{sphericaldata}
In this section, we recall some basic facts, notations and invariants from the (combinatorial) theory of spherical varieties. We particularly focus on smooth affine spherical varieties of rank one and try to avoid digging into the depth of the general theory as much as possible to keep the paper accessible to a broad range of readers. For a more detailed introduction, we refer to \cite{Lun01}. The author also recommends \cite{Pez10} as a general introduction to spherical varieties and \cite{BL10} as an introduction to the combinatorial theory of spherical varieties containing numerous instructive examples. 

Let $B$ be a Borel subgroup of $G$. Remember that an irreducible  $G$-variety is called {\bf spherical} if it is normal and has an open $B$-orbit. A closed subgroup $H$ is called spherical if $G/H$ is spherical.

The {\bf weight monoid} $\G(Z)$ of a complex affine algebraic $G$-variety $Z$ is the set of isomorphism classes of irreducible representations of $G$ in its coordinate ring.

\begin{defn}\label{combin}
Let $Z$ be a spherical $G$-variety with open orbit $G/H$. The basic invariants the theory of spherical varieties uses about $Z$ are:
\begin{enumerate}
 \item The {\bf (weight) lattice} of $Z$, called $\Lambda(Z)$, is the subgroup of the character group of the Borel $B$ (that can be identified with the character group of the maximal torus $T$) consisting of the $B$-weights of $B$-eigenvectors in the field of rational functions $\C(Z)$.
 
\item The rank of $\Lambda(Z)$ is called the {\bf rank} of $Z$.

\item Let $\G$ be a set of dominant weights of $G$. The set of {\bf simple roots orthogonal to $\G$} is
\[
 S^p(\G):=\{\a\in S: \la \lambda, \alpha^\vee \ra =0 \forall \lambda \in \G\}
\]

\item By \cite{Bri90}, the set of $G$-invariant $\Q$-valued discrete valuations of $\C(Z)$, called $\V(Z)$, is a co-simplicial cone. The set of {\bf spherical roots} $\S(Z)$ of $Z$ is the minimal set of primitive elements of $\Lambda(Z)$ such that 
\[
 \V(Z)=\{\eta\in\Hom_\Z(\Lambda(Z),\Q)\mid\la \eta, \sigma \ra \le 0 \forall \sigma \in \S(Z)\}
\]
\end{enumerate}
\end{defn}
Note that these invariants only depend on the open $G$-orbit $G/H$ of $Z$.\\


Let us recall the list of spherical roots of smooth affine spherical varieties that is a subset of Akhiezer's classification of smooth spherical varieties of rank one in \cite{Akh83}. They are known to be exactly the weights of homogeneous smooth affine spherical varieties of rank one.\\
We also give the so-called Luna Diagrams of the spherical roots, see e.g. \cite{BL10} for a closer explanation of this diagram notation. Note that the factors in [] apply to the whole sums and are optional, meaning that the spherical roots with and without this factor exist.

\begin{lis}\label{lunad}{Diagrams of spherical roots}
 \begin{center}
  \begin{longtable}{p{6cm}p{6cm}}
  Spherical root & Diagram\\\endhead
  \hline
  $\a_1$ & $\begin{picture}(-1800,2400)(0,0) \put(0,0){\usebox{\aone}}\end{picture}$\\
  $2\a_1$& $\begin{picture}(-1800,2400)(0,0) \put(0,0){\usebox{\aprime}}\end{picture}$\\
  $[\frac 12]\a+\a'$ & $\begin{picture}(-1800,2400)(0,-900) \put(0,0){\usebox{\vertex}}
						      \put (2700,0){\usebox \vertex}
						      \multiput(0,0)(2700,0){2}{\usebox\wcircle}
						      \multiput(0,-300)(2700,0){2}{\line(0,-1){600}}
						      \put(0,-900){\line(1,0){2700}}
						      \put(900,-1800){\tiny $[\nicefrac12]$}
						      \end{picture}$\\
 $\a_1+\dots+\a_r$ & $\begin{picture}(-1800,2400)(0,0) \put(0,0){\usebox{\mediumam}}\end{picture}$\\
 $[\frac12]\a_1+2\a_2+\a_3$ & $\begin{picture}(-1800,2400)(0,0) \put(0,0){\usebox{\dthree}}
 \put (1200,900){\tiny$[\nicefrac12]$}\end{picture}$\\
 $\a_1+\dots+\a_r$ & $\begin{picture}(-1800,2400)(0,0) \put(0,0){\usebox{\shortbm}}\end{picture}$\\
 $2\a_1+\dots+2\a_r$& $\begin{picture}(-1800,2400)(0,0) \put(0,0){\usebox{\shortbprimem}}\end{picture}$\\
 $[\frac 12]\a_1+2\a_2+3\a_3$ & $\begin{picture}(-1800,2400)(0,0) \put(0,0){\usebox{\bthirdthree}}\put (2700,900){\tiny$[\nicefrac12]$}\end{picture}$\\
 $\a_1+2\a_2+\dots+2\a_{n-1}+\a_r$ & $\begin{picture}(-1800,2400)(0,0) \put(0,0){\usebox{\shortcm}}\end{picture}$\\
 $[\frac12]2\a_1+\dots+2\a_{r-2}+\a_{r-1}+\a_r$ & $\begin{picture}(-1800,2400)(0,0) \put(0,0){\usebox{\shortdm}}\put (-600,900){\tiny$[\nicefrac12]$}\end{picture}$\\
 $\a_1+2\a_2+3\a_3+2\a_4$ & $\begin{picture}(-1800,2400)(0,0)\put(0,0){\usebox{\ffour}}\end{picture}$\\
 $[2]2\a_1+\a_2$ & $\begin{picture}(-1800,2400)(0,0) \put(0,0){\usebox{\gtwo}}\put(-300,600){\tiny [2]}\end{picture}$\\

  \end{longtable}
 \end{center}
\end{lis}
 
\begin{defn}
 We will use the abbreviation $\aa_{i,j}$ for the sum $\aa_i+\aa_{i+1}+\dots+\aa_j$.
\end{defn}

\section{Local models and twisted conjugacy classes}

Every twisted q-Hamiltonian manifold locally looks like a Hamiltonian one (cf. \cite{Kno14}, part 5) and Sjamaar \cite{Sja98} showed that smooth affine spherical varieties are local models for (multiplicity-free) Hamiltonian manifolds. Their general structure is:

\begin{thm}[cf. \cite{KVS05}, cor. 2.2]
 Let $Z$ be a smooth affine spherical $G$-variety. Then $Z\cong G\times ^H V$ where $H$ is a reductive subgroup of $G$ such that $G/H$ is spherical and $V$ is a spherical $H$-module.
\end{thm}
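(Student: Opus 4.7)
The plan is to produce the decomposition $Z \cong G \times^H V$ locally near a closed $G$-orbit via Luna's \'etale slice theorem, and then to globalize it using the rigidity result for smooth affine spherical varieties already cited in the paper as \cite{Los06}.

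First I would pick a closed $G$-orbit $\OO \subseteq Z$, which exists because $Z$ is affine (every affine $G$-variety has closed orbits, as they correspond to closed points in the categorical quotient). Fix $z_0 \in \OO$ and set $H := G_{z_0}$, so $\OO \cong G/H$. By Matsushima's criterion, the stabilizer of a point with closed orbit in an affine $G$-variety is reductive, so $H$ is reductive. Luna's \'etale slice theorem then produces an $H$-stable locally closed affine subvariety $S \subseteq Z$ containing $z_0$ such that the natural $G$-morphism $G \times^H S \to Z$ is strongly \'etale onto a $G$-saturated open neighborhood of $\OO$. Because $Z$ is smooth at $z_0$ and $H$ is reductive, $S$ may be chosen $H$-equivariantly isomorphic to the normal representation $V := T_{z_0}Z / T_{z_0}\OO$, which is automatically an $H$-module.

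Next I would verify the two sphericity assertions, both inherited from sphericity of $Z$. Let $B \subseteq G$ be a Borel subgroup whose orbit in $Z$ is open. Its intersection with the closed orbit $G/H$ must contain a full $B$-orbit of maximal dimension in $G/H$, so $B$ has an open orbit in $G/H$ and $G/H$ is spherical. Passing to the linear slice, the fiber $V \subseteq G \times^H V$ over the base point meets the open $B$-orbit, and the stabilizer of that fiber in $B$ is a Borel of $H$ (namely a Borel of $B \cap H$ after suitable choice); it therefore acts on $V$ with an open orbit, showing that $V$ is a spherical $H$-module.

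Finally, the \'etale map $G \times^H V \to Z$ must be promoted to a genuine isomorphism. Both sides are smooth affine spherical $G$-varieties; since the map is \'etale, $G$-equivariant, and its image contains the closed orbit $\OO$, the associated weight monoids of $G \times^H V$ and $Z$ coincide. By Losev's theorem \cite{Los06}, a smooth affine spherical $G$-variety is determined up to equivariant isomorphism by its weight monoid, forcing $Z \cong G \times^H V$. The main obstacle is precisely this globalization step: Luna's slice theorem alone gives only an \'etale local model near a single closed orbit, so one must rule out additional closed orbits or further global twisting. Losev's rigidity closes this gap; a subsidiary technical point, standard but worth spelling out, is that smoothness of $Z$ together with reductivity of $H$ really allows a linear choice of slice, so that $V$ indeed acquires the structure of an $H$-module rather than merely an $H$-variety with fixed point.
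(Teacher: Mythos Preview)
The paper does not supply its own proof of this statement: it is quoted from \cite{KVS05}, Corollary~2.2, with no argument given. So there is nothing in the paper to compare against beyond the reference itself, and the relevant benchmark is the short direct argument in \cite{KVS05}.

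Your opening moves are fine (closed orbit, Matsushima for reductivity of $H$, Luna's slice), but the globalization step has a genuine gap. Luna's \'etale slice theorem does not hand you a $G$-morphism $G\times^H V\to Z$; it gives $G\times^H S\to Z$ for a slice $S$ which, in the smooth case, is $H$-isomorphic only to an \emph{open subset} of the normal representation $V$. So the map along which you want to compare weight monoids does not exist as stated. Even granting such a map, a $G$-equivariant \'etale morphism yields at best an inclusion of weight monoids via pullback of $B$-eigenfunctions, not an equality; your sentence ``since the map is \'etale \dots\ the associated weight monoids coincide'' is therefore unjustified, and the hypothesis needed to invoke \cite{Los06} is not established. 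Appealing to Losev here is in any case disproportionate: his uniqueness theorem is far deeper than the structural fact you are proving.

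The missing idea is the one-line observation that makes the argument in \cite{KVS05} work directly: since $Z$ is spherical, the trivial representation occurs exactly once in $\C[Z]$, so $\C[Z]^G=\C$ and $Z/\!/G$ is a single point. Hence $Z$ has a \emph{unique} closed $G$-orbit, and the cartesian square in the ``strongly \'etale'' formulation of Luna's theorem collapses to an isomorphism $G\times^H S\cong Z$. Running the same observation once more for the $H$-variety $S$ (smooth affine, $S/\!/H$ a point, with $H$-fixed point) identifies $S$ with the linear slice $V$. No rigidity theorem is needed.
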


A smooth affine spherical variety corresponds to a triple $(G,H,V)$. Hence a complete classification of smooth affine spherical varieties would require giving all these triples.  We refer to \cite{KVS05}, Examples 2.3 - 2.6 as an illustration of which problems appear. The actual objects of this classification are:

\begin{defn}[\cite{KVS05}, Def. 2.7.]\leavevmode\\
\begin{enumerate}
\item Let $\hh\subseteq \gg$ be semisimple Lie algebras and let $V$ be a representation of $\hh$. For $\mathfrak{s}$, a Cartan subalgebra of the centralizer $\mathfrak{c_g(h)}$ of $\hh$, put $\bar\hh:=\hh \oplus \mathfrak{s}$, a maximal central extension of $\hh$ in $\gg$. Let $\mathfrak{z}$ be a Cartan subalgebra of $\gll(V)^\hh$ (the centralizer of $\hh$ in $\gll(V)$). We call $(\gg,hh,V)$ a {\bf spherical triple} if there exists a Borel subalgebra $\mathfrak{b}$ of $\gg$ and a vector $v\in V$ such that:
\begin{enumerate}
\item $\mathfrak{b}+\bar \hh =\gg$ and 
\item $[(\mathfrak{b}\cap \bar \hh + \mathfrak{z}]v=V$ where $\mathfrak{s}$ acts as via any homomorphism $\mathfrak{s}\to \mathfrak{z}$  on $V$.
\end{enumerate}
\item Two triples $(\gg_i, \hh_i, \V_i), i=1,2$ are {\em isomorphic} if there exist linear bijections $\a:\gg_1\to \gg_2$ and $\beta:V_1\to V_2$ such that:
\begin{enumerate}
\item $\a$ is a Lie algebra homomorphism
\item $\a(\hh_1)=\hh_2$
\item $\b(\xi v)=\alpha(\xi)\beta(v)$ for all $\xi \in \hh_1$ and $b\in V_1$.
\end{enumerate}
\item Triples of the form $(\gg_1\oplus \gg_2, \hh_1\oplus \hh_2, V_1\oplus V_2)$ with $(\gg_i, \hh_i, V_i)\ne(0,0,0)$ are called {\em decomposable}.
\item Triples of the form $(\mathfrak{k},\mathfrak{k}, 0)$ and $(0,0,V)$ are said to be {\em trivial}.
\item A pair $(\gg,\hh)$ of semisimple Lie algebras is called {\em spherical} if $(\gg,\hh, 0)$ is a spherical triple.
\item A spherical triple (or pair) is {\em primitive} if it is non-trivial an indecomposable.
\end{enumerate}
\end{defn}

Let us particularly recall that a summand of the form $(\gg, \hh, 0)$ corresponds to a homogeneous variety and a factor $(\hh,\hh,V)$ corresponds to a spherical module. 

\begin{thm}[\cite{KVS05}, thm. 2.9.]
 If $G\times^H V$ is a smooth affine spherical variety then $(\gg', \hh', V)$ is a spherical triple. Moreover, it follows from the classification in \cite{KVS05} that every spherical triple arises this way.
\end{thm}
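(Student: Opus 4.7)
The plan is to establish the two directions separately, with the bulk of the work residing in the forward direction.

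For the forward direction, assume $Z = G\times^H V$ is smooth affine spherical, so that $H$ is reductive and some Borel $B\subseteq G$ acts on $Z$ with an open orbit. Pick a representative $z_0=[1_G, v_0]$ of that orbit; the goal is to show that the Borel subalgebra $\mathfrak{b}$ of $\gg$ and the vector $v_0 \in V$ realise the axioms of a spherical triple for $(\gg',\hh',V)$. Directly computing the infinitesimal action identifies $T_{z_0}Z$ with $(\gg \oplus V)/\{(-\eta,\eta v_0) : \eta \in \hh\}$, and the differential of the $B$-action at $z_0$ sends $\xi \in \mathfrak{b}$ to the class of $(\xi, 0)$. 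Openness of the $B$-orbit thus translates, upon projecting onto the two summands, into the naive pair $\mathfrak{b} + \hh = \gg$ and $\hh v_0 = V$. Upgrading these to the required form $\mathfrak{b} + \bar{\hh} = \gg$ (with $\bar{\hh} = \hh \oplus \mathfrak{s}$) and $[(\mathfrak{b}\cap\bar{\hh}) + \mathfrak{z}] v_0 = V$ is done by absorbing the central contributions into $\mathfrak{s}$ and $\mathfrak{z}$, after which sphericality of the whole triple follows.

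For the reverse direction, the claim that every spherical triple is realised by some smooth affine spherical $G\times^H V$ is not obtained by a general argument; one invokes instead the explicit classification of primitive spherical triples produced in \cite{KVS05}. Each primitive entry of that list is accompanied there by a concrete realisation as an affine spherical variety of the stated form, and the case of a general (i.e.\ decomposable) triple reduces to primitives by taking direct products of those realisations. Hence this half of the theorem is effectively tantamount to reading the classification tables.

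The main obstacle is the forward direction's delicate passage from the naive tangent conditions to the axioms written in terms of the central extension $\bar{\hh}$ and the centraliser $\mathfrak{z}$. The naive conditions use all of $\hh$ and make no reference to $\mathfrak{b}\cap\bar{\hh}$ or $\mathfrak{z}$, so one has to re-express them carefully. A clean route is to decompose $\gg = \gg' \oplus \mathfrak{z}(\gg)$ and $\hh = \hh' \oplus \mathfrak{z}(\hh)$, observe that the excess of $\hh$ over $\hh'$ lies inside $\mathfrak{c}_{\gg}(\hh)$ and is therefore absorbed into $\mathfrak{s}$, and then use the weight-lattice description of sphericality (with $\Lambda(Z)$ controlling the $B$-semiinvariant functions) to justify augmenting the $V$-action by $\mathfrak{z}$. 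Once this bookkeeping is in place, the two axioms drop straight out of the open Borel orbit condition, and both implications fit together to give the asserted bijection between smooth affine spherical varieties and spherical triples.
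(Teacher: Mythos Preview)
The paper does not give its own proof of this theorem; it is quoted verbatim from \cite{KVS05} (their Theorem~2.9) and used as a black box. So there is nothing to compare against on the paper's side, and your reverse direction --- reading off realisations from the classification tables in \cite{KVS05} --- is exactly how that reference handles it.

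Your forward direction, however, contains a genuine error. When you say that ``projecting onto the two summands'' yields the naive pair $\mathfrak{b}+\hh=\gg$ and $\hh v_0=V$, you are discarding information: surjectivity of
\[
\{(\xi,0):\xi\in\mathfrak{b}\}+\{(-\eta,\eta v_0):\eta\in\hh\}\;\longrightarrow\;\gg\oplus V
\]
is strictly stronger than surjectivity of each projection. The correct unpacking is that for every $(\gamma,w)\in\gg\oplus V$ there must exist $\eta\in\hh$ with $\eta v_0=w$ \emph{and} $\gamma+\eta\in\mathfrak{b}$. Setting $\gamma=0$ forces $\eta\in\mathfrak{b}\cap\hh$, so the condition on $V$ is actually $(\mathfrak{b}\cap\hh)v_0=V$, not $\hh v_0=V$. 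This intersection is precisely what is needed to reach axiom~(b) of the spherical-triple definition, $[(\mathfrak{b}\cap\bar\hh)+\mathfrak{z}]v=V$; starting from your weaker $\hh v_0=V$ you cannot recover it, and no amount of ``absorbing central contributions into $\mathfrak{s}$ and $\mathfrak{z}$'' will manufacture the missing $\mathfrak{b}\cap\hh$ constraint. Once you carry the intersection through correctly, the passage from $(\gg,\hh)$ to the semisimple parts $(\gg',\hh')$ with the auxiliary tori $\mathfrak{s},\mathfrak{z}$ is the bookkeeping you describe, and the argument goes through.
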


We shall add the spherical triples of the local models in \cref{ListHom} and \cref{ham1}.

The local models are closely related to the local root systems from \cite{Kno14}. We shall give a brief overview how they are constructed.

Let us fix some notation: We have a simply connected compact Lie group $K$ that operates on a smooth manifold $M$ by twisted conjugation. Let $\mathfrak{t}\subset \mathfrak{k}$ be a Cartan subalgebra and $\Phi_K$ the corresponding root system. Let us remember some {\bf facts about twisted conjugacy classes} from \cite{Kno14}:

\begin{thm}\label{levi}[cf. \cite{Kno14},thm. 4.2.] 
 Let $K$ be a simply connected compact Lie group and $\tau$ an automorphism of $K$. Then there is a $\tau$-stable maximal torus $T \subseteq K$ and an integral affine root system $(\Phi_\tau, \Lambda_\tau)$ on $\aaa=\mathfrak{t}^\tau$, the $\tau$-fixed part of $\operatorname{Lie} T$, such that the following holds:
 \begin{enumerate}[label=\alph*)]
  \item Let $\operatorname{pr}^\tau:\mathfrak{t}\to \aaa$ be the orthogonal projection. Then $\bar\Phi_\tau=\operatorname{pr}^\tau\Phi(\mathfrak{k,t})$ and $\Lambda_\tau=\operatorname{pr}^\tau\Xi(T)$. Moreover, $\Lambda_\tau$ is also the weight lattice (dual of the coroot lattice) of $\bar\Phi_\tau$.
  \item For any alcove $\A\subseteq \aaa$ of $\Phi_\tau$ the composition
  \[
   c:\A\hookrightarrow\aaa \overset{exp}{\to} K \to K\tau/K
  \]
is a homeomorphism.
 \item For $a\in\A$ let $u:=\exp a\in K$. Then the twisted centralizer
 \[
  K(a):=K_u=\{ k \in K: ku^\tau k^{-1}=u\}=K^{\bar\tau} \qquad \text{ where } \bar\tau:=\operatorname{Ad}(u)\circ\tau
 \]
is a connected subgroup of $K$ with maximal torus $S:=\exp \aaa=(T^\tau)^0$. Its root datum is $(\bar\Phi_\tau(a), \Lambda_\tau)$ where $\Phi_\tau(a):=\{\a\in\Phi:\alpha(a)=0\}$.
  
 \end{enumerate}

\end{thm}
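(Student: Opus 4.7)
My plan is to adapt the classical theory of conjugacy classes in compact simply connected Lie groups (the case $\tau = \mathrm{id}$) to the twisted setting, treating the three parts in sequence; the outer scaffolding is the theory of twisted affine Weyl groups. For part (a), the first step is to exhibit a $\tau$-stable maximal torus $T \subseteq K$. Since $\tau$ permutes the (compact, connected) variety of maximal tori of $K$ and finite-order automorphisms always admit fixed points on such a space, a direct averaging or Steinberg-type fixed point argument produces such a $T$. Once $T$ is chosen, I set $\aaa = \mathfrak{t}^\tau$ and define $\mathrm{pr}^\tau$ as averaging over the finite $\langle\tau\rangle$-orbit. The affine root system $\Phi_\tau$ is then built from the $\tau$-orbits in $\Phi(\mathfrak{k},\mathfrak{t})$: each orbit $\mathcal{O}$ contributes affine roots with common gradient $\mathrm{pr}^\tau\alpha$ ($\alpha\in\mathcal{O}$), with constant terms determined by the twisting cocycle. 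The claims $\bar\Phi_\tau = \mathrm{pr}^\tau \Phi(\mathfrak{k},\mathfrak{t})$ and $\Lambda_\tau = \mathrm{pr}^\tau\Xi(T)$ then follow from the definitions; the fact that $\Lambda_\tau$ equals the weight lattice of $\bar\Phi_\tau$ uses simple connectedness of $K$, which ensures that $\Xi(T)$ is the full weight lattice.

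Part (b) is the heart of the theorem. I want to show that the composition $c:\A\hookrightarrow\aaa\xrightarrow{\exp}K\to K\tau/K$ is a homeomorphism. Surjectivity is a twisted analog of the classical statement that every element of $K$ is conjugate to an element of $T$: every $k\tau \in K\tau$ is twisted-conjugate to some $\exp(a)\tau$ with $a\in\aaa$. This can be proved either by a Mohrdieck-style argument or directly by a compactness-and-averaging argument applied to the orbit of $k\tau$ under the twisted $K$-action. For injectivity modulo $W_\tau$ one shows that two elements of $\exp(\aaa)\tau$ are twisted-conjugate exactly when their parameters lie in the same $W_\tau$-orbit; the alcove $\A$ is then a fundamental domain by the general theory of Euclidean reflection groups recalled earlier in the paper. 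Continuity of $c$ is immediate, and continuity of its inverse follows from compactness of $\A$ and the Hausdorffness of $K\tau/K$.

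Finally, for part (c), observe that $K(a) = K^{\bar\tau}$ with $\bar\tau = \mathrm{Ad}(u)\circ\tau$. Its connectedness is a consequence of Steinberg's theorem applied to the simply connected group $K$. Since $\mathrm{Ad}(u)$ acts trivially on $T$, the identity component $S = (T^\tau)^0$ is a maximal torus of $K(a)$. The root datum is then read off from $\Phi_\tau$: the root subgroups of $K$ surviving in $K(a)$ correspond precisely to those $\alpha\in\Phi_\tau$ with $\alpha(a)=0$, which yields $\bar\Phi_\tau(a)$ as the finite root system and $\Lambda_\tau$ (inherited from (a)) as the weight lattice. The main obstacle throughout is part (b), specifically the twisted surjectivity onto $\A$ and the identification of $W_\tau$-orbits with twisted conjugacy classes; both rely crucially on simple connectedness of $K$, without which the fibers of $c$ could fail to be connected and the affine Weyl group description would acquire an unwanted torsion correction.
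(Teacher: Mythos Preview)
The paper does not contain a proof of this theorem at all: it is stated with the attribution ``[cf.\ \cite{Kno14}, thm.\ 4.2.]'' and simply recalled as background from Knop's paper, so there is no argument in the present text against which to compare your proposal. Your outline is a reasonable sketch of the standard approach to twisted conjugacy classes in simply connected compact groups (essentially the line taken in \cite{MW04} and \cite{Kno14}), but since the paper defers entirely to the cited reference, the appropriate action here is to cite \cite{Kno14} rather than reprove the result.
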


\begin{defn}
We denote $S(a)=\{\a\in S: \a(a)=0\}$ for $a\in \bar\A$
\end{defn}

The alcove and the momentum image of a q-Hamiltonian manifold $M$ have the following relation: As $\bar\A$ is a fundamental domain for the Weyl group $W^\tau$, the map $\bar\A \to \mathfrak{t}/W^\tau$ is a homeomorphism. Having a moment map $m:M \to K$, we can introduce the invariant moment map $m_+:M\to\bar\A$ with the property that the it makes the following diagram commute:
\[
\begin{xy}
  \xymatrix{
      M \ar[r]^m \ar[d]_{m_+}    &   K \ar@{->>}[d]  \\
      \bar\A \ar[r]_{\text{bij}}             &   K/_\tau K   
  }
\end{xy}
\]

The bijection between the alcove and the set of twisted conjugacy classes was already known before, cf. \cite{MW04}. The image of $m_+$, called $\P_M=m_+(M)\subseteq \bar\A$ determines the actual image of $m$ via  $m(M)=K\cdot \exp(\P_M)$. It follows that $\P_M$ is a convex polytope lying in $ \bar\A$ and all fibers of $m_+$ (and $m$) are connected (\cite{Kno14}, 4.4).

Another important definition is the one of a local root system: Here, we use $\Phi_X:=\{\a\in\Phi:\a(X)=0\}$ for $X\in\P$.
\begin{defn}[\cite{Kno14}, definition 7.2.] 
A local system of roots (or local root system) $\Phi(*)$  on $\P$ is a family $(\Phi(X))_{X\in\P}$ of root systems on $\aaa$ such that for each $X\in\P$:
\begin{enumerate}
 \item $\Phi_Y=\Phi(X)_Y$ for all $Y$ in a sufficiently small neighborhood of $X$ in $\P$.
 \item Every root $\a\in\Phi(X)$ is either non-negative or non-positive on $\P$.
\end{enumerate}
An {\bf integral local root system on $\P$} is a pair $(\Phi(*), \Lambda)$ such that $(\Phi(x),\L)$ is an integral root system for every $x\in\P$.
\end{defn}
\begin{rem}\leavevmode\\
 We observe that setting $X=Y$ in the definition leads to $\Phi(X)=\{\a\in \Phi: \a(X)=0\}$.
\end{rem}

\part{Genuine Quasi-Hamiltonian Manifolds of Rank one}

This part is devoted to the classification of all compact genuine quasi-Hamiltonian $K$-manifolds for a simple and simply connected Lie group $K$ such that the corresponding spherical pair $(\P,\L_S)$ (ref. \cref{sphpair}) has the following properties:
\begin{itemize}
\item The moment polytope $\P$ is a line segment $[X_1X_2], X_i\in \bar\A \setminus \A$ that touches every wall of the fundamental alcove $\A$.
\item The lattice $\L_S$ is a rank-one-lattice and the generator $\w$ spans $P$ as a line.
\end{itemize}

For the rest of this part, we assume our polytopes and local models to have this property.

  \begin{prop}
Let $\N\w$ be the weight monoid of a local model for a quasi-Hamiltonian manifold $M$ of rank one in $X$. Then the moment polytope can be written as $\{X+\R\w\}\cap \A$. 
 \end{prop}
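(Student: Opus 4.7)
The strategy is to prove the set equality by establishing both inclusions, leveraging what the hypothesis $\N\w$ tells us about the local structure of $\P$ at $X$.

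First I would unpack the hypothesis. By \cref{sphpair}, the pair $(\P,\L_S)$ is spherical in $X$, meaning there is a smooth affine spherical $K(X)_\C$-variety with weight monoid $\N\w$ and
\[
C_X\P\cap \L_S = \N\w.
\]
Since $\L_S=\Z\w$, this forces the tangent cone $C_X\P$ to be the half-line $\R_{\ge 0}\w$. In particular $X$ is an \emph{endpoint} of the segment $\P$ (at an interior point the tangent cone would be the full line $\R\w$), and $\w$ is the direction pointing from $X$ into $\P$. Write the other endpoint of $\P$ as $X'$, so $\P=[X,X']\subseteq X+\R_{\ge 0}\w$.

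For the forward inclusion $\P\subseteq (X+\R\w)\cap\bar\A$: since $X\in\P$ and $\w$ spans the direction of the segment, $\P\subseteq X+\R\w$; together with $\P\subseteq\bar\A$ (by definition of the momentum polytope) this is immediate.

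For the reverse inclusion: by convexity of $\bar\A$, the set $(X+\R\w)\cap\bar\A$ is itself a closed line segment, say $[Y_1,Y_2]$, whose endpoints lie in $\partial\bar\A$. I would show $\{X,X'\}=\{Y_1,Y_2\}$. Since $X\in\bar\A\setminus\A$ lies on a wall and $\w$ points from $X$ into $\bar\A$ (otherwise $\P$ could not simultaneously lie on the ray $X+\R_{\ge 0}\w$ and inside $\bar\A$), the line immediately leaves $\bar\A$ in the $-\w$ direction, so $X=Y_1$. Similarly $X'\in\bar\A\setminus\A$ lies on the ray and on a wall. Provided the line $X+\R\w$ is not contained in any wall of $\bar\A$, any boundary point of $\bar\A$ on this line must be an exit point of the intersection; hence $X'=Y_2$. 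Concatenating gives $\P=[X,X']=[Y_1,Y_2]=(X+\R\w)\cap\bar\A$, as required.

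The main obstacle is the potential degenerate situation where the affine line $X+\R\w$ is entirely contained in some wall $W$ of $\bar\A$, because then the transversality argument identifying $X'$ with $Y_2$ fails. I would handle this case by replacing $\bar\A$ with the lower-dimensional convex face $W\cap\bar\A$: the segment $\P$ still lies in this face, its endpoints $X,X'$ still lie on the relative boundary of $W\cap\bar\A$ thanks to the genuine hypothesis (they belong to walls other than $W$), and re-running the previous argument inside $W\cap\bar\A$ again pins down the endpoints. This finishes the proof.
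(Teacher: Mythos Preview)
Your argument is correct and follows the same line as the paper's: both extract from \cref{sphpair} that the tangent cone $C_X\P$ is the half-line $\R_{\ge 0}\w$, hence $\P$ lies on the line $X+\R\w$, and then conclude using the standing assumptions on the endpoints. The paper's proof is essentially a one-line sketch, so your version simply supplies the convexity details (both inclusions and the degenerate wall-contained case) that the paper leaves implicit.
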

 
 \begin{proof}
 The cone spanned by $\w$ is the cone of the corresponding spherical variety via \cref{sphpair}. It is the line segment defined by $X+\R\w\cap \A$.
 \end{proof}

\section{Local models of rank one}\label{rank1model}

For rank reasons, it follows that for smooth affine spherical varieties $Z$ of rank one, only the two extreme cases for $Z:=G\times^H V$ can show up:
\begin{lem}\label{rank1var}
A smooth affine spherical $G$-variety $Z$ of rank one is of exactly one of the following types:
\begin{enumerate}
 \item $Z=G/H, V=0$. We will call this the ``homogeneous case'', as it means that $Z$ is a homogeneous spherical variety.
 \item $Z=V=\C^n, H=G=\SL(n,\C) \text{ or } V=\C^{2n}, H=G=\SP(2n, \C)$. We call this the ``inhomogeneous case''.
\end{enumerate}
\end{lem}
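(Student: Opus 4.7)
The plan is to apply the structure theorem $Z \cong G\times^H V$ recalled just before the statement and to exploit additivity of rank under this decomposition. Writing $B_H := B \cap H$ for a compatible Borel of $H$, the open $B$-orbit of $Z$ fibres over the open $B$-orbit of $G/H$ with fibre a translate of the open $B_H$-orbit in $V$; accordingly, the weight lattice decomposes as $\Lambda(Z) = \Lambda(G/H) \oplus \Lambda_H(V)$ and ranks add: $\rk(Z) = \rk(G/H) + \rk_H(V)$. The hypothesis $\rk(Z) = 1$ therefore leaves exactly the two bookkeeping cases $(\rk(G/H),\rk_H(V)) = (1,0)$ and $(0,1)$.

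The case $(1,0)$ is immediate: a spherical $H$-module whose weight lattice is trivial has only constant $B_H$-semi-invariants, so $V = 0$ once trivial summands are absorbed into $H$, and $Z = G/H$ is homogeneous; this is alternative~(1).

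For the case $(0,1)$ I would first show $G = H$. An affine homogeneous spherical variety $G/H$ of rank zero satisfies $\C[G/H]^U = \C$, and a standard Grosshans-type argument (cf.\ \cite{KVS05}) forces $H$ to contain a Borel of $G$, so $H$ is parabolic. Since a proper parabolic yields a projective, non-affine quotient $G/H$, this is possible only if $H = G$. Consequently $Z \cong V$ is a smooth affine spherical $G$-module of rank one. What remains, and what I expect to be the real obstacle, is to pin down the possibilities for $V$: one appeals to the classification of spherical modules (Kac and later authors, encoded for the present purpose in \cite{KVS05}) and selects those whose weight monoid has rank one. A direct inspection of the list shows that only two irreducible rank-one spherical modules survive, namely the defining representation of $\SL(n,\C)$ on $\C^n$ and the defining representation of $\SP(2n,\C)$ on $\C^{2n}$, each having algebra of $U$-invariants a polynomial ring in a single generator; this is alternative~(2) and completes the dichotomy. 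The two cases are mutually exclusive since in~(2) the principal isotropy is a proper subgroup of $G$, so $Z$ is not homogeneous.
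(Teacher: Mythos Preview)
Your overall architecture is right, but the linchpin---the rank additivity $\rk(Z)=\rk(G/H)+\rk_H(V)$ via a direct sum $\Lambda(Z)=\Lambda(G/H)\oplus\Lambda_H(V)$---is not justified as written. The two lattices on the right live in different character groups ($X^*(T)$ for $G$ versus $X^*(T_H)$ for $H$), so the direct sum is not even well-posed. What one actually gets from the fibration is an exact sequence $0\to\Lambda(G/H)\to\Lambda(Z)\to\Lambda_{B\cap H}(V)$ obtained by restricting $B$-eigenfunctions to the fibre; but $B\cap H$ need not be a Borel of $H$ (for instance for $G=\SL(2)\times\SL(2)$, $H=\Delta\SL(2)$ with opposite Borels one has $U_G\cap H=\{e\}$), so identifying the image with $\Lambda_H(V)$ requires real work. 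Rank additivity in the spherical case can be extracted from Panyushev's complexity--rank formulas, but you would need to cite and invoke that explicitly rather than sketch a fibre argument.

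The paper sidesteps this issue entirely. It uses only the easy inequality $\rk(G/H)\le\rk(Z)$, coming from the surjection $Z\to G/H$, to get $\rk(G/H)\in\{0,1\}$. For $\rk(G/H)=0$ it argues (via \cite{Tim11}) that $G/H$ is projective, hence a point since it is also affine, so $G=H$ and $Z=V$ is a rank-one spherical module; your parabolic/Grosshans detour reaches the same conclusion but is more circuitous than simply noting $\C[G/H]=\C$. For $\rk(G/H)=1$ the paper does \emph{not} try to show $\rk_H(V)=0$; instead it invokes \cite{Gan10}, Lemma~2.4, to see that the generic isotropy of $Z$ has finite index in $H$, whence the fibres of $Z\to G/H$ are finite and $V=0$. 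That argument is short and avoids any delicate comparison of weight lattices across different groups. If you want to keep your approach, supply a proper reference for rank additivity; otherwise the route through Gandini's lemma is cleaner.
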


\begin{proof}
As the homogeneous space $G/H$ is the image of $Z=G\times^H V$ under the projection $Z\to G/H$, the rank of the homogeneous space $G/H$ is at most the rank of $Z$, so either $0$ or $1$.

If it is $0$, then $G/H$ is projective by \cite{Tim11}, Proposition 10.1, but, being also affine, it is a single point, i.e. $G=H$. We deduce $Z=V$, i.e. $Z$ is a spherical module of rank $1$. By the classification of spherical modules (cf. \cite{Kno98}), the module $V$ and the group $G$ must be of the form in statement (2) of the lemma.

Supose now $G/H$ has rank $1$, that means $Z$ and $G/H$ have the same rank. Let $K(a)\subseteq G$ be the stabilizer of a point $a$ in the open $G$-orbit of $Z$ such that $K\subseteq H$. By \cite{Gan10}, Lemma 2.4, the quotient $K/H$ is finite. This implies that the projection $Z\to G/H$ has finite fibers, i.e. $V$ contains only finitely many points. Hence $V=0$ and $Z$ is of the form in statement (1) of the lemma.
\end{proof}

Inhomogeneous smooth affine spherical varieties are are only possible for type $A$ or $C$.

\begin{lem}\label{inh1}
The generators of the weight monoids of smooth affine spherical homogeneous varieties are the $\w$ with the following values paired with the corresponding coroots of the drawn simple roots. We use the following diagrams for them:
\begin{center}
\begin{tabular}{cc}
Value on simple roots & diagram\\
$\begin{picture}(12000,1000)
\put(0,0){\usebox\dynkinathree}
\put(3600,0){\usebox\susp}
\put(7200,0){\usebox\dynkinathree}
\put(0,300){\tiny $1$}
\multiput(1800,300)(1800,0){2}{\tiny $0$}
\multiput(7200,300)(1800,0){3}{\tiny $0$}
\end{picture}$&$\begin{picture}(12000,1000)
\put(0,0){\usebox\dynkinathree}
\put(3600,0){\usebox\susp}
\put(7200,0){\usebox\dynkinathree}
\put(-350,-300){$\btr$}
\end{picture}$\\
$\begin{picture}(12000,1000)
\put(0,0){\usebox\dynkinathree}
\put(3600,0){\usebox\susp}
\put(7200,0){\usebox\dynkincthree}
\put(0,300){\tiny $1$}
\multiput(1800,300)(1800,0){2}{\tiny $0$}
\multiput(7200,300)(1800,0){3}{\tiny $0$}
\end{picture}$&
$\begin{picture}(12000,1000)
\put(0,0){\usebox\dynkinathree}
\put(3600,0){\usebox\susp}
\put(7200,0){\usebox\dynkincthree}
\put(-350,-300){$\btr$}
\end{picture}$
\end{tabular}
\end{center}
\end{lem}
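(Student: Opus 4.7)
The plan is to invoke \cref{rank1var}, which identifies the inhomogeneous smooth affine spherical $G$-varieties of rank one as precisely $Z=V=\C^n$ with $G=\SL(n,\C)$ acting via the standard representation, together with $Z=V=\C^{2n}$ with $G=\SP(2n,\C)$ acting via the standard representation. By definition the weight monoid $\G(Z)$ is the set of highest weights of irreducible $G$-subrepresentations appearing in the coordinate ring $\C[Z]$. Hence in each of the two cases it suffices to decompose $\C[V]$ into irreducibles and read off the generators.

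For $G=\SL(n,\C)$ acting on $V=\C^n$ the classical decomposition reads $\C[V]=\bigoplus_{k\ge 0}\operatorname{Sym}^k(V^{*})$, and every summand $\operatorname{Sym}^k(V^{*})$ is an irreducible $G$-module of highest weight $k\omega_1$ (one exhibits a $B$-eigenfunction as a power of a suitable coordinate, and the dimension $\binom{n+k-1}{k}$ matches the Weyl formula for $V_{k\omega_1}$ so no further irreducible can appear). It follows that $\G(Z)=\N\omega_1$, whose generator pairs as $1$ with $\alpha_1^\vee$ and as $0$ with the remaining simple coroots, which is exactly the labeling in the first diagram.

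For $G=\SP(2n,\C)$ acting on $V=\C^{2n}$ the symplectic form gives a $G$-equivariant isomorphism $V\cong V^{*}$, so $\C[V]=\bigoplus_{k\ge 0}\operatorname{Sym}^k V$. Unlike the orthogonal case, no trace condition intervenes: each $\operatorname{Sym}^k V$ is itself an irreducible $\SP(2n,\C)$-module of highest weight $k\omega_1$. Consequently $\G(Z)=\N\omega_1$ again, whose generator is precisely the weight depicted in the second diagram.

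The serious mathematical work has already been done in \cref{rank1var}; the present lemma is essentially a bookkeeping step. The only point demanding genuine care is matching conventions, namely checking that the leftmost node of each $A_n$- resp.\ $C_n$-diagram corresponds to $\alpha_1^\vee$ under the numbering from \cite{Bou81}, so that assigning value $1$ there and $0$ elsewhere really encodes the fundamental weight $\omega_1$. Everything beyond that is the textbook decomposition of symmetric powers of the defining representations of $\SL$ and $\SP$.
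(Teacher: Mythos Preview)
Your argument is correct and more self-contained than the paper's own proof, which simply invokes the classification of spherical modules from \cite{Kno98} without spelling out the weight monoids. By decomposing $\C[V]$ explicitly you avoid that external reference entirely, at the cost of the two short representation-theoretic computations you indicate.

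One small correction in the $\SL(n)$ case: with $V=\C^n$ the standard representation and $B$ upper triangular, the highest weight of $\operatorname{Sym}^k(V^{*})$ is $k\omega_{n-1}$ rather than $k\omega_1$, since the $B$-eigenvector among the coordinate functions is the last one. This is harmless for the lemma, because the type-$A$ diagram automorphism exchanges $\omega_1$ and $\omega_{n-1}$ and the diagram in the statement only records a fundamental weight at one end of the chain; equivalently, replacing the standard representation by its dual (equally a rank-one spherical $\SL(n)$-module) yields $\omega_1$ on the nose. No such ambiguity arises for $\SP(2n)$, where $V\cong V^{*}$ and $\omega_1$ sits at the unique short-root end.
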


\begin{proof}
We already saw that inhomogeneous smooth affine spherical varieties of rank one are spherical modules. The classification of spherical modules in \cite{Kno98} gives the desired result.
\end{proof}

Note that there is a certain analogy to Luna's diagrams. Again, the simple roots in the support of $\w$ whose coroots pair zero with $\w$ are the ones in the diagram that are not marked. In that sense, the triangles in this diagrams have a similar meaning to the circles in Luna diagrams.

 \begin{defn}
 Let $X\in\bar\A$ and suppose $\Phi(X):=\{S\setminus\{\a_{n_1},\a_{n_2},\dots,\a_{n_m}\}\}:=S_{n_1, \dots, n_m}$  We denote the {\bf set of all homogeneous local models in $X$} by
\[
 H(X):=H(n_1, \dots, n_m)
\]
For example, by stating $\w\in H(0)$ we say that $\w$ is (the weight of) a homogeneous local model for $\Phi(X)=S \setminus \{\a_0\}$.\\

We also can define the {\bf set of inhomogeneous local models $w$ fulfilling $\la w, \aa_k^\vee \ra =1 $ in $X$}  as
\[
 I_k(X):=I_k(n_1, \dots, n_m) 
\]
\end{defn}
So, for example, $\w\in I_1(0)$ means that $\w$ is an inhomogeneous local model on $\Phi(X):= S \setminus \{\a_0\}$ and that $\w(\a_1):=\la \w, \aa_1^\vee\ra=1$. (cf. \cref{inh1})

Now let us look at some examples of moment polytopes of rank one.

 Let us assume that $\w$ is the weight of a {\bf homogeneous smooth affine spherical variety} of rank one in $X_1$ where $\P=[X_1X_2]$ is the moment polytope of a q-Hamiltonian manifold. Then $\w$ and the local model $\w^\sharp$ in $X_2$ must  generate the same lattice, hence $\w^\sharp=\w-t\delta$ for some $t\in \R$ and $\delta$ the root given by the Dynkin labels. We call that the case of a ``bi-homogeneous'' manifold (or momentum polytope).
 
 \begin{example}
 We fix the affine root system $A_n^{(1)}$. We take $X_1=(0,0,0,\dots,0)$, the local root system is $S(X_1)=S\setminus\{\alpha_0\}$. We think about the local model given by $\w=\aa_1+\aa_2+\dots+\aa_n=(1,0,\dots, 0, -1)$ which corresponds to the variety $\nicefrac{\SL(n+1)}{\GL(n)}$. We want our polytope to touch every wall of the alcove, hence we demand $\a_0 \in S(X_2)$. Then the other point where the moment polytope hits the walls of the alcove is $X_2=(1,0,\dots,0,-1)$ with local root system $\Phi\setminus\{\a_1, \a_n\}$. Because $\w^\sharp:=\w-\delta=-\aa_1-\aa_2-\dots-\aa_n=\aa_0$ is a spherical root of rank one in this local root system corresponding to $\nicefrac{\SL(2)}{T}$, we have found a bi-homogeneous variety. (Technically, the acting group is $\SL(2)\times \SL(n)$ with a trivial operation of the second factor. We will always leave out components that act trivially.)
\end{example}

\begin{example}
 We look at $A_n^{(1)}$ and again, $X_1=(0,0,0,\dots,0)$ with local root system $\Phi(X_1)=S\setminus\{\alpha_0\}$. The first fundamental weight $\w_1$ of $A_n$ is an inhomogeneous local model here, corresponding to $\SL(n+1)\times^{\SL(n+1)} \C^{n+1}$. The local root system of $X_2$ is $S\setminus\{\a_1\}$ (we shall have a general theorem for that later: \cref{inthm}) which also permits an inhomogeneous local model. We verify by elementary calculations that $-\w$ is an inhomogeneous local model there.
\end{example}

\begin{example}
 We consider $\operatorname{G}_2^{(1)}$. Then $\w=\aa_2+2\aa_1$ is a homogeneous model for $X_1=(0,0,0)$, the corresponding variety is $\nicefrac{\operatorname{G}_2}{\SL(3)}$. Hence $ S(X_2)=\{\a_0,\a_2\}$, and $-\w$ is an inhomogeneous model corresponding to $\SL(3)\times^{\SL(3)} \C^3$ there.
\end{example}

These three examples lead to the following definition:
\begin{defn}
Let  the line segment $[X_1X_2]=(X_1+\R\w)\cap\bar\A$ be the moment polytope of a quasi-Hamiltonian manifold. 
\begin{enumerate}
\item We call this moment polytope {\bf bi-homogeneous} if $\w$ is the generator of the weight monoid of a homogeneous smooth affine spherical variety in $X_1$ and $\w^\sharp :=t\d-\w$ for $t \in \R$ is the generator of the weight monoid of a homogeneous smooth affine spherical variety in $X_2$.
\item We call this moment polytope {\bf bi-inhomogeneous} if $\w$ is the generator of the weight monoid of an inhomogeneous smooth affine spherical variety in $X_1$ and $-\w$  is the generator of the weight monoid of an inhomogeneous smooth affine spherical variety in $X_2$.
\item We call this moment polytope {\bf mixed} if $\w$ is the generator of the weight monoid of a homogeneous smooth affine spherical variety in $X_1$ and $-\w$ is the generator of the weight monoid of an inhomogeneous smooth affine spherical variety in $X_2$.
\end{enumerate}
\end{defn}
Of course, part 3 of this definition is symmetric in switching $X_1$ and $X_2$.

\section{Structure of bi-homogeneous polytopes}

We first think about the structure of bi-homogeneous moment polytopes. Recall that $S(X_1):=\{\a\in S: \a(X_1)=0\}$. We want to prove: 

\begin{thm}\label{homthm} 
For all bi-homogeneous moment polytopes of rank one, $S(X_1)$ and $ S(X_2)$ contain at least $n-1$ roots.
\end{thm}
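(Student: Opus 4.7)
My plan is to recast the inequality $|S(X_i)|\geq n-1$ as an upper bound on the number of affine simple roots $\alpha\in S$ on which $\omega$ has a prescribed sign, and then verify this bound by a case-by-case inspection of \cref{lunad}.

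\textbf{Step 1 (geometric identification of $S(X_1),S(X_2)$).} Write the polytope as $\P=\{X_1+s\omega:0\leq s\leq s_{\max}\}$. For every $\alpha\in S$ one has $\alpha(X_1+s\omega)=\alpha(X_1)+s\,\bar\alpha(\omega)$. Hence $\alpha\in S(X_1)$ forces $\bar\alpha(\omega)\geq 0$ (from $\alpha|_\P\geq 0$). The hypothesis that $\P$ touches every wall of $\A$ forces $S=S(X_1)\cup S(X_2)$, so if $\alpha(X_1)>0$ then $\alpha(X_2)=0$, and in turn $\bar\alpha(\omega)<0$. Therefore
\[
S(X_1)=\{\alpha\in S:\bar\alpha(\omega)\geq 0\},\qquad S(X_2)=\{\alpha\in S:\bar\alpha(\omega)\leq 0\},
\]
so $|S\setminus S(X_2)|=|\{\alpha\in S:\bar\alpha(\omega)>0\}|$. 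Because $\bar\delta=0$ and $\omega^\sharp=t\delta-\omega$, we have $\bar\omega^\sharp=-\bar\omega$, so symmetrically $|S\setminus S(X_1)|=|\{\alpha\in S:\bar\alpha(\omega^\sharp)>0\}|$.

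\textbf{Step 2 (key lemma).} I would then establish the following: \emph{for every entry $\omega'$ of \cref{lunad}, at most two simple roots $\alpha\in S$ satisfy $\bar\alpha(\omega')>0$.} Since $\bar\alpha(\omega')$ and $\la\omega',\alpha^\vee\ra$ have the same sign, a simple root $\alpha\notin\supp(\omega')$ gives a non-positive Cartan sum and so cannot contribute; the count reduces to nonzero positive Dynkin labels $\la\omega',\alpha_i^\vee\ra$ with $\alpha_i\in\supp(\omega')$. A direct Cartan-matrix computation shows that each entry is either a positive multiple of a single fundamental weight of its support (the $2\alpha_1$, $B_r$, $B_3$, $C_r$, $D_r$, $D_3$, $F_4$, $G_2$ patterns, giving one positive pairing), or of $\omega_1+\omega_r$ in type $A_r$ with $r\geq 2$, or of $\omega_1+\omega_1'$ in the disconnected case $[\tfrac12]\alpha+\alpha'$; the last two cases yield exactly two positive pairings.

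\textbf{Step 3 (conclusion).} Both $\omega$ and $\omega^\sharp$ are weight generators of rank-one homogeneous smooth affine spherical varieties, so both appear in \cref{lunad}. Applying the key lemma to $\omega$ gives $|S\setminus S(X_2)|\leq 2$, and applying it to $\omega^\sharp$ gives $|S\setminus S(X_1)|\leq 2$, whence $|S(X_i)|\geq(n+1)-2=n-1$. The main obstacle is the case-by-case verification of the key lemma in Step~2; it is purely combinatorial but requires careful treatment of the non-simply-laced patterns ($B$, $C$, $F_4$, $G_2$), whose off-diagonal Cartan entries can be $-2$ or $-3$.
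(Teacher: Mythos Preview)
Your argument is correct and considerably shorter than the paper's. The paper proceeds by contradiction: it assumes three affine simple roots are missing from $S(X_1)$ and then runs a lengthy case analysis by affine Dynkin type (with separate lemmas for $A_n^{(1)}$ and $E_i^{(1)}$), using \cref{trick} to track which roots can or cannot land in $S(X_2)$ and \cref{homlem1} to constrain the shape of the complement of $\supp(\omega)$. Your approach bypasses all of this by making one clean observation: for a genuine segment, $S\setminus S(X_2)$ is exactly the set of affine simple roots on which $\omega$ has strictly positive Dynkin label, so the bound reduces to the elementary fact that every entry of \cref{lunad} is (a positive multiple of) either a single fundamental weight of its support or $\omega_1+\omega_r$ in type $A_r$ (resp.\ $\omega_1+\omega_1'$ in type $A_1\times A_1$), hence has at most two positive labels. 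This is a genuine simplification; the only cost is that the paper's case analysis, while longer, produces along the way the finer information about which configurations of missing roots actually occur, which feeds directly into the subsequent explicit tables. Your Step~2 check is routine but should be written out carefully for the non-simply-laced patterns, as you note; the $C_r$ entry is $\omega_2$ and the $F_4$ entry is $\omega_4$, so both give a single positive label.
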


The rest of this section is to prove this statement.

\begin{defn}\leavevmode\\
\begin{enumerate}
 \item  Two simple roots are called neighbors if they are connected in the affine Dynkin diagram.
 \item  We call a simple root {\em at the end} of a diagram if it is only connected to one other root.
 \item If we say ``left'' or ``right'', we always mean left or right in the corresponding affine Dynkin diagram in \cite{Kac90}.
 \item  We call the root given by the Dynkin labels of an affine root system $\delta$, e.g. for $A_n^{(1)}$, we have $\d=\sum_{i=0}^n \a_i$.
 \item We say a simple root $\a_i$ is supported in $\w$ if its coefficient in $\w=\sum k_i \a_i$ is nonzero.
\end{enumerate}
\end{defn}

We remember that we asked our moment polytopes to ``touch every wall of the alcove'', which means every simple root must be included in $S(X_1)$ or $ S(X_2)$.

\begin{lem}\label{trick} 
Let $\alpha$ be a simple root, $\w$ a dominant weight of a smooth affine spherical variety of rank one that is a local model in $X_1$. Suppose $X_2=X_1+c\cdot \w$ for some $c \in \R$. Then:
\begin{enumerate}
\item  Let $\a \in \Phi(X_1)$ and $\a\in S^p(\w)$. Then $\a\in\Phi(X_2)$.
\item Let $\a \in \Phi(X_1)$ and $\alpha \notin S^p(\w)$. Then $\a \notin \Phi(X_2)$.
\item Let $\a \notin \Phi(X_1)$ and $\alpha \in S^p(\w)$. Then $\a \notin \Phi(X_2)$.
\item Let $\a \notin \Phi(X_1)$ and $\alpha \notin S^p(\w)$. Then $\a\in\Phi(X_2)$ if and only if $c=-\frac{\a(X_1)}{\la \w, \a^\vee\ra}$
\end{enumerate}
\end{lem}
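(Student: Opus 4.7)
The plan is a direct case analysis using the affine-linearity formula \eqref{afflin}. Since $X_2$ lies on the line through $X_1$ in direction $\w$, we may write $X_2 = X_1 + c\w$ with $c \ne 0$ (the two endpoints of the polytope are distinct).

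First, using \eqref{afflin} together with the identification of the weight $\w$ with an element of $\bar\aaa$ via the invariant scalar product on $\mathfrak{k}$, I would compute
\[
\alpha(X_2) = \alpha(X_1) + c\langle \aa, \w\rangle.
\]
Then I would observe that the condition $\alpha \in S^p(\w)$, which reads $\langle\w,\alpha^\vee\rangle=0$, is equivalent to $\langle\aa,\w\rangle=0$, since $\alpha^\vee$ is a positive scalar multiple of $\aa$. Hence in the formula above the hypothesis $\alpha\in\Phi(X_1)$ controls whether the first summand vanishes, while the hypothesis $\alpha\in S^p(\w)$ controls whether the correction term $c\langle\aa,\w\rangle$ vanishes.

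With these two observations the four cases are immediate. In (1) both summands vanish, hence $\alpha\in\Phi(X_2)$. In (2) only $\alpha(X_1)$ vanishes and $c\langle\aa,\w\rangle\ne 0$ (using crucially that $c\ne 0$), so $\alpha\notin\Phi(X_2)$. In (3) $\alpha(X_1)\ne 0$ while the correction vanishes, so $\alpha(X_2)=\alpha(X_1)\ne 0$. In (4) neither summand is automatically zero, so $\alpha(X_2)=0$ becomes the linear equation $c\langle\aa,\w\rangle=-\alpha(X_1)$, which yields the displayed formula after applying the standard normalization $\alpha^\vee = 2\aa/\|\aa\|^2$.

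The only real subtlety is keeping track of the fact that $c\ne 0$ in case (2); beyond that, this is essentially just an unwinding of the definitions of $\Phi(X_i)$ and $S^p(\w)$, and I do not anticipate any substantive obstacle.
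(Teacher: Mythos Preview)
Your proposal is correct and follows essentially the same route as the paper: both arguments apply the affine-linearity identity \eqref{afflin} to expand $\alpha(X_2)=\alpha(X_1)+c\langle\aa,\w\rangle$ and then read off the four cases from whether each summand vanishes. Your explicit remarks that $c\ne 0$ is needed in case (2) and that passing from $\langle\aa,\w\rangle$ to $\langle\w,\alpha^\vee\rangle$ is just a positive rescaling make the write-up slightly more careful than the paper's, but the argument is the same.
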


\begin{proof}\leavevmode \\
We use the arithmetic of affine combinations, namely equation \eqref{afflin}.
\begin{enumerate}
\item Remember that $\a\in S^p(\w)$ means that $\la \w, \alpha ^\vee\ra=0$. Having $\a \in \Phi(X_1)$ means $\alpha(X_1)=0.$ Hence
 $
  \alpha(X_2)=\underbrace{\alpha(X_1)}_{=0}+c \cdot \underbrace {\la \w, \alpha^\vee\ra}_{=0}=0,
 $
so $\alpha$ is in $\Phi(X_2)$.
\item 
In this case, we have
$
 \a(X_2)=\underbrace{\alpha(X_1)}_{=0}+c\cdot\underbrace{\la \w, \alpha^\vee\ra}_{\ne 0}\ne 0
$, 
so $\a$ is not in $\Phi(X_2)$.
\item In this case, we have
$
 \a(X_2)=\underbrace{\alpha(X_1)}_{\ne 0}+c\cdot\underbrace{\la \w, \alpha^\vee\ra}_{=0}\ne 0
$, 
so $\a$ is not in $\Phi(X_2)$.
\item
In this case, we have
\[
 \a(X_2)=\underbrace{\alpha(X_1)}_{\ne 0}+c\cdot\underbrace{\la \w, \alpha^\vee\ra}_{\ne 0} 
\]
We have $\a \in \Phi(X_2)$ if and only if this equation is zero. Hence $c=-\frac{\a(X_1)}{\la \w, \a^\vee\ra}$.
\end{enumerate}
\end{proof}

\begin{rem}
 This means in particular: If $S(X_1)=S\setminus \{\a\}$, meaning there is exactly one simple root not in $S(X_1)$, we can always realize $\a\in S(X_2)$.
\end{rem}

Let us now discuss what $t\in\R$ can appear in $\w^\sharp=t\delta-\w$.
As spherical roots are linear combinations of simple roots and the coefficients in these linear combinations are in $\N[\frac12]$ and $\le 3$,  $t$ could only be chosen such that all coefficients of simple roots in $\delta$ are in $\N[\frac12]$ and no coefficient of $t\delta$ is bigger then twice the biggest coefficient of a spherical root possible in any subsystem of the affine root system (e.g. 6 in root systems with subsystems of type $B_3$ or $F$).  \\

\begin{lem}\label{homlem1}
Let $\A^0$ be the fundamental alcove of an affine root system, and $\w\in H(X_1)$. Then the roots not supported in $\w$ must all be contained in one component of connection of $ S(X_2)$, or they must form $A_1\times A_1$ in $ S(X_2)$.
\end{lem}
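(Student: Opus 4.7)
The plan is to exploit the bi-homogeneous hypothesis to transfer the support of $\w$ into information about the companion local model at $X_2$, and then invoke the classification of rank-one smooth affine spherical homogeneous varieties. In this setting the weight monoid generator at $X_2$ is $\w^\sharp = t\delta - \w$ for some nonzero $t$, with $\mathrm{supp}(\w^\sharp) \subseteq S(X_2)$.

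Writing $\w = \sum_{\alpha_i \in S} k_i \alpha_i$, $\w^\sharp = \sum_{\alpha_i \in S} k_i^\sharp \alpha_i$, and $\delta = \sum_{\alpha_i \in S} n_i \alpha_i$ (where each affine Dynkin label satisfies $n_i \geq 1$), comparing coefficients in the identity $\w^\sharp = t\delta - \w$ gives $k_i^\sharp = t n_i - k_i$. For any simple root $\alpha_i$ with $k_i = 0$ this forces $k_i^\sharp = t n_i \neq 0$, so every $\alpha_i \notin \mathrm{supp}(\w)$ automatically lies in $\mathrm{supp}(\w^\sharp) \subseteq S(X_2)$; in particular $S \setminus \mathrm{supp}(\w) \subseteq \mathrm{supp}(\w^\sharp)$.

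It therefore suffices to describe $\mathrm{supp}(\w^\sharp)$ as a subset of $S(X_2)$. Since $\w^\sharp$ is the weight monoid generator of a homogeneous smooth affine spherical variety of rank one, its support can be read off \cref{lunad}: an inspection of the twelve entries shows that the support is a connected Dynkin subdiagram in every case except the spherical root $[\tfrac{1}{2}]\alpha+\alpha'$, where it is precisely of type $A_1\times A_1$. Consequently the roots not supported in $\w$ either all lie in one connected component of $S(X_2)$ or form an $A_1\times A_1$ there, which is the claim. The only non-routine ingredient is this case-check in the classification; everything else is straightforward from the linear identity $\w^\sharp = t\delta - \w$ combined with positivity of the Dynkin labels $n_i$.
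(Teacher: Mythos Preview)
Your proof is correct and follows essentially the same approach as the paper's proof. The paper's argument is extremely terse (``This is because the weights of homogeneous smooth affine spherical varieties are exactly the spherical roots of rank one. Inspecting \cref{lunad} gives the desired result''); you have simply made explicit the step the paper leaves implicit, namely that $S\setminus\supp(\w)\subseteq\supp(\w^\sharp)$ via the identity $\w^\sharp=t\delta-\w$ together with positivity of the Dynkin labels, before invoking the same case-check in \cref{lunad}.
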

\begin{proof}
This is because the weights of homogeneous smooth affine spherical varieties are exactly the spherical roots of rank one. Inspecting \cref{lunad} gives the desired result.
\end{proof}

We shall use this to prove the following:

\begin{thm}
 Let $\Phi$ be an affine root system not of type $A_n^{(1)}$, $E_i^{(1)}, i\in \{6,7,8\}$. If $|S(X_1)|<n-1$, no bi-homogeneous polytopes of rank one are possible.
\end{thm}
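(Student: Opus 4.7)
I would argue by contradiction. Suppose $|S(X_1)| \leq n-2$ and set $T := S \setminus S(X_1)$, so that $|T| \geq 3$. Because the polytope touches every wall of $\bar{\A}$, we have $S(X_1) \cup S(X_2) = S$, and therefore $T \subseteq S(X_2)$. Since $\omega$ is a rank one spherical root at $X_1$, its support is contained in $S(X_1)$; consequently, for every $\alpha_j \in T$ the coefficient of $\alpha_j$ in $\omega^\sharp = t\delta - \omega$ is exactly $tk_j$, where $k_j \geq 1$ denotes the Dynkin label of $\alpha_j$ in $\Phi$.

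The next step is to discard $t \leq 0$. If $t = 0$ then $\omega^\sharp = -\omega$ is a non-positive combination of simple roots and hence cannot occur in List~\ref{lunad}; if $t < 0$, the coefficients of $\omega^\sharp$ on $T$ are strictly negative, which is likewise incompatible. Hence $t > 0$ and every $\alpha_j \in T$ sits inside $\mathrm{supp}(\omega^\sharp)$ with positive coefficient $tk_j$. Because $\omega^\sharp$ is itself a rank one spherical root, its support is a connected subdiagram of $S(X_2)$ (or an $A_1 \times A_1$ pair) whose labelling matches one of the entries of List~\ref{lunad}. In particular, the label ratios $k_j/k_{j'}$ with $j, j' \in T$ are forced to coincide with coefficient ratios inside a single such entry.

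This is precisely the point where the hypothesis $\Phi \neq A_n^{(1)}, E_i^{(1)}$ becomes decisive. In $A_n^{(1)}$ every Dynkin label equals $1$, which trivialises the ratio constraint and allows three equal coefficients $tk_j = t$ to sit inside the $\alpha_1 + \dots + \alpha_r$ pattern; in $E_6^{(1)}, E_7^{(1)}, E_8^{(1)}$ the long $A$-type branches carry strings of equal labels, permitting the same fit. In all remaining affine types the Dynkin label palette is tightly restricted: $\{1,2\}$ for the classical families $B_n^{(1)}, C_n^{(1)}, D_n^{(1)}$ (with only the two end nodes carrying label $1$), $\{1,2,3,4\}$ for $F_4^{(1)}$, $\{1,2,3\}$ for $G_2^{(1)}$, and analogously for the twisted affine types. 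For each such diagram I would enumerate the possible three-element (or larger) subsets $T$ compatible with the connectedness of $\mathrm{supp}(\omega^\sharp)$ inside $S(X_2)$, and then confront the prescribed multiset $\{tk_j\}_{j \in T}$ with every entry of List~\ref{lunad}. The contradiction is immediate in each case: either $k_j$ takes two distinct values on $T$, producing a ratio $2{:}1$ or worse that cannot be realised on three simple roots arranged as in the ambient diagram by any rank one spherical root of the list, or all three labels coincide -- which for $B_n^{(1)}, C_n^{(1)}, D_n^{(1)}$ forces $k_j = 2$ throughout $T$, placing $T$ entirely in the interior of the affine diagram and contradicting the wall-touching condition at the two end walls.

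The main obstacle is the last case-by-case check. For $F_4^{(1)}$ and $G_2^{(1)}$ it is a short finite verification; for the infinite classical families $B_n^{(1)}, C_n^{(1)}, D_n^{(1)}$ the argument is uniform in $n$ once one splits on whether the endpoints of $T$, read along the induced subdiagram, carry label $1$ or $2$. Lemma~\ref{homlem1} serves as a useful pre-filter here, since it already confines the complement of $\mathrm{supp}(\omega)$ to a single connected component (or an $A_1 \times A_1$) of $S(X_2)$, which drastically narrows the set of $T$ that needs to be examined.
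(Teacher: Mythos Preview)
Your opening reduction is sound: setting $T = S \setminus S(X_1)$ with $|T| \geq 3$, observing $T \subseteq S(X_2)$, and noting that the coefficient of each $\alpha_j \in T$ in $\omega^\sharp = t\delta - \omega$ equals $tk_j$ is correct and is in fact close to the coefficient argument the paper uses for the $E$-types in Lemma~\ref{homlem3}. The gap is in your endgame for the classical families.

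First, the sentence ``all three labels coincide --- which for $B_n^{(1)}, C_n^{(1)}, D_n^{(1)}$ forces $k_j = 2$ throughout $T$, placing $T$ entirely in the interior of the affine diagram and contradicting the wall-touching condition at the two end walls'' is simply false on both counts. In $D_n^{(1)}$ the four end nodes all carry label $1$, so $T = \{\alpha_0, \alpha_1, \alpha_n\}$ gives three equal labels that are not $2$. More importantly, even when $T$ does lie in the interior, there is no contradiction with wall-touching: the end roots then belong to $S(X_1)$, which is exactly what wall-touching allows. Nothing forces an end root into $T$.

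Second, your ratio argument is too coarse. List~\ref{lunad} does contain entries with $2{:}1$ coefficient ratios among three or more supported simple roots (e.g.\ the type-$C$ root $\alpha_1 + 2\alpha_2 + \dots + 2\alpha_{n-1} + \alpha_n$, or $[\tfrac12]\alpha_1 + 2\alpha_2 + \alpha_3$), so observing that $\{tk_j : j \in T\}$ contains two distinct values does not by itself rule anything out. You would need to match the \emph{position} of each $tk_j$ inside the ambient diagram of $S(X_2)$ against the specific shape of each list entry, and this is precisely the case analysis you have not carried out.

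The paper takes the mirror-image route: rather than constraining $\omega^\sharp$ via coefficients, it classifies the adjacency pattern of $T$ inside the affine diagram (all three neighbours; two neighbours at an end; two neighbours in the middle; none neighbours) and, for each pattern, runs through the possible supports of $\omega$ on the components of $S(X_1)$. In each case either Lemma~\ref{trick}(3) produces some $\alpha \in T$ with $\langle \omega, \alpha^\vee \rangle = 0$, forcing $\alpha \notin S(X_2)$, or Lemma~\ref{homlem1} applied to $\omega^\sharp$ shows that the unsupported roots land in too many components of $S(X_2)$. Your Dynkin-label idea could likely be pushed through, but it would require essentially the same adjacency case split once you try to pin down where inside $S(X_2)$ the roots of $T$ actually sit.
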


\begin{proof}
We first state that for $D_4^{(3)}$ and $G_2^{(1)}$, the theorem is trivially true. So let us assume the local root system contains more than three simple roots.
  \begin{rem}
  The arguments in the following proof do not see the length of the several simple roots involved. So we only use the corresponding simply laced diagrams for instructive drawings. We mark the simple roots not in $(X_1)$ with a cross above.
 \end{rem}

We assume that three roots are missing in $S(X_1)$. The following things can happen:
\begin{itemize}
 \item The three missing roots are neighbors. In root systems that do not have an ending of type $D$, the middle root of the missing roots is always in $S^p$ for any spherical root we could choose, so it is never in $ S(X_2)$.\\
  
 If the local root system in $X_1$ had an ending of type $D$ at the (without loss of generality) left end and we had chosen $\{\a_{2}, \a_{1}, \a_0\}\notin S(X_1)$, then $\a_{1}$ and $\a_0 \in S^p\forall w$ on $S(X_1)$ and hence not in $ S(X_2)$ by \cref{trick}.
 
 If we had chosen $X_1$ such that $\{\a_{3},\a_{2},\a_{0}\}\notin S(X_1)$, we had $\a_0\in S^p$ for every $\w$ we could choose and hence $\a_0\notin  S(X_2)$.

 \item Two of the missing roots are neighbors at the, without loss of generality, right end of the root system, meaning $\a_{n-1}, \a_n$ for ending $B$ and $C$ and w.l.o.g $\a_{n-2}, \a_n$ for type $D$. For every $\w$ we could choose,  $\a_n \notin S(X_2)$ by  \cref{trick}.

 \item Two are neighbors,but not at the end of $S(X_1)$: 
 First assume the missing roots $\a_{n-2}, \a_{n-1}$ are surrounded by $A_1\times A_1$: 
\[ 
  \begin{picture}(2400,1800)(-300,-900)
		\put(-1800,0){\dots}        
        \put(0,0){\usebox\dynkinafour}
        \put(5400,0){\usebox\dynkinatwo}
        \put(-600,600){$\times$}
        \put(3000,600){$\times$}
        \put(2500,-1200){\tiny $n-2$}
        \put(4800, -1200){\tiny $n-1$}
        \put(4800,600){$\times$}
        \end{picture}
 \]

Then $\aa_{n-4},\aa_{n-2},\aa_{n-1}$ needed to be supported in $\d-t\w$ and are neither connected nor $A_1\times A_1$, as $\aa_{n-3}\notin  S(X_2)$, contradiction.\\

Assume now the two missing roots $\aa_{d},\aa_{d+1}$ are neighbored by something that is not $A_1\times A_1$. For a spherical root $\w$ supported on the component left of $\a_d$ we get $\a_{d+1}\in S^p$, so $\a_{d+1}$  will never be in $ S(X_2)$ by  \cref{trick}, same  the component right of $\a_{d+1}$ and $\a_d$.
 \[
  \begin{picture}(2400,1800)(-300,-900)
        \put(0,0){\usebox\dynkinafour}
        \put(5400,0){\usebox\dynkinatwo}
        \put(-1800,0){$\dots$}
        \put(8100,0){$\dots$}
        \put(3000,600){$\times$}
        \put(3000,-1200){\tiny $d$}
        \put(4800, -1200){\tiny $d+1$}
        \put(4800,600){$\times$}
        \end{picture}
 \]

 \item None of them are neighbors.\\
 If no ending of $\Phi$ is of type $D$, the three roots $\notin S(X_1)$ could cut out an $A_1\times A_1$:
  \[
  \begin{picture}(2400,1800)(-300,-900)
        \put(0,0){\usebox\dynkinafour}
        \put(5400,0){\usebox\dynkinatwo}
        \put(-600,600){$\times$}
        \put(3000,600){$\times$}
        \put(6600,600){$\times$}
        \put(1500,-900){$\tiny \a$}
        \put(5100, -900){$\tiny \a'$}
        \end{picture}
 \]
 If there are exactly five roots in $\Phi$, like in the instructive drawing above, the roots not supported in $\w=[\frac12]\a+\a'$ form an $A_1\times A_1\times A_1$, contradicting \cref{homlem1}.

Suppose now $\Phi$ has more then five simple roots and no ending of $S(X_1)$ is of type $D$:
   \[
  \begin{picture}(2400,1800)(-300,-900)
        \put(-1800,0){$\dots$}
        \put(9000,0){$\dots$}
        \multiput(0,0)(3600,0){3}{\usebox\dynkinatwo}
        \multiput(1800,0)(3600,0){2}{\usebox\shortsusp}
        \multiput(1200,900)(3600,0){3}{$\times$}
        \put(1200,-1200){$d$}
        \put(4800,-1200){$e$}
        \put(8400,-1200){$f$}
        \put(000,600){\textrm{I}}
        \put(3600,600){\textrm{II}}
        \put(7000,600){\textrm{III}}
        \put(10800,600){\textrm{IV}}
        \end{picture}
 \]
 We number the connected components from left to right. A $\w$ for component {\textrm I} or {\textrm I+II} or \textrm{II} has $\aa_f\in S^p$ and hence $\aa_f\notin  S(X_2)$. Analogous for an $\w$ for component {\textrm III, III+IV}  or {\textrm IV} and $\aa_d$. If component {\textrm I} and {\textrm IV} form $A_1\times A_1$, $\aa_e\notin  S(X_2)$ for $\w$ of type $D_2$. If component {\textrm I} and {\textrm II} form $A_1 \times A_1$, $\aa_f\notin  S(X_2)$ for such an $\w$. If component {\textrm I} and {\textrm III} form $A_1\times A_1$, the roots not supported in $\w$ of type $D_2$ supported on these two components are distributed over two connected components in $ S(X_2)$ that are not of type $A_1$, contradicting \cref{lunad}.\\
 
If the local root system had one ending of type $D$ and exactly 4 simple roots (which is the case for $B_3^{(1)}$ or $A_{3}^{(2)}$), we could have $\{\aa_0,\aa_1,\aa_3\}\notin S(X_1)$. But then these roots  have to be in $ S(X_2)$ and form three connected components. Impossible.\\
 
 If there is an ending of type $D$ on the w.l.o.g. left, it could happen that $S(X_1)$ only has two components. Let us assume $\a_0, \a_1, \a_d$ with $2<d<n-1$ are $\notin S(X_1)$.
 
 \[
\begin{picture}(0,2400)(0,0)
  \thicklines\put(1800,1200){\usebox\susp}
  \put(5400,1200){\usebox{\shortsusp}}
  \put(1800,1200){\line(-1,1){1200}}
  \put(1800,1200){\line(-1,-1){1200}}
  \put(-300,00){$\times$}
  \put(-300,2400){$\times$}
  \put(5400,900){$\times$}
  \put(5400,-300){$\a_d$}
 \end{picture}
\]
 
 One possible weight on the component right of $\a_d$ leads to $\a_0, \a_1\notin  S(X_2)$ by \cref{trick}.
The other possibility is something supporting $\{\a_2,\dots,\a_{d-1}\}$. But then $\delta-\w$ consists of three connected components of type either $A_1\times A_1 \times $something$ $ each containing one missing root if we had $\w=\aa_2+\dots+\aa_{d-1}$ or $A_3\times $something each containing a missing root if we had $\w=[\frac12]\aa_2+2\aa_3+\aa_4$ or $\w=[2]\aa_2$. Both oppose \cref{lunad}.\\

 If $\aa_0, \aa_1, \aa_n$ were $\notin S(X_1)$, the roots not supported in $\w$ were contained in two different components of $ S(X_2)$ that are not $A_1\times A_1$, contradicting\cref{homlem1}.\\

 Let us now assume that we are in $D_n^{(1)}$ and three roots at the ends are $\notin S(X_1)$, without loss of generality  $\a_0, \a_1, \a_n \notin S(X_1)$: 
  \[
\begin{picture}(0,3600)(0,0)
  \thicklines\put(1800,1200){\usebox\susp}
  \put(5400,1200){\usebox{\bifurc}}
  \put(1800,1200){\line(-1,1){1200}}
  \put(1800,1200){\line(-1,-1){1200}}
  \put(-300,0){$\times$}
  \put(-300,2400){$\times$}
  \put(6300,2400){$\times$}
 \end{picture}
\]
If we chose $\w=\aa_2+\dots+\aa_{n-1}$, we had $\aa_2, \aa_{n-1} \notin  S(X_2)$ via \cref{trick}, and hence the roots not supported in $\w$ are in three components of connection of $ S(X_2)$, contradicting \cref{homlem1}.
If we chose $[\frac12]\aa_2+2\aa_3+\aa_4$ in $D_5^{(1)}$, we had $ S(X_2)$ of type $A_3\times A_1 \times A_1$, where the missing roots are distributed over one $A_1$ and $A_3$, contradicting \cref{homlem1}.
\end{itemize}
This finishes the proof.
\end{proof}

Let us now settle the remaining cases. The following lemma is more general than needed to prove \cref{homthm} above.

\begin{lem}\label{homlem3}
 Let $\Phi$ be an affine root system of type $E_i^{(1)}, i \in \{6,7,8\}$. Then there are no bi-homogeneous manifolds of rank one.
\end{lem}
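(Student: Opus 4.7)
My plan is a finite case analysis. Since $E_n^{(1)}$ is simply laced, by \cref{lunad} the only possible shapes of a homogeneous spherical root of rank one on a subsystem of $E_n^{(1)}$ are: $\a$, $2\a$, $[\frac12]\a+\a'$ on an $A_1\times A_1$, $\a_{i,j}:=\a_i+\cdots+\a_j$ on an $A_r$-chain, $[\frac12]\a_1+2\a_2+\a_3$ on a $D_3$, and $[\frac12]2\a_1+\cdots+2\a_{r-2}+\a_{r-1}+\a_r$ on a $D_r$ with $r\geq 4$. In particular every coefficient of a simple root in such a spherical root is at most $2$.

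First I would use \cref{trick} to record that, under the wall-touching hypothesis, $S(X_2)$ is forced to contain $(S(X_1)\cap S^p(\w))\cup(S\setminus S(X_1))$. This in turn pins down the parameter $c$ in $X_2=X_1+c\w$ and, together with the identity $\w+\w^\sharp=t\delta$ for the appropriate $t\in\R$ coming from the bi-homogeneity, forces tight relations between the coefficient patterns of $\w$ and $\w^\sharp$ against the marks of $\delta$.

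Next I would read off the marks of $\delta$ from \cite{Kac90}: for $E_6^{(1)}, E_7^{(1)}, E_8^{(1)}$ the maximum marks are $3$, $4$, $6$ respectively, attained at (or adjacent to) the trivalent branch node. Since both $\w$ and $\w^\sharp$ contribute at most $2$ to any coefficient of $\w+\w^\sharp=t\delta$, the parameter $t$ is severely restricted, and at most the cases $t\in\{\tfrac12,1\}$ need to be examined (the case $t=0$ collapses to $\w^\sharp=-\w$, which cannot itself be a dominant spherical weight when $\w$ is).

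Finally I would enumerate, for each of the three exceptional types and each admissible $t$, the supports $\supp(\w)\subseteq S(X_1)$ matching one of the shapes above, compute the required coefficient pattern of $\w^\sharp=t\delta-\w$, and verify that it is never again a shape from \cref{lunad}. The main obstacle in this enumeration is the behavior at the trivalent branch node of $E_n$: its mark in $\delta$ is large while any simply-laced spherical root places coefficient at most $2$ there, and its three arms make the connected-support (or $A_1\times A_1$) condition hard to satisfy for $\w$ and $\w^\sharp$ simultaneously. A systematic split according to which arms at the branch node are traversed by $\supp(\w)$ reduces each $E_n^{(1)}$ to a short finite list of configurations, each of which is eliminated by direct inspection.
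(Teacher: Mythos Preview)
Your plan is correct and follows essentially the same route as the paper: bound $t$ via the marks of $\delta$ against the coefficient bound $\le 2$ on simply-laced spherical roots, then run a finite case analysis organized around the trivalent branch node. The paper sharpens the enumeration with one extra observation you do not make explicit: since the branch-node coefficient in $t\delta$ forces that node into the support of \emph{both} $\w$ and $\w^\sharp$, it lies in $S(X_1)\cap S(X_2)$ and hence by \cref{trick} in $S^p(\w)$, which immediately cuts the candidates for $\w$ down to three or four cases, each then killed by \cref{homlem1} rather than by matching $t\delta-\w$ against \cref{lunad} directly.
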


\begin{proof}
We remember that for a bi-homogeneous polytope generated by a weight $\w$, we have $\w+(\w^\sharp)=t\cdot \delta$ for some coefficient $t\in \R^\times$. As coefficients of simple roots in spherical roots are $\frac12,1,2,3$, $t$ must be chosen such that all coefficients of $t\delta$ are in $\N[\frac12]$ and $\le 4$. 

\begin{defn}
Let $k_(\aa_i)$ be the coefficient of $\aa_i$ in $\d$. For $\w$ the weight of a homogeneous variety of rank one, let $k_\w(\aa_i)$ be the coefficient of $\aa_i$ in $\w$, that is $\w=\sum k_\w(\aa_i) \cdot \aa_i$.
\end{defn}
We show the proof for $E_6^{(1)}$, the proof for the other cases is similar and can be found in \cite{Pau}.

For this case, the coefficients of simple roots in $\d$ are $1,2,3$. That means  $t\in\{\frac12, 1\}$.
 \begin{itemize}
  \item $t=\frac12$. Then $k(\aa_4)=\frac32$. As $E_6^{(1)}$ does not contain a subsystem of type $B_3$, it follows that w.l.o.g $k_\w(\aa_4)=1$ and $k_{\w^\sharp}(\aa_4)=\frac12$. That means that $\aa_4$ must be contained in $S(X_1)$ and $ S(X_2)$, meaning that $\aa_4$ is in $S^p$ by \cref{trick}. The remaining possibilities for $\w$ are the following (up to diagram automorphisms):
  \begin{itemize}

   \item $\w=\aa_1+2\aa_3+\aa_4$. 
   \item $\w=\aa_3+\aa_4+\aa_5$.
   \item $\w=\frac12(\aa_3+\aa_5+2\aa_4+2\aa_2)$
  \end{itemize}
  All are in contradiction to \cref{homlem1}
 \item $t=1$. Then $\w$ and $\w^\sharp$ must support $\aa_4$, w.l.o.g. $k_\w(\aa_4)=2$  and $k_{\w^\sharp}(\aa_4)=1$. This again forces $\aa_4\in S^p$. The only possibility for $\w$ is  $\w=\aa_3+\aa_5+2\aa_4+2\aa_2$ is in contradiction to \cref{homlem1}.
  \end{itemize}
So there are no bi-homogeneous polytopes for $E_6^{(1)}$.
\end{proof}

Finally, it remains to prove the theorem for $A_n^{(1)}$:

\begin{lem}\label{homlem5}
 We consider an affine root system of type $A_n^{(1)}$. If three or more roots are $\notin S(X_1)$, then there is no bi-homogeneous moment polytope. 
\end{lem}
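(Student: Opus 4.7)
The plan is to reduce to the case where both $\omega$ and $\omega^\sharp$ have coefficients in $\{0,1\}$, and then use the cyclic geometry of $A_n^{(1)}$ together with \cref{trick} to derive a contradiction from three missing roots.

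\textbf{Step 1: Pin down the scalar $t$.} Write $\omega^\sharp = t\delta - \omega$. Since $\delta=\alpha_0+\dots+\alpha_n$ has all coefficients equal to $1$, the coefficient of each simple root in $\omega + \omega^\sharp$ equals $t$. The spherical roots supported on type $A$ subsystems (cf.\ \cref{lunad}) are chains $\alpha_i+\dots+\alpha_j$, doubled nodes $2\alpha_i$, and half-sums $[\frac{1}{2}]\alpha+\alpha'$, so the coefficients of $\omega$ and $\omega^\sharp$ at each node lie in $\{0,\frac{1}{2},1,2\}$. A short case analysis rules out every $t\neq 1$: a coefficient $2$ on one side forces $t-2$ on the other, returning to the allowed list only as another $2\alpha_i$ and being incompatible with $\omega+\omega^\sharp=t\delta$ on a cycle of more than one node; a coefficient $\frac{1}{2}$ from a half-sum forces the complementary coefficients $t-\frac{1}{2}$ and $t-1$ to lie simultaneously in the allowed list, which fails; and the pattern $\alpha+\alpha'$ forces $\omega^\sharp$ to be supported on the complement of two non-adjacent nodes, which is disconnected unless $n=3$, where already $|S(X_1)|\le 1$ excludes three missing roots. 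This leaves $t=1$ with both $\omega$ and $\omega^\sharp$ of chain type with coefficients in $\{0,1\}$.

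\textbf{Step 2: Complementary arcs.} With $t=1$ and $\omega = \alpha_i+\dots+\alpha_j$ we get $\omega^\sharp = \alpha_{j+1}+\dots+\alpha_{i-1}$ read cyclically, so $\supp(\omega)$ and $\supp(\omega^\sharp)$ are complementary arcs of $S$. The three missing roots $\alpha_a,\alpha_b,\alpha_c\notin S(X_1)$ cannot meet $\supp(\omega)\subseteq S(X_1)$, so all three lie in $\supp(\omega^\sharp)$. Since $\omega^\sharp$ must be a local model at $X_2$, we need $\supp(\omega^\sharp)\subseteq S(X_2)$, and in particular $\alpha_a,\alpha_b,\alpha_c\in S(X_2)$.

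\textbf{Step 3: Counting via \cref{trick}.} Parts (3) and (4) of \cref{trick} imply that a root $\alpha_r\notin S(X_1)$ can lie in $S(X_2)$ only when $\langle\omega,\alpha_r^\vee\rangle\neq 0$. For the chain $\omega=\alpha_i+\dots+\alpha_j$, the only nodes outside $\supp(\omega)$ with this property are the two neighbors $\alpha_{i-1}$ and $\alpha_{j+1}$ of the chain's endpoints; every other outside node is orthogonal to $\omega$. Three missing roots cannot fit into this two-element set, contradicting Step 2 and concluding the proof.

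The main obstacle I expect is Step 1 — ruling out $t\neq 1$ and all non-chain shapes of $\omega$ — since it requires a finite but slightly delicate bookkeeping over the type $A$ spherical-root list in conjunction with the constraint $\omega+\omega^\sharp=t\delta$ over the whole cycle. Once $t=1$ is fixed and both roots are chains, the cyclic combinatorics in Steps 2 and 3 make the conclusion immediate.
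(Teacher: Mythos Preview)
Your approach differs from the paper's. The paper does not first pin down $t$ or reduce to chain-shaped weights; it runs a direct case analysis on the adjacency pattern of the three missing simple roots in the cycle (three consecutive; two consecutive plus one isolated; none consecutive) and, in each configuration, observes that for every admissible $\omega$ supported on $S(X_1)$ at least one missing root lies in $S^p(\omega)$, so by \cref{trick}(3) it cannot land in $S(X_2)$. Your Steps~2--3, by contrast, give a uniform two-slot counting argument once Step~1 is established --- cleaner, but entirely dependent on Step~1 being carried out.

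Step~1 as written has a genuine gap. Your enumeration of spherical roots on type-$A$ subsystems omits the $D_3\cong A_3$ entry $[\tfrac12](\alpha_i+2\alpha_{i+1}+\alpha_{i+2})$ of \cref{lunad} (the $\SL(4)/\SP(4)$ weight), which does occur for $A_n^{(1)}$ --- see case A3) and the row $S\setminus\{\alpha_0,\alpha_4\}$ in the paper's table --- and carries a middle coefficient $2$ (respectively $1$) not covered by your trichotomy ``chain, doubled node, half-sum''. The case can be eliminated (an $A_3$ support forces $|S(X_1)|\ge 3$, hence $n\ge 5$ under three missing roots, and then $\omega^\sharp=t\delta-\omega$ acquires a coefficient pattern matching no entry of \cref{lunad}), but it must be treated. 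More broadly, the coefficient bookkeeping you sketch is imprecise: for $\omega=\tfrac12(\alpha+\alpha')$ the only nonzero coefficient is $\tfrac12$, so no ``$t-1$'' constraint appears, and ruling it out really requires observing that $\supp\omega^\sharp$ is then a disconnected set not of shape $A_1\times A_1$ for $n\ge 4$ --- closer to the paper's use of \cref{homlem1} than to a pure coefficient count.
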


\begin{proof} We distinguish the following cases:
 \begin{enumerate}
  \item Assume there are three missing roots that are neighbors, without loss of generality we assume $\aa_k, \aa_{k+1}, \aa_{k+2} \notin S(X_1)$.  But then $\aa_{k+1}\in S^p(\w), \aa_{k+1}(X_1)>0$. Hence $\aa_{k+1}\notin  S(X_2)$ by \cref{trick}.
  \item Assume that three roots are missing, two of the missing roots $\aa_k,  \aa_{k+1}$ are neighbors, and a third one, $\aa_l$ is not $\aa_{k-1}$ or $\aa_{k+2}$. Then we have to consider:
  \begin{itemize}
   \item $\w$ has $\aa_k+2, \dots, \aa_{l-1}$ in its support. But then $\aa_{k}\notin  S(X_2)$ because $\aa_k(X_1)\ne 0, \aa_k\in S^p$ and \cref{trick}.
   \item $\w$ has $\aa_{l+1}, \dots, \aa_{k-1}$ in its support. But then $\aa_{k+1}\notin  S(X_2)$ because $\aa_{k+1}(X_1) \ne 0,  \aa_{k+1}\in S^p$ and \cref{trick}.
  \end{itemize}
     \item Assume none of the three missing roots are neighbors (and hence $n\ge 5$), we call them $\aa_k, \aa_l, \aa_m$ clockwise. We assume $\w$ has $\aa_{k+1},\dots, \aa_{l-1}$ in its support. But then $\aa_m\notin  S(X_2)$ because $\aa_l(X_1)\ne 0,  \aa_l\in S^p$ and \cref{trick}.
     If the three roots $\notin S(X_1)$ are w.l.o.g. $\aa_0, \aa_{2}, \aa_{4}$ and $n\ge 5$, then the roots not in the support of $\w$ are in two components of support $A_1\times A_{\ge 3}$ in $ S(X_2)$, and there is no spherical root with this support via \cref{lunad}.
 \end{enumerate}
\end{proof}

This finishes the proof of \cref{homthm}.

\section{Structure of bi-inhomogeneous and mixed polytopes}

Fortunately, we can  find an even stronger necessary condition for the cases of bi-inhomogeneous and mixed polytopes.

Remember that we ask $\w$ to touch every wall of the alcove at least once.

We want to prove the following theorem which is an inhomogeneous analogon to \cref{trick}:
\begin{thm}[Structure of inhomogeneous polytopes]\label{inthm} 
 Let $\w$ be an inhomogeneous local model for a genuine quasi-Hamiltonian manifold of rank one for some $X_1 \in \bar \A_0$, and $\w\in I_k(S(X_1))$, meaning $\la \w, \aa_k^\vee \ra=1$.  Then:\\
 The local root system $ S(X_2):=\Phi(X_1+c\w)$ is 
 \[
  \Phi(X_2)=S \setminus \{\a_k\}
 \]
\end{thm}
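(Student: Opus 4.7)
The plan is to combine \cref{trick} with the genuine hypothesis to determine exactly which simple affine roots vanish at $X_2$. By \cref{inh1}, an inhomogeneous local model $\w\in I_k(S(X_1))$ satisfies $\la \w,\aa^\vee\ra=0$ for every $\a\in S(X_1)\setminus\{\a_k\}$ and $\la \w,\aa_k^\vee\ra=1$; in particular $S^p(\w)\cap S(X_1)=S(X_1)\setminus\{\a_k\}$. Moreover, $\w$ is dominant for the local root system at $X_1$, so from $X_1$ the polytope extends in the direction $+\w$, and by the proposition at the beginning of this part we may write $X_2=X_1+c\w$ for some $c>0$.

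Now I apply \cref{trick}. For each $\a\in S(X_1)\setminus\{\a_k\}$ part~(1) gives $\a\in\Phi(X_2)$, while for $\a_k$ part~(2) gives $\a_k\notin\Phi(X_2)$, since
\[
\a_k(X_2)=\a_k(X_1)+c\la \w,\aa_k^\vee\ra=0+c\cdot1=c>0.
\]
This settles the simple roots lying in $S(X_1)$.

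For the remaining simple roots $\a\in S\setminus S(X_1)$, we have $\a(X_1)>0$, and since $\P\subseteq\bar\A$ the affine-linear restriction $\a|_\P$ is non-negative throughout $\P$. Because the manifold is genuine, $\P$ must meet the wall $\{\a=0\}$; as $\a|_\P$ is affine with $\a(X_1)>0$, the only possibility is that it vanishes at the opposite endpoint, forcing $\a(X_2)=0$ and hence $\a\in\Phi(X_2)$. Collecting the three cases, the simple roots vanishing at $X_2$ are exactly $(S(X_1)\setminus\{\a_k\})\cup(S\setminus S(X_1))=S\setminus\{\a_k\}$, which yields $\Phi(X_2)=S\setminus\{\a_k\}$ as claimed.

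The main subtlety I expect is justifying that the segment emanates from $X_1$ in the $+\w$ direction (so that $c>0$ and part~(2) of \cref{trick} applies with the correct sign on $\a_k(X_2)$); this is the only step that genuinely uses the dominance of $\w$ for $K(X_1)$, whereas the rest of the argument is a bookkeeping application of \cref{trick} combined with the defining property of a genuine polytope.
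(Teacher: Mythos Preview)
Your proof is correct and, for the theorem as stated, arguably more direct than the paper's. Both arguments handle the roots in $S(X_1)$ identically via \cref{trick} and \cref{inh1}: the roots $\alpha\in S(X_1)\setminus\{\alpha_k\}$ persist in $S(X_2)$ by part~(1), while $\alpha_k$ drops out by part~(2). The difference lies in how the roots $\alpha\in S\setminus S(X_1)$ are treated.

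You use the genuine hypothesis as an \emph{assumption}: since $\alpha|_{\P}$ is affine, non-negative on $\P\subseteq\bar\A$, positive at $X_1$, and must vanish somewhere on $\P$, it is forced to vanish at the other endpoint $X_2$. This is clean and requires nothing beyond convexity and the definition of ``genuine''. The paper instead treats the remaining roots by a parameter count: it observes that an inhomogeneous model on $S(X_1)$ leaves exactly $n-|S(X_1)|$ free coordinates in $X_1$, and together with the scalar $c$ this gives precisely as many free parameters as there are equations $\alpha(X_2)=0$ for $\alpha\notin S(X_1)$; linear independence of simple roots then guarantees a unique solution. In other words, the paper is simultaneously proving that a genuine polytope with inhomogeneous model at $X_1$ \emph{exists and is unique}, not merely that $S(X_2)=S\setminus\{\alpha_k\}$ when one is given. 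This extra information is what underlies the remark following \cref{incor1} that ``there are no free parameters'' and justifies writing $\w=I_k(j)$ rather than $\w\in I_k(j)$. Your argument yields the stated conclusion more transparently but does not recover this uniqueness by itself.
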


\begin{proof}

Choose some subset $S(X_1)\subset S$ such that there is an inhomogeneous local model for $S(X_1)$. After renumbering, we can assume that $S(X_1)=\{\a_0, \dots, \a_p\}$ for some $p<n$.  We recognize that there are $n-p-1$ free coordinates in $\w$. We also know from \cref{inh1} that exactly one $\a_k(\w)>0$, we can assume $k=p$. So, $\a_i(\w)=0$ for $i=0, \dots, p-1$ and $\a_p(\w)>0$. We consider now the point $X_2=X_1+c\w$ for some positive real number $c$, the second point where $\P$ intersects the walls of the alcove. We have by  \cref{afflin}:
\[
 \a_i(X_2)=\a_i(X_1)+c\cdot \aa_i(\w)
\]
By construction, $\a_i(X_1)=0$ for $i\in\{0,...,p\}$. But $\aa_i(\w)$ is also zero for $i=0, \dots, p-1$. Furthermore, $\aa_p(\w)>0$. It follows:
\begin{equation}
 \a_0, \dots, \a_{p-1} \in  S(X_2), \a_p \notin  S(X_2)
\end{equation}

 We want that $\w$ touches every wall of the alcove at least once. It remains to prove that we can always find an $\w$ that leads to an $X_2$ such that all simple roots not in $S(X_1)$ are in $ S(X_2)$, that is that our $\P$ is genuine:   
 
 Recall that, by using the relations of coordinates given by $\a_0, \dots, \a_{p}$, we defined $p+1$ coordinates of $X_1$, $n-p-1$ coordinates of $X_1$ are still free. So we have $n-p-1+1=n-p$ free coordinates in $X_2$, because by going from $X_1$ to $X_2=X_1+c\w$, we get one additional parameter $c$. We have $n+1-p+1=n-p$ equations to solve, coming from the roots $\a_{p+1}, \dots, \a_n$. So, in general, we face an (in general inhomogeneous) system of $n-p$ equations in $n-p$ unknowns. Because the simple roots are linearly independent, this system has full rank and it is well known that it is always solvable with exactly one solution. So, it is always possible to have all roots $\a_{p+1}, \dots, \a_n$ in $ S(X_2)$. 
 Concluding, we have seen that $ S(X_2)$ is $S\setminus\{\a_k\}$: All simple roots supported in $\w$ w but $\a_k$ are still in $S(X_2)$, and all roots not in $S(X_1)$ are in $S(X_2)$.

\end{proof}

\begin{cor}[bi-inhomogeneous polytopes]\label{incor1} 
Bi-inhomogeneous polytopes are only possible if $S(X_1)=S\setminus\{\a_j\}$ and $ S(X_2):=S \setminus\{\a_k\}$ each contain $n$ roots. If $\w\in I_k(j)$, meaning $\la\w, \a_k^\vee\ra=1$ for $\a_k\in S(X_1)$, then $ S(X_2)=
S\setminus \{\a_k\}$ and there we must have $-\w\in I_j(k)$, and vice versa.
\end{cor}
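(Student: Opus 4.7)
The plan is to derive the corollary as an essentially immediate, symmetric application of the preceding Theorem on the structure of inhomogeneous polytopes. The key observation is that in a bi-inhomogeneous situation, the hypothesis of that theorem is satisfied at \emph{both} endpoints $X_1$ and $X_2$ of the moment polytope, so applying the theorem in both directions will pin down $S(X_1)$ and $S(X_2)$.

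First, I would apply the Theorem directly to $\w$ at the point $X_1$: since $\w \in I_k(j)$ means $\w$ is an inhomogeneous local model at $X_1$ with $\la \w, \aa_k^\vee \ra = 1$, the theorem yields $S(X_2) = S \setminus \{\a_k\}$, which already establishes one half of the claim and forces $|S(X_2)| = n$. Second, by the bi-inhomogeneity hypothesis, $-\w$ is an inhomogeneous local model at $X_2$. Invoking \cref{inh1}, every inhomogeneous local model of rank one is a fundamental weight paired to $1$ with exactly one simple coroot of the supporting $A$- or $C$-type chain, so there is a unique index $j$ with $\la -\w, \aa_j^\vee \ra = 1$; this shows $-\w \in I_j(k)$.

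Next, I would apply the Theorem a second time, now to the inhomogeneous local model $-\w$ at $X_2$, noting that $X_1 = X_2 + c'(-\w)$ for some $c' > 0$ so that the roles of the two endpoints are simply interchanged along the line segment $[X_1 X_2]$. This yields $S(X_1) = S \setminus \{\a_j\}$, finishing the structural statement that both $S(X_1)$ and $S(X_2)$ consist of exactly $n$ simple roots. The ``vice versa'' assertion then follows because the entire argument is symmetric in $(X_1, X_2, \w)$ and $(X_2, X_1, -\w)$.

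I do not expect a genuine obstacle here; the only point that deserves a moment's care is verifying that the Theorem indeed applies with the roles of the endpoints reversed, which amounts to checking that traveling along the line from $X_2$ in the direction $-\w$ lands on $X_1$ with a positive scalar --- a straightforward consequence of the parametrization $\P = [X_1 X_2] = (X_1 + \R\w) \cap \bar\A$.
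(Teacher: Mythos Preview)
Your proposal is correct and follows essentially the same route as the paper's proof: both invoke \cref{inthm} at $X_1$ to obtain $S(X_2)=S\setminus\{\a_k\}$ and then exploit the symmetry of the bi-inhomogeneous setup to handle $X_1$. The only cosmetic difference is that the paper pins down the index on which $-\w$ pairs to $1$ by the direct observation that $\w$ already vanishes on every simple coroot in $S(X_2)$ except $\aa_j^\vee$, whereas you recover this index via a second application of \cref{inthm} from $X_2$; just be careful in your write-up to name that index $j'$ first and then identify $j'=j$ from $S(X_1)=S\setminus\{\a_j\}$.
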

\begin{proof}
 This is an immediate consequence of \cref{inthm}. Starting in without loss of generality $X_1$ with an inhomogeneous $\w$, $X_2$ is n-elemental and by the proof of the \cref{inthm}, $\a_k$ is not in $\Phi(X_2)$. As $\w$ takes the value zero on all roots of $ S(X_2)$ but $\a_j$, a bi-inhomogeneous polytope has only a chance to take value 1 there. 
\end{proof}
\begin{rem}
This means: if $\P=[X_1X_2]$ is a bi-inhomogeneous momentum polytope, then $X_1$ and $X_2$ are vertices of $\bar \A$.
\end{rem}

It also follows: in case of a bi-inhomogeneous polytope, $\w$ is completely determined by the root $\a_k$ with $\la \w, \aa_k^\vee\ra=1$ and the one simple root not contained in the local root system $S(X_1)$. There are no free parameters! Hence instead of writing $\w\in I_k(j)$, we can and will write $\w=I_k(j)$.

The fact $|S(X_1)|=| S(X_2)|=n$ leads to a strong tool for finding bi-inhomogeneous polytopes: It is necessary that the root system has two n-elemental subsets that allow the existence of inhomogeneous local models,  meaning that there are components supporting $n$ roots and have one connected component of type $A_l$ or $C_l$. We also know from \cref{incor1} which roots have to be or cannot be in the one or the other. That significantly reduces the number of cases remaining to check. 

We also find a necessary condition for mixed polytopes:
\begin{cor}[mixed momentum polytopes]\label{mixcor2} 
 Mixed momentum polytopes (homogeneous in $X_1$, inhomogeneous in $X_2$) can only exist if $|S(X_1)|=n$.
\end{cor}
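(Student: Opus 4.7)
The plan is to reduce this directly to \cref{inthm} by swapping the roles of the two endpoints. In a mixed polytope $\P=[X_1X_2]$, the endpoint $X_2$ carries an inhomogeneous local model whose weight monoid is generated by $-\w$. Since the cone spanned by $-\w$ at $X_2$ must coincide (as a half-line) with the cone spanned by $\w$ at $X_1$, we have $X_1=X_2+c(-\w)$ for some $c>0$. That means the pair $(X_2,-\w)$ satisfies exactly the hypothesis of \cref{inthm}, just with the labels of the two endpoints swapped compared to the original statement.

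Now apply \cref{inthm} to this configuration: let $\a_k\in S(X_2)$ be the unique simple root in $S(X_2)$ with $\langle -\w,\aa_k^\vee\rangle=1$, which exists by \cref{inh1} since $-\w$ is inhomogeneous at $X_2$. The conclusion of \cref{inthm} is that the local root system at the other endpoint is forced to be $S\setminus\{\a_k\}$. But the other endpoint here is $X_1$, so
\[
 S(X_1) = S\setminus\{\a_k\},
\]
which has cardinality $n$ as required.

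There is no real obstacle here — the only thing to notice is that \cref{inthm} was phrased as going from $X_1$ to $X_2$, but its proof is purely local at the endpoint that carries the inhomogeneous model and uses only the affine-linear relation $\a(X_1+c\w)=\a(X_1)+c\langle\w,\a^\vee\rangle$, which is manifestly symmetric in the two endpoints. Hence the theorem applies verbatim with $X_1$ and $X_2$ interchanged, and the corollary follows at once. In particular, as in the bi-inhomogeneous case (\cref{incor1}), this already tells us that in a mixed polytope the inhomogeneous endpoint $X_2$ determines a vertex-like constraint on $X_1$: one simple root (namely $\a_k$) is the \emph{unique} simple root excluded from $S(X_1)$, so $X_1$ lies on the codimension-one face of $\bar\A$ cut out by $\a_k$.
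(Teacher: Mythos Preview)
Your proof is correct and follows essentially the same approach as the paper: both apply \cref{inthm} with the roles of $X_1$ and $X_2$ swapped, using that the inhomogeneous local model sits at $X_2$. The paper states this in a single line, while you spell out the symmetry argument in more detail, but the content is identical.
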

\begin{proof}
 Let $\w$ be an inhomogeneous local model in $X_2$. Then $|S(X_1)|=n$ by \cref{inthm}.
\end{proof}

\section{Explicit determination of moment polytopes of rank one}

We first sum up the last three sections:

\begin{thm}\label{qhamnot}
For every bi-homogeneous moment polytope $[X_1X_2]$ of any compact genuine quasi-Hamiltonian $K$-manifold of rank one, $S(X_1)$ and $ S(X_2)$ contain at least $n-1$ roots ($n$ denotes the rank of $K$). For every mixed momentum polytope with homogeneous local model in $X_1$, the local root system $S(X_1)$ contains $n$ roots. For every bi-homogeneous moment polytope, we have $|S(X_1)|=|S(X_2)|=n$.
\end{thm}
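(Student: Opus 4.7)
The plan is to recognize this as a collecting theorem that simply assembles the structural results established across the three preceding sections; no new argument is required beyond invoking the appropriate reference for each clause. Since the statement breaks cleanly into three independent assertions (bi-homogeneous, mixed, and what must be read in light of \cref{incor1} as bi-inhomogeneous), I would handle them one at a time.

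First, for the bi-homogeneous clause asserting $|S(X_1)|, |S(X_2)| \ge n-1$, I would cite \cref{homthm} directly. Its proof was organized by affine root system type: the exceptional series $E_6^{(1)}, E_7^{(1)}, E_8^{(1)}$ were handled by \cref{homlem3} via the small-coefficient constraint on $t\delta-\w$; the type $A_n^{(1)}$ case was handled by \cref{homlem5} via the $S^p$-incompatibility arguments of \cref{trick}; and the remaining classical types were covered by the case-by-case analysis built on \cref{trick} and \cref{homlem1}. So the first clause requires nothing more than quoting \cref{homthm}.

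Second, for the mixed clause, I would appeal to \cref{mixcor2}: if the moment polytope carries a homogeneous model in $X_1$ and an inhomogeneous one in $X_2$, then applying \cref{inthm} with the roles of the two endpoints reversed immediately gives $|S(X_1)|=n$. Third, for the bi-inhomogeneous clause, the argument is to apply \cref{inthm} at both endpoints simultaneously: an inhomogeneous model in $X_1$ forces $S(X_2) = S\setminus\{\a_k\}$ and therefore $|S(X_2)|=n$, and symmetrically $|S(X_1)|=n$; this is precisely the content of \cref{incor1}.

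I do not anticipate any real obstacle, since the theorem is a summary statement and the substantive work has already been discharged in the three preceding sections. The only thing worth explicitly pointing out in the write-up would be that the three clauses are mutually exclusive (they correspond to the trichotomy introduced right after the bi-homogeneous/bi-inhomogeneous/mixed definition), so that no compatibility issues can arise when assembling the statement.
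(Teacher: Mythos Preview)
Your proposal is correct and matches the paper's own proof essentially verbatim: the paper's proof is the single sentence ``This is the combination of \cref{homthm}, \cref{incor1} and \cref{mixcor2},'' and you have correctly identified which result covers which clause, including the observation that the third clause's ``bi-homogeneous'' is a typo for ``bi-inhomogeneous'' (as confirmed by the citation of \cref{incor1}).
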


\begin{proof}
This is the combination of \cref{homthm}, \cref{incor1} and \cref{mixcor2}.
\end{proof}

In general, for a fixed local root system $\Phi_X$, there are only finitely many local models possible, namely the homogeneous spherical varieties found in \cite{Akh83} or the spherical modules of rank one for $A$ and $C$. So which local models are candidates in one vertex of the alcove is determined by the local root systems and the known classification of smooth affine spherical varieties, we discussed this in detail in \cref{rank1model}.

We shall establish a short notation for going through our algorithm. We will use the following tables:

\begin{center}
\begin{longtable}{|c|c|c|c|c|}
$S(X_1)$ & choice of $\w$ & $ S(X_2)$ & $\w^\sharp$ hom.? & $-w$ inhom.?\\\endhead
\end{longtable}
\end{center}

In the first column, we choose the subsystem we shall start with as a local root system in one ``vertex'' of our polytope by denoting its simple roots. That corresponds to some wall of the alcove. Then we denote all possible choices we have for the local model there in column two where we write $I_k$ for an inhomogeneous model being $1$ on the root $\a_k$ and zero elsewhere. The next column gives the local root system at the other end of our polytope, in $X_2$ which is easy to see via \cref{trick}, \cref{incor1} or elementary calculations.

We will not explicitly mention cases that only differ by a diagram automorphism.

As the case-by-case-studies are pretty similar for all affine root systems, we shall only discuss two instructive cases in detail. We refer to \cite{Pau} for a detailed study of all affine root systems.

\subsection{$A_n^{(1)}$}
Note that in $A_n^{(1)}$, all subsystems of $n$ roots only differ by a diagram automorphism. We will choose $S\setminus\{a_0\}$ as a representative for our examination.

As this root system has the structure of a circle, all sums below have to be read ``modulo $n+1$''.

\begin{center}
\tiny{
\begin{longtable}{|p{2.5cm}|p{3cm}|p{2cm}|p{3cm}|p{3cm}|}
\hline
$S(X_1)$ & choice of $\w$ & $ S(X_2)$ & $\w^\sharp$ hom.? & $-w$ inhom.?\\\endhead
\hline
\multicolumn{5}{|c|}{$n=1$} \\
\hline
$\a_1$  & $I_1$ & $\a_0$ &  & $-\w=I_0$\\
\hline
 & $\aa_1$ & $\a_0$ & $\w^\sharp=\a_0$ & \\
 \hline
 & $2\aa_1$ & $\a_0$ & $\w^\sharp=2\aa_0$ & \\
\hline
\multicolumn{5}{|c|}{$n\ge 2$} \\
\hline
$S\setminus \{\a_0\}$  & $\a_{1,n}$ & $S\setminus\{\a_1, \a_n\}$ &$\w^\sharp=\aa_0$  & \\
\hline
  & $I_1$ & $S\setminus\{\a_1\}$ & &$-\w=I_0$   \\
\hline
$S\setminus \{\a_0\}, n=3$  & $[\frac12]\aa_1+2\aa_2+\aa_3$ & $S\setminus\{\a_2\}$ &$\w^\sharp=[\frac 12]\aa_3+2\aa_0+\aa_1$  & \\
\hline
$S\setminus\{\aa_1,\aa_n\}$ & $2\aa_0$ & $S\setminus\{\a_0\}$ & no: $\w^\sharp=2\a_{1,n}$, \cref{lunad} & no: $\la -\w, \aa_1^\vee\ra = \la-\w, \aa_n^\vee\ra>0$, \cref{inh1}\\
\hline
$S\setminus \{\a_d, \a_e\};\linebreak d\le e$ & $\aa_{d+1,e-1}$ & $S\setminus \{\a_{d+1}, \a_{e-1}\}$& $\w^\sharp=\a_{e,d}$ & \\
\hline
$S\setminus\{\a_0, \a_4\}$, $n\ge 4$ & $[\frac 12] \aa_1+2\aa_2+\aa_3$ & $S \setminus \{\a_2\}$ & no:  $\w^\sharp=[\frac 12] 2\aa_0+\aa_1+\aa_3+2\aa_4+\dots+2\aa_n$, \cref{lunad} & no:$\la-\w,\a_3^\vee\ra=0,\linebreak \la -\w, \aa_4^\vee\ra=[\frac 12]1$, \cref{inh1}\\
\hline
$S\setminus\{\a_1,\a_3\}, n=3$ & $[\frac 12] \aa_1+\aa_3$ & $\a_2, \a_0$ & $\w^\sharp=[\frac 12] \aa_2+\aa_0$ & \\
\hline
$S\setminus\{\a_1,\a_3\}, n>3$ & $[\frac 12] \aa_1+\aa_3$ & $\ssm{1}{3}$ & no: $\w^\sharp=[\frac 12] \aa_0+\aa_{2,n}$, \cref{lunad} & no: $\Sp{2}>0, \Sp{3}>0$, \cref{inh1} \\
\hline
\end{longtable}
}
\end{center}

\subsection{$D_n^{(1)}, n\ge 4$}

We first argue that there are no bi-inhomogeneous polytopes.

\begin{itemize}
 \item $S(X_1)=S\setminus\{\a_0\}$ or another root at the end. Then $S(X_1)$ is of type $D$, where no inhomogeneous model is possible via \cref{inh1}.
 \item $S(X_1)=S\setminus\{\a_2\}$. Then $\w=I_0$ (or $w=I_1$,or if $n=4$: $\w=I_3$ or $\w=I_4$) is possible. But then, $ S(X_2)=S\setminus\{\a_0\}$ is of type $D$.
 \item $S(X_1)=S\setminus\{\a_d\}, 2<d<n-1$. But then $S(X_1)$ is of type $D_l\times D_k$ for some $l,k$.
\end{itemize}

Now let us continue for bi-homogeneous and mixed polytopes.
\begin{center}
\tiny{
\begin{longtable}{|p{2.5cm}|p{3cm}|p{2cm}|p{3cm}|p{3cm}|}
\hline
$S(X_1)$ & choice of $\w$ & $ S(X_2)$ & $\w^\sharp$ hom.? & $-w$ inhom.?\\\endhead
\hline
\multicolumn{5}{|c|}{$n=4$} \\
\hline
$\Ssm\{\a_0\}$ & $[\frac12]2\aa_3+2\aa_2+\aa_4+\aa_1$ & $\Ssm\{\a_3\}$ & $\w^\sharp=[\frac12]2\aa_0+2\aa_2+\aa_1+\aa_3$ & \\
\hline
$\Ssm\{\a_0,\a_1\}$ & $ [\frac12]\aa_4+2\aa_2+\aa_3$ & $\Ssm\{\a_2\}$ & $\w^\sharp=[\frac12]\aa_0+\aa_1$ & \\
\hline
& $\aa_4+\aa_2+\aa_4$ & $\Ssm\{\a_3,\a_4\}$ & $\w^\sharp=\aa_0+\aa_2+\aa_1$ & \\
\hline
$\Ssm\{\a_0,\a_2\}$ &$[2]\aa_1$ & elementary: never contains both missing roots & & \\
 \hline
$\Ssm\{\a_0,\a_2\}$ &$[\frac12]\aa_1+\aa_4$ & elementary: never contains both missing roots & & \\
\hline
$\Ssm\{\a_2\}$ & $[\frac12]\aa_0+\aa_1$ & $\Ssm\{\a_0,\a_2\}$ & already seen: $\w^\sharp=[\frac12]\aa_4+2\aa_2+\aa_3$ & \\
\hline
 & $[2]\aa_1$ & $\Ssm\{\a_1\}$ & no: $\w^\sharp=[2]\aa_0+2\aa_2+\aa_3+\aa_4$, \cref{lunad} & no: System of type $D$, \cref{inh1}\\
 \hline
\multicolumn{5}{|c|}{$n\ge 5$} \\
\hline
$\Ssm\{\a_0\}$ & $[\frac12]2\aa_{1,n-2}+\aa_{n-1}+\aa_n$ & $\Ssm\{\a_1\}$ & $\w^\sharp=[\frac12]2\aa_0+2\aa_{2,n-2}+\aa_{n-1}+\aa_n$ & \\
\hline
$\Ssm\{\a_0,\a_1\}$ & $[\frac12]2\aa_{2, n-2}+\aa_{n-1}+\aa_n$ & $\Ssm\{\a_2\}$ & $\w^\sharp=[\frac12]\aa_0+\aa_2$ & \\
\hline
$\Ssm\{\a_0,\a_2\}$ & $[2]\aa_1$ & $\a_0\notin  S(X_2)$, \cref{trick} & & \\
\hline
& $\w \in C(n)$ & $\a_0\notin  S(X_2)$, \cref{trick} & & \\
\hline

$\Ssm\{\a_0, \a_d\}, 2<d<n-1$ & $\aa_{1,d-1}$ & $\Ssm\{\a_1,\a_{d-1}\}$ & no: $\w^\sharp=\aa_0+2\aa_{2,n-2}+\aa_{n-1}+\aa_n$, \cref{lunad} &no: $\Sp{0}>0, \Sp{d}>0$, \cref{mixcor2}\\
\hline
$d=4$ & $[\frac12]\aa_1+2\aa_2+\aa_3$ & $\Ssm\{\a_2\}$ & no: $\S(X_2)$ neither connected nor $A_1\times A_1$, \cref{homlem1} & no: $\Sp{0}>0,\Sp{4}>0$, \cref{mixcor2}\\
\hline
& $\w \in C(n)$ & $\a_0\notin S(X_2)$ \cref{trick} & & \\
\hline

$\Ssm\{\a_0,\a_n\}$ & $\aa_{1,n-1}$ & $\Ssm\{\a_1,\a_{n-1}\}$ & $\w^\sharp=\aa_0+\aa_{2,n-2}+\aa_{n}$ & \\
\hline

$S\setminus\{\a_d\}$ with w.l.o.g $2\le d \le n/2$ & $[\frac12]\aa_0+\aa_1+2\aa_{2,d-1}$ & $\Ssm\{\a_{d-1}\}$ & $\w^\sharp=[\frac12]2\aa_{d,n-2}+\aa_{n-1}+\aa_n$ & \\

\hline$\d-\w$ neither connected nor $A_1\times A_1$
& $[\frac12]2\aa_{d+1,n-2}+\aa_{n-1}+\aa_n$ & $\Ssm\{\a_{d+1}$ & $\w^\sharp=[\frac12]\aa_0+\aa_1+2\aa_{2,d}$ & \\
\hline
$d=2$ & $[2]\aa_1$ & $\Ssm\{\a_1\}$ & no: $\w^\sharp=[2]\aa_0+2\aa_{2,n-2}+\aa_{n-1}+\aa_n$, \cref{lunad} & no: $\la-\w,\aa_3^\vee\ra=1$, \cref{inh1}\\
\hline

$\ssm{d}{e}, 1<d,d+1< e<n-1$ & $\w\in C(0)$ & $\a_e\notin S(X_2)$& &\\
\hline
& $\w\in C(n)$ & $\a_d\notin S(X_2)$ \cref{trick}& &\\
\hline
&$\w\in C(d+1)$ & & no: \cref{homlem1} & no: \cref{mixcor2} \\
\hline 
$\ssm{d}{d+1}, 1<d<n-2$ & $\w\in C(0)$ & $\a_{d+1}\notin S(X_2)$, \cref{trick}& &\\
\hline
& $\w\in C(n)$ & $\a_d\notin  S(X_2)$, \cref{trick} & &\\
\hline

$\ssm{2}{n}$ & $\w\in C(0)$ or $\w\in C(0)+C(1)$ & $\a_n\notin  S(X_2)$ & & \\
\hline
& $\w\in C(3)$ &  & no: $\w^\sharp$ not $\in$ \cref{lunad} & no: $\Sp{2}>0, \Sp{n}<0$, \cref{mixcor2} \\
\hline
\end{longtable}
}
\end{center}

\section{List of genuine Moment Polytopes of Rank one}

\begin{thm}[Momentum Polytopes of Rank one] 
 A compact multiplicity free genuine quasi-Hamiltonian $K$-manifold of rank one for $K$ simple corresponds to a spherical pair $(\P,\L_S)$ with the following properties:
 \begin{itemize}
\item $\P$ is a line segment $[X_1X_2]$ that is spanned as a line by $X_1+\R\w$ for some $\w$ found in \cref{ListHom} and $X_1$ such that $S(X_1)$ is the corresponding local root system.
\item We have $\a(X_1)=0$ or $\a(X_2)=0$ for every simple root $\a_0, \a_1, \dots, \a_n$
\item The lattice $\L_S$ is generated by this $\w$
\end{itemize}

\end{thm}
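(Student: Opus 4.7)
The plan is to combine Knop's bijection theorem (Theorem after Definition \ref{sphpair}) with the structural results proved in the preceding sections, and then appeal to the exhaustive case analysis already tabulated. Since every multiplicity free quasi-Hamiltonian $K\tau$-manifold of rank one corresponds bijectively to a spherical pair $(\mathcal{P}, \Lambda_S)$, it suffices to classify which such pairs can arise when $\mathcal{P}$ has rank one and touches every wall of the alcove. By the Proposition in Part III (the one just after the list of distinguished properties), the rank-one condition forces $\mathcal{P} = (X_1 + \mathbb{R}\omega) \cap \bar{\mathcal{A}}$ where $\omega$ generates the weight monoid $\Lambda_S$ of a local model in $X_1$; this already gives items (1) and (3) of the theorem up to identifying the admissible $\omega$.

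Next I would reduce the classification to the three mutually exclusive types (bi-homogeneous, bi-inhomogeneous, mixed) and apply the necessary conditions from Theorem \ref{qhamnot}: for bi-homogeneous polytopes one has $|S(X_1)|, |S(X_2)| \ge n-1$, for bi-inhomogeneous one has $|S(X_1)| = |S(X_2)| = n$ with $\omega$ completely determined by the two missing simple roots via Corollary \ref{incor1}, and for mixed polytopes the homogeneous endpoint satisfies $|S(X_1)| = n$ by Corollary \ref{mixcor2}. These constraints cut down the candidate pairs $(S(X_1), \omega)$ to a manageable finite list, since for each admissible local root system the possible local models $\omega$ are completely determined by Lemma \ref{rank1var}, List \ref{lunad} and Lemma \ref{inh1}.

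For each affine Dynkin type $A_n^{(1)}, B_n^{(1)}, C_n^{(1)}, D_n^{(1)}, E_6^{(1)}, E_7^{(1)}, E_8^{(1)}, F_4^{(1)}, G_2^{(1)}$ together with the twisted types, I would then run the tabulated algorithm already illustrated for $A_n^{(1)}$ and $D_n^{(1)}$: fix a subset $S(X_1) \subseteq S$ of sufficient cardinality, enumerate all compatible local models $\omega$ from List \ref{lunad} and Lemma \ref{inh1}, use Lemma \ref{trick} (in the homogeneous case) or Theorem \ref{inthm} (in the inhomogeneous case) to compute $S(X_2)$, and check whether $\omega^\sharp := t\delta - \omega$ or $-\omega$ is a legitimate local model in $X_2$. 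Each surviving row yields an entry of List \ref{ListHom}, and discarded rows are ruled out by the explicit obstruction cited in the last two columns of the tables (typically a violation of List \ref{lunad}, Lemma \ref{inh1}, Lemma \ref{homlem1}, or the structural restrictions of Corollaries \ref{incor1} and \ref{mixcor2}). The remaining affine types are handled identically, with details deferred to \cite{Pau}.

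The main obstacle is purely bookkeeping: making sure the case analysis is truly exhaustive up to diagram automorphisms and that in every discarded row the invoked obstruction really applies. In particular one must verify that the ``genuineness'' condition (every simple root vanishes on $X_1$ or $X_2$) is compatible with the linear system $\alpha_i(X_2) = \alpha_i(X_1) + c\,\alpha_i(\omega)$; for inhomogeneous $\omega$ this is settled once and for all by the dimension count in the proof of Theorem \ref{inthm}, while for homogeneous $\omega$ one reads off the second endpoint from $S^p(\omega)$ and the structure of $\delta - t\omega$. Once every type has been traversed, collecting the surviving $(S(X_1), \omega)$ pairs produces exactly the list, completing the proof.
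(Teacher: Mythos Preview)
Your proposal is correct and follows essentially the same approach as the paper: reduce via Knop's bijection to classifying spherical pairs, split into the bi-homogeneous, bi-inhomogeneous and mixed cases, apply the cardinality constraints of Theorem~\ref{qhamnot} (together with Corollaries~\ref{incor1} and~\ref{mixcor2}) to bound $|S(X_i)|$, and then exhaust the finitely many candidates for each affine type using Lemma~\ref{trick}, Theorem~\ref{inthm}, List~\ref{lunad} and Lemma~\ref{inh1}. The paper does not present a separate proof of this theorem beyond exactly this accumulated case analysis, so your outline matches it.
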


All parameters are assumed to be non-negative integers here. We mark inhomogeneous local models by printing them in {\bf bold font}. A $[2]$ in front of a momentum polytope says that not only $\w$ and $\w^\sharp$, but also $2\w$ and $2 \w^\sharp$ is possible. In the "local model"-columns we list the primitive spherical triple, containing the information about the semisimple type of the model. Note that we will only note the components of the local model with non-trivial operation.\\

We also investigated the operation of the connected center that could appear in the cases where the diagram of the primitive triples shows an asterisk. If these $\tt^1$ acted via a scalar $N$, we put an $\C^k_N$ in the $V$-entry of the triple. We omit this if $N=1$ which stands for the "standard" operation.
To find this scalar $N$, we determine a linearly independent set of primitive $\zeta\in\Z\S$ corresponding to the center of the Levi of the local model. It is defined by the fact that its elements are perpendicular to all simple roots in the local root system. Then it is well known that $N=\la \w, \zeta^\vee\ra$.\\
Remarkably, it turned out that $N=1$ for all genuine quasi-Hamiltonian Manifolds of rank one. We shall see that this is not true for those quasi-Hamiltonians of rank one that are not genuine in the next part.

\pagebreak

\begin{lis}\label{ListHom} 
\begin{center}
List of quasi-Hamiltonian manifolds of rank one
\tiny{
\begin{longtable}{|p{0.7cm}|p{2cm}|p{3.3cm}|p{2cm}|p{3.3cm}||p{2cm}|}
\hline
&$\w$ &local model & $\w^\sharp$ &local model & remarks\\\endhead
\hline
\multicolumn{6}{|c|}{$A_1^{(1)}$} \\
\hline
A1)&$2\aa_0 \in H(1)$ & $(\sll(2),0,0)$ & $2\aa_1\in H(0)$ &$(\sll(2),0,0)$ & \\
\hline
\multicolumn{6}{|c|}{$A_3^{(1)}$} \\
\hline
A2)&$[\frac12](\aa_1+\aa_3) \in H(0,2)$&$(\sll(2)\times\sll(2),\Delta\sll(2),0)$ & $[\frac 12](\aa_0+\aa_2) \in H(1,3)$&$(\sll(2)\times\sll(2),\Delta\sll(2),0)$ & \\
\hline
A3)&$[2]\frac12(\aa_i+2\aa_{i+1}+\aa_{i+2}) \in H(0)$ & $(\sll(4),\sp(4),0)$ & $[2]\frac12(\aa_{i+2}+2\aa_{i+3}+\aa_{i})\in H(2)$&$(\sll(4),\sp(4),0)$& $i\in\{0,1,2,3\}$, everything $\mod 4$ \\
\hline
\multicolumn{6}{|c|}{$A_n^{(1)}, n\ge 1$} \\
\hline
A4)&$\aa_d+\dots+\aa_e \in H(d-1,e+1)$ & $(\sll(e-d+1),\tt^1+\sll(e-d),0) $  & $\aa_{e+1}+ \dots+\aa_{d-1} \in H(d,e)$ & $(\sll(n-(e-d)),\tt^1+\sll(n-(e-d+1)),0)$ & $0\le d \le e \le n$, everything  $\mod n+1$. \\
\hline
A5)&$\boldsymbol{\w = I_k(k+1)}$ & $(\sll(n+1),\sll(n+1),\C^{n+1})$ & $\boldsymbol{-\w = I_{k+1}(k)}$ &$(\sll(n+1),\sll(n+1),\C^{n+1})$ & $0\le k \le n$, everything $\mod n+1$.\\
\hline
\multicolumn{6}{|c|}{$B_3^{(1)}$} \\
\hline
B1)&$\frac12 (\aa_0+2\aa_2+3\aa_3) \in H(1)$ &$(\soo(7),G_2,0)$& $\boldsymbol{-\w \in I_1(3)}$ &$(\sll(4),\sll(4),\C^4) $& or 0 and 1 switched \\
\hline
B2)&$\frac 12 (\aa_1+\aa_3)\in H(2)$ &$(\sll(2)\times \sll(2),\Delta\sll(2),0)$& $\boldsymbol{-\w \in I_2(1,3)}$ &$(\sll(3),\sll(3),\C^3)$ & or 0 and 1 switched \\
\hline

\multicolumn{6}{|c|}{$B_n^{(1)}$} \\
\hline
B3)&$[2](\aa_1+\dots+\aa_n) \in H(0)$ &$(\soo(2n+1),\soo(2n),0)$ &$[2](\aa_0+\aa_2+ \dots+\aa_n) \in H(1)$ &$(\soo(2n+1),\soo(2n),0)$ &  \\
\hline
B4)&$[2](\frac 12(\aa_0+\aa_1)+\aa_2+\dots+\aa_d)\in \break H(d+1)$ &$(\soo(2k+2),\soo(2k+1),0)$ & $[2](\aa_{d+1}+\dots+\aa_n)\in H(d)$&$(\soo(2n-2k+1),\soo(2n-2k),0)$& $1\le d < n$\\
\hline
\addlinespace[0.3cm]
\hline
\multicolumn{6}{|c|}{$C_2^{(1)}$} \\
\hline
C1)&$[2]\frac12(\aa_0+\aa_2) \in H(1)$ &$(\sll(2)\times \sll(2),\Delta\sll(2),0)$& $[2]\aa_{1} \in H(0,2)$ &$(\sll(2),0,0)$&  \\
\hline
C2)&$[2](\aa_0+\aa_1) \in H(2)$ &$(\soo(5),\soo(4),0)$& $[2](\aa_{1}+\aa_2) \in H(0)$ &$(\soo(5),\soo(4),0$&  \\
\hline
\multicolumn{6}{|c|}{$C_n^{(1)}$} \\
\hline
C3)&$\frac 12 (\aa_0+\aa_n) \in H(1, n-1)$ &$(\sll(2)\times \sll(2),\Delta\sll(2),0)$ & $\aa_1+\dots+\aa_{n-1} \in H(0,n)$ & $(\sll(n-1),\tt^1+\sll(n-2),0)$&see also $n=2$ \\
\hline
C4)&$\aa_0+2\aa_1+\dots+2\aa_{d-1}+\aa_d\in H(d+1)$ &$(\sp(2d+2),(\sll(2)+\sp(2d)),0)$& $\aa_{d}+2\aa_{d+1}+\dots+2\aa_{n-1}+\aa_n \in H(d-1)$ &$(\sp(2n-2d+2),(\sll(2)+\sp(2n-2d)),0)$&$1 \le d <n$ \\
\hline
C5)&$\boldsymbol{\w= I_k-1(k)}$ &$(\sp(2k),\sp(2k),\C^{2k})$& $\boldsymbol{\w = I_{k}(k-1)}$&$(\sp(2n-2k+2),\sp(2n-2k+2),\C^{2n-2k+2})$ &$1 \le k \le n$ \\
\hline
\pagebreak
\hline
\multicolumn{6}{|c|}{$D_4^{(1)}$} \\
\hline
D1)&$[2] \aa_i+\aa_2+\frac 12 \aa_j +\frac 12 \aa_k \in H(l)$ &$(\soo(8),\soo(7),0)$& [2] $\aa_l+\aa_2+\frac 12\aa_j+\frac 12 \aa_k\in H(i)$ &$(\soo(8),\soo(7),0)$& $i \ne j\ne k \ne l$;\break $i,j,k,l\in 0,1,3,4$  \\
\hline
D2)&$[1/2]\aa_i+2\aa_2+\aa_j \in H(k,l)$&$(\sll(4),\sp(4),0)$ &$[1/2] \aa_k+\aa_l\in H(2)$&$(\sll(4),\sp(4),0)$& $i,j,k,l \in 0,1,3,4$\break $i\ne j\ne k \ne l$  \\
\hline
D3)&$\aa_i+\aa_2+\aa_j \in H(k,l)$ &$(\sll(4),\tt^1+\sll(3),0)$ &$\aa_k +\aa_2+\aa_l\in H(i,j)$ &$(\sll(4),\tt^1+\sll(3),0)$ & $i,j,k,l \in 0,1,3,4$\break $i\ne j\ne k \ne l$ \\
\hline
\multicolumn{6}{|c|}{$D_n^{(1)}$} \\
\hline
&$\w$ &local model & $\w^\sharp$ &local model & remarks\\\hline
D4)&$[2](\aa_1+\aa_2+\dots+\aa_{n-2}+\frac12 \aa_{n-1}+\frac12{\aa_n}) \in H(0)$ &$(\soo(2n),\soo(2n-1),0)$ & $[2](\aa_0+\aa_2+\dots+\aa_{n-2}+\frac 12\aa_{n-1}+\frac 12\aa_n)  \in H(1)$ &$(\soo(2n),\soo(2n-1),0)$& \\
\hline
D5)&$[2](\aa_{n-1}+\aa_{n-2}+\dots+\aa_2+\frac12 \aa_0+\frac 12 \aa_1) \in H(n)$ &$(\soo(2n),\soo(2n-1),0)$& $[2](\aa_n+\aa_{n-2}+\dots+\aa_2+\frac 12 \aa_1 +\frac 12 \aa_0)\in H(n-1)$ &$(\soo(2n),\soo(2n-1),0)$& \\
\hline
D6)&$[2](\frac 12\aa_0+\frac 12 \aa_1+\aa_2+\dots+\aa_{d}) \in H(d+1)$ &$(\soo(2d+2),\soo(2d+1),0)$& $[2](\aa_{d+1}+\dots+\aa_{n-2}+\frac 12\aa_{n-1}+\frac 12 \aa_n)\break  \in H(d)$ &$(\soo(2n-2d),\soo(2n-2d-1),0)$& $1\le d \le n-2$   \\
\hline
D7)&$\aa_1+ \dots+\aa_{n-1} \in H(0,n)$ &$(\sll(n),\tt^1+\sll(n-1),0)$ & $\aa_{0}+\aa_2+\dots+\aa_{n-2}+ \aa_n \break \in H(1,n-1)$ &$(\sll(n),\tt^1+\sll(n-1),0)$&  or $\a_n$ and $\a_{n-1}$ switched  \\
\hline
\addlinespace[0.3cm]
\hline
\multicolumn{6}{|c|}{$F_4^{(1)}$} \\
\hline
F1)&$\aa_1+2\aa_2+3\aa_3+2\aa_4\in H(0)$ &$(F_4,\soo(9),0)$& $\aa_0+\aa_1+\aa_2+\aa_3 \in H(4)$&$(\soo(9),\soo(8),0)$& \\
\hline
F2)&$\aa_2+2\aa_3+\aa_4 \in H(1)$ &$(\sll(4),\sp(4),0)$& $\frac 12 \aa_o+\aa_1+\frac 12 \aa_2 \in H(3)$ &$(\sll(4),\sp(4),0)$& \\
\hline
F3)&{$\aa_3+\aa_4\in H(2)$} &$(\sll(3),\gll(2),0)$&$\boldsymbol{-\w \in I_2(3,4)}$&$(\gll(4),\gll(4),\C^4)$&\\
\hline
\addlinespace[0.3cm]
\hline
\multicolumn{6}{|c|}{$G_2^{(1)}$} \\
\hline
&$\w$ &local model & $\w^\sharp$ &local model & remarks\\\hline
G1)&{$\aa_2+2\aa_1 \in H(0)$} &$(G_2,\sll(3),0)$& $\boldsymbol{-\w = I_0(1)}$&$(\sll(3),\sll(3),\C^3)$ &  \\
\hline
G2)&{$\aa_1\in H(2)$}&$(\sll(2),0,0)$ & $\boldsymbol{-\w= I_2(1)}$ &$(\sll(3),\sll(3),\C^3)$& \\
\hline
G3)&{$\frac 12(\aa_0+\aa_1) \in H(2)$}&$(\sll(2)\times \sll(2),\Delta\sll(2),0)$& $\boldsymbol{-\w\in I_2(0,1)}$ &$(\gll(2),\gll(2),\C^2)$& \\
\hline
\addlinespace[0.3cm]
\hline
\multicolumn{6}{|c|}{$A_2^{(2)}$} \\
\hline
A6)&$2\aa_0 \in H(1)$ &$(\sll(2),0,0)$& $\aa_1\in H(0)$ &$(\sll(2),0,0)$& \\
\hline
A7)&{$\a_0 \in H(1)$} &$(\sll(2),0,0)$& $\boldsymbol{-\w= I_1(0)}$ &$(\sll(2),\sll(2),\C^2)$&\\
\hline
\pagebreak
\hline
\multicolumn{6}{|c|}{$A_{2n}^{(2)}$} \\
\hline
A8)&$2\aa_0+2\aa_1+\dots+2\aa_{n-1} \in H(n)$ &$(\soo(2n+1),\soo(2n),0)$& $\aa_n\in H(n-1)$&$(\sll(2),0,0)$& \\
\hline
A9)&{$\a_0+\aa_1+\dots+\aa_d \in H(d+1)$} &$(\soo(2(d+1)+1),\soo(2(d+1)),0)$&$\boldsymbol{-\w= I_{d+1}(d)}$&$(\sp(2n-2d),\sp(2n-2d),\C^{2n-2d})$ &$0\le d < n$\\
\hline
\addlinespace[0.3cm]
\hline
\multicolumn{6}{|c|}{$A_{2n-1}^{(2)}$} \\
\hline
{\tiny A10)}&$\aa_1+2\aa_2+\dots+2\aa_{n-1}+\aa_n \in H(0)$ &$(\sp(2n),(\sll(2)+\sp(2n-2)),0)$& $\aa_0\in H(2)$&$(\sll(2),0,0)$ &or $\aa_0$ and $\aa_1$ switched \\
\hline
{\tiny A11)}&$\aa_0+\aa_1+2\aa_2+\dots+2\aa_{n-1}\in H(n)$ &$(\soo(2n),\soo(2n-1),0)$& $\aa_n\in H(n-1)$ &$(\sll(2),0,0)$& \\
\hline
{\tiny A12)}&$\aa_0+\aa_1+\aa_2 \in H(3)$ &$(\sll(4),\gll(3),0)$& $\aa_2+2\aa_3+\dots +2\aa_{n-1}+\aa_n \in H(0,1)$ &$(\sp(2(n-1),(\sll(2)+\sp(2(n-2))),0)$& \\
\hline 
{\tiny A13)}&$\boldsymbol{\w = I_1(0)}$ &$(\sp(2n),\sp(2n),\C^{2n})$&$\boldsymbol{-w= I_0(1)}$ &$(\sp(2n),\sp(2n),\C^{2n})$& \\
\hline
{\tiny A14)}&{$\frac  12 \aa_0+\frac 12 \aa_1+\aa_2+\dots+\aa_d\in H(d+1)$}&$(\soo(2(d+1)),\soo(2(d+1)-1),0)$&$\boldsymbol{-\w = I_{d+1}(d)}$ &$(\sp(2n-2d),\sp(2n-2d),\C^{2n-2d})$&$1\le d \le n$\\
\hline
\multicolumn{6}{|c|}{$D_{3}^{(2)}$} \\
\hline
D8)&$\frac{1}{2}(\aa_0+\aa_2)\in H(1)$ &$(\sll(2)\times \sll(2),\Delta\sll(2),0)$& $\boldsymbol {-\w \in I_1(0,2)}$ &$(\gll(2),\gll(2),\C^2)$ & \\
\hline
D9)&$\boldsymbol{\w\in I_0(2)}$ &$(\sll(3),\sll(3),C^3)$& $\boldsymbol{-\w\in I_2(0)}$     &$(\sll(3),\sll(3),\C^3)$&   \\
\hline
\multicolumn{6}{|c|}{$D_4^{(2)}$} \\
\hline
{\tiny D10)}&$[2]\frac12(\aa_0+\aa_2)\in H(1,3)$ &$(\sll(2)\times \sll(2),\Delta\sll(2),0)$& $[2]\frac12(\aa_1+\aa_3)\in H(0,2)$     &$(\sll(2)\times \sll(2),\Delta\sll(2),0)$&   \\
\hline
{\tiny D11)}&$[2]\frac12(3\aa_0+2\aa_1+\aa_2) \in H(3)$ &$(\soo(7),G_2,0)$& $[2]\frac12(\aa_1+2\aa_2+3\aa_3) \in H(0)$ &$(\soo(7),G_2,0)$& \\
\hline
\multicolumn{6}{|c|}{$D_{n+1}^{(2)}$} \\
\hline
{\tiny D12)}&$[2](\aa_0+\dots+\aa_d)\in H(d+1)$ &$(\soo(2d+3),\soo(2d+2),0)$& $[2](\aa_{d+1}+\dots+\aa_n)\in H(d)$      &$(\soo(2n-2d),\soo(2n-2d-1),0)$&  $0\le d < n$  \\
\hline
{\tiny D13)}&$\aa_1+\dots+\aa_{n-1}\in H(0,n)$ &$(\sll(n-1),\gll(n-2),0)$& $\aa_0+\aa_n\in H(1,n-1)$&$(\sll(2)\times \sll(2),\Delta\sll(2),0)$& \\
\hline
\addlinespace[0.3cm]
\hline
\multicolumn{6}{|c|}{$E_{6}^{(2)}$} \\
\hline

E1)&$2\aa_1+3\aa_2+2\aa_3+\aa_4\in H(0)$ &$(F_4,\soo(9),0)$& $\aa_0\in H(1)$      &$(\sll(2),0,0)$&   \\
\hline
E2)&$\aa_2+\aa_3+\aa_4\in H(1)$ &$(\soo(7),\soo(6),0)$&  $\aa_0+2\aa_1+2\aa_2+\aa_3\in H(4)$ &$(\sp(8),(\sll(2)+\sp(6)),0)$& \\
\hline
E3)&{$\aa_0+\aa_1+\aa_2\in H(3)$} &$(\sll(4),\gll(3),0)$&$\boldsymbol{-\w\in I_3(0,2)}$ &$(\gll(2),\gll(2),C^2)$& \\
\hline
\pagebreak
\hline
\multicolumn{6}{|c|}{$D_{4}^{(3)}$} \\
\hline
{\tiny D14)}&$[2](2\aa_1+\aa_2) \in H(0)$ &$(G_2,\sll(3),0)$& $[2]\aa_0\in H(1)$      &$(\sll(2),0,0)$&   \\
\hline
{\tiny D15)}&$[2]\frac 12(\aa_0+\aa_2) \in H(1)$ &$(\sll(2)\times \sll(2),\Delta\sll(2),0)$& $[2]\aa_1\in H(0,2)$ &$(\sll(2),0,0)$& \\
\hline
{\tiny D16)}&{$\aa_0+\aa_1\in H(2)$} &$(\sll(3),\gll(2),0)$& $\boldsymbol{-\w\in I_2(0,1)}$ &$(\sll(2),\sll(2),\C^2)$& \\
\hline
\end{longtable}
}
\end{center}
\end{lis}

We continue with a list of the corresponding diagrams:

\begin{longtable}{c|c|c|c|c}
$\begin{picture}(2400,1800)(-300,-900)
\put(0,0){\usebox{\leftrightbiedge}}
\put(0,0){\usebox{\aprime}}
\put(1800,0){\usebox{\aprime}}
\end{picture}$&$\begin{picture}(4500,3600)(-2350,-900)
        \put(-2500,600){\tiny{[1/2]}}
         \multiput(0,0)(0,1800){2}{\usebox{\edge}}
         \multiput(0,1800)(1800,0){2}{\usebox{\vedge}}
         \put(0,0){\usebox{\wcircle}}
         \put(1800,0){\usebox{\wcircle}}
         \put(0,1800){\usebox{\wcircle}}
         \put(1800,1800){\usebox{\wcircle}}
         \put(200,200){\line(1,1){1400}}
         \put(200,1600){\line(1,-1){1400}}
        \end{picture}$
       &$\begin{picture}(4500,1800)(-2350,-900)
        \put(-2500,600){\tiny{[1/2]}}
         \multiput(0,0)(0,1800){2}{\usebox{\edge}}
         \multiput(0,1800)(1800,0){2}{\usebox{\vedge}}
         \put(0,0){\circle*{600}}
         \put(1800,1800){\circle*{600}}
        \end{picture}$&$\begin{picture}(10800,3600)(-600,-900)
        \thicklines
         \multiput(0,0)(5400,0){2}{\usebox{\shortam}}
         \put(3600,0){\usebox{\edge}}
           \multiput(-600,0)(9600,0){2}{\line(1,0){600}}
\multiput(-600,0)(10200,0){2}{\line(0,-1){1200}}
\put (-650,-1200){\line(1,0){10250}}
        \end{picture}$ & 
        $
        \begin{picture}(10800,1800)(-600,-900)
        \thicklines
         \multiput(0,0)(5400,0){2}{\usebox{\dynkinathree}}
         \put(3600,0){\usebox{\shortsusp}}
           \multiput(-600,0)(9600,0){2}{\line(1,0){600}}
\multiput(-600,0)(10200,0){2}{\line(0,-1){1200}}
\put (-650,-1200){\line(1,0){10250}}
\put(-350,-400){$\btr$}
\put(8250, -400){$\btl$}

        \end{picture}
       $\\
A1)&A2)&A3)&A4)&A5\\
\hline
 $\begin{picture}(4500,3600)(0,-1800)
 \thicklines
\put(0,0){\usebox\dynkinbthree}
\put(1800,0){\usebox\vedge}
\put(3600,0){\circle*{600}}
\put(-350,-350){$\btr$}
\put(3300,600){\tiny $\nicefrac12$}
 \end{picture}$
 &
$\begin{picture}(4500,3600)(-900,-900)
\thicklines
\put(0,0){\usebox\dynkinbthree}
\put(1800,0){\usebox\vedge}
\put(0,0){\circle{600}}
\put(3700,0){\circle{600}}
\multiput(0,300)(3700,0){2}{\line(0,1){300}}
\put(0,600){\line(1,0){3700}}
\put(1800,1200){\tiny $\nicefrac12$}
\put(1300,-250){$\btd$}
 \end{picture}$&&$\begin{picture}(2400,2700)(1500,-1800)
        \put(-2100,0){\tiny{[2]}}
         \multiput(0,0)(1800,0){2}{\usebox{\vertex}}
         \put(1800,-1800){\usebox{\vertex}}
         \put(0,0){\line(1,0){1800}}
         \put(1800,0){\line(0,-1){1800}}
         \put(1800,0){\usebox{\susp}}
         \put(5400,0){\usebox{\rightbiedge}}
         \put(0,0){\circle*{600}}
         \put(1800,-1800){\circle*{600}}
        \end{picture}$&
        $\begin{picture}(2400,2700)(3300,-900)
        \put(0,1200){[\nicefrac12]}
         \multiput(0,0)(1800,0){2}{\usebox{\vertex}}
         \multiput(1800,-1800)(1800,1800){2}{\usebox{\vertex}}
         \multiput(0,0)(1800,0){2}{\line(1,0){1800}}
         \put(1800,0){\line(0,-1){1800}}
         \put(3600,0){\usebox{\shortsusp}}
         \put(5400,0){\usebox{\edge}}
         \put(7200,0){\usebox{\shortsusp}}
         \put(9000,0){\usebox{\rightbiedge}}
         \put(5400,0){\circle*{600}}
         \put(7200,0){\circle*{600}}
         \put(7200,600){\tiny 2}
         \end{picture}$\\
 B1) & B2) && B3) & B4)\\
 \hline
$
        \begin{picture}(3600,2400)(-300,-900)
        \put(200,500){\tiny{[\nicefrac12]}}
       \put(0,0){\usebox{\rightbiedge}}
       \put(1800,0){\usebox{\leftbiedge}}
       \multiput(0,0)(3600,0){2}{\usebox{\wcircle}}
       \put(1800,-600){\usebox{\wcircle}}
       \multiput(0,-250)(3600,0){2}{\line(0,-1){950}}
       \put(0,-1200){\line(1,0){3600}}
         \end{picture}
       $&
$
        \begin{picture}(5400,1800)(-1500,-900)
        \put(-1800,0){\tiny{[2]}}
       \put(0,0){\usebox{\rightbiedge}}
       \put(1800,0){\usebox{\leftbiedge}}
       \multiput(0,0)(3600,0){2}{\circle*{600}}
         \end{picture}$&
        
         $
        \begin{picture}(7500,2400)(-300,-900)
        \put(3000, -900){\tiny 1/2}
        \put(0,0){\usebox{\rightbiedge}}
       \put(5400,0){\usebox{\leftbiedge}}
       \put(1800,0){\usebox{\shortam}}
       \multiput(0,0)(7200,0){2}{\usebox{\wcircle}}
       \multiput(0,-250)(7200,0){2}{\line(0,-1){950}}
       \put(0,-1200){\line(1,0){7200}}
       \end{picture}
       $&
       
$
        \begin{picture}(9000,1800)(900,-900)
        \put(0,0){\usebox{\rightbiedge}}
       \put(9000,0){\usebox{\leftbiedge}}
       \put(1800,0){\usebox{\shortsusp}}
       \put(3600,0){\usebox{\dynkinathree}}
       \put(7200,0){\usebox{\shortsusp}}
       \multiput(3600,0)(3600,0){2}{\circle*{600}}
       \end{picture}
       $&$
        \begin{picture}(7200,2400)(-300,-900)
       \put(0,0){\usebox{\rightbiedge}}
       \put(5400,0){\usebox{\leftbiedge}}
       \put(1800,0){\usebox{\susp}}
       \put(1500,-350){$\btr$}
       \put(5199,-350){$\btl$}
       \end{picture}
       $ 
       \\
       C1) & C2) & C3) & C4) & C5\\
       \hline
$\begin{picture}(2400,5000)(1500,-1500)
\put(1800,2100){\tiny[1/2]}        
\multiput(900,0)(1800,0){2}{\usebox{\edge}}
\put(2700,0){\usebox{\vedge}}
\put(2700,1800){\usebox{\vedge}}
\put(2700,1800){\usebox{\vertex}}
\put(900,0){\circle*{600}}
\put(2700,-1800){\circle*{600}}
\end{picture}$

&$\begin{picture}(5400,5000)(-1500,-1500)
\put(-1800,600){\tiny[1/2]}        
\multiput(0,0)(1800,0){2}{\usebox{\edge}}
\put(1800,0){\usebox{\vedge}}
\put(1800,1800){\usebox{\vedge}}
\put(1800,1800){\usebox{\vertex}}
\multiput(1800,1800)(0,-3600){2}{\circle{600}}
\put(1800,0){\circle*{600}}
\multiput(2050,1800)(0,-3600){2}{\line(1,0){2150}}
\put(4200, 1800){\line(0,-1){3600}}
\end{picture}$&$ \begin{picture}(2400,5000)(-0000,-300)
 \put(-2700,-200){\tiny[1/2]}
\put(0,0){\usebox{\vertex}}
\multiput(-1200,1200)(0,-2400){2}{\usebox{\vertex}}
\put(-1200,-1200){\line(1,1){1200}}
\put(-1200,1200){\line(1,-1){1200}}
\put(000,0){\usebox{\susp}}
\put(3600,0){\usebox{\bifurc}}
\multiput(-1200,1200)(0,-2400){2}{\circle*{600}}
\end{picture}$ & $\begin{picture}(9000,1800)(00,-900)
\put(-900,2400){\tiny[1/2]}
        \put(0,0){\usebox{\vertex}}
\multiput(-1200,1200)(0,-2400){2}{\usebox{\vertex}}
\put(-1200,-1200){\line(1,1){1200}}
\put(-1200,1200){\line(1,-1){1200}}
\put(0,0){\usebox{\shortsusp}}
\put(1800,0){\usebox{\dynkinafour}}
\put(7200,0){\usebox{\shortsusp}}
\put(9000,0){\usebox{\bifurc}}
\multiput(3600,0)(1800,0){2}{\circle*{600}}
\end{picture}$&$\begin{picture}(9000,5000)(-2100,-900)
\put(0,0){\usebox{\vertex}}
\multiput(-1200,1200)(0,-2400){2}{\usebox{\vertex}}
\put(-1200,-1200){\line(1,1){1200}}
\put(-1200,1200){\line(1,-1){1200}}
\put(000,0){\usebox{\susp}}
\put(3600,0){\usebox{\bifurc}}
\multiput(-1200,1200)(0,-2400){2}{\circle{600}}
\multiput(4800,1200)(0,-2400){2}{\circle{600}}
\multiput(-1200,-1200)(25,0){20}{\circle*{70}}
\multiput(-700,-1200)(0,25){12}{\circle*{70}}
\multiput(-700,-900)(25,0){12}{\circle*{70}}
\multiput(-400,-900)(0,25){12}{\circle*{70}}
\multiput(-400,-600)(25,0){12}{\circle*{70}}
\multiput(-100,-600)(0,25){12}{\circle*{70}}
\multiput(-100,-300)(25,0){12}{\circle*{70}}
\multiput(200,-300)(25,25){8}{\circle*{70}}
\multiput(400,-100)(25,-25){8}{\circle*{70}}
\multiput(600,-300)(25,25){8}{\circle*{70}}
\multiput(800,-100)(25,-25){8}{\circle*{70}}
\multiput(1000,-300)(25,25){8}{\circle*{70}}
\multiput(1200,-100)(25,-25){8}{\circle*{70}}
\multiput(1400,-300)(25,25){8}{\circle*{70}}
\multiput(1600,-100)(25,-25){8}{\circle*{70}}
\multiput(1800,-300)(25,25){8}{\circle*{70}}
\multiput(2000,-100)(25,-25){8}{\circle*{70}}
\multiput(2200,-300)(25,25){8}{\circle*{70}}
\multiput(2400,-100)(25,-25){8}{\circle*{70}}
\multiput(2600,-300)(25,25){8}{\circle*{70}}
\multiput(2800,-100)(25,-25){8}{\circle*{70}}
\multiput(3000,-300)(25,25){8}{\circle*{70}}
\multiput(3200,-100)(25,-25){8}{\circle*{70}}
\multiput(3400,-300)(25,0){12}{\circle*{70}}
\multiput(3700,-300)(0,-25){12}{\circle*{70}}
\multiput(3700,-600)(25,0){12}{\circle*{70}}
\multiput(4000,-600)(0,-25){12}{\circle*{70}}
\multiput(4000,-900)(25,0){12}{\circle*{70}}
\multiput(4300,-900)(0,-25){12}{\circle*{70}}
\multiput(4300,-1200)(25,0){20}{\circle*{70}}
\multiput(-1200,+1200)(25,0){20}{\circle*{70}}
\multiput(-700,+1200)(0,-25){12}{\circle*{70}}
\multiput(-700,+900)(25,0){12}{\circle*{70}}
\multiput(-400,900)(0,-25){12}{\circle*{70}}
\multiput(-400,600)(25,0){12}{\circle*{70}}
\multiput(-100,600)(0,-25){12}{\circle*{70}}
\multiput(-100,300)(25,0){12}{\circle*{70}}
\multiput(200,300)(25,-25){8}{\circle*{70}}
\multiput(400,100)(25,25){8}{\circle*{70}}
\multiput(600,300)(25,-25){8}{\circle*{70}}
\multiput(800,100)(25,25){8}{\circle*{70}}
\multiput(1000,300)(25,-25){8}{\circle*{70}}
\multiput(1200,100)(25,25){8}{\circle*{70}}
\multiput(1400,300)(25,-25){8}{\circle*{70}}
\multiput(1600,100)(25,25){8}{\circle*{70}}
\multiput(1800,300)(25,-25){8}{\circle*{70}}
\multiput(2000,100)(25,25){8}{\circle*{70}}
\multiput(2200,300)(25,-25){8}{\circle*{70}}
\multiput(2400,100)(25,25){8}{\circle*{70}}
\multiput(2600,300)(25,-25){8}{\circle*{70}}
\multiput(2800,100)(25,25){8}{\circle*{70}}
\multiput(3000,300)(25,-25){8}{\circle*{70}}
\multiput(3200,100)(25,25){8}{\circle*{70}}
\multiput(3400,300)(25,0){12}{\circle*{70}}
\multiput(3700,300)(0,25){12}{\circle*{70}}
\multiput(3700,600)(25,0){12}{\circle*{70}}
\multiput(4000,600)(0,25){12}{\circle*{70}}
\multiput(4000,900)(25,0){12}{\circle*{70}}
\multiput(4300,900)(0,25){12}{\circle*{70}}
\multiput(4300,1200)(25,0){20}{\circle*{70}}
\end{picture}$ 
\\
D1) & D2) &   D4)& D5) & D3), D6)\\ 
\hline
&&$\begin{picture}(2400,1800)(2400,-900)
\put(0,0){\usebox{\dynkinatwo}}
\put(1800,0){\usebox{\dynkinffour}}
\put(0,0){\circle*{600}}
\put(7200,0){\circle*{600}}
\end{picture}
$
&
$
        \begin{picture}(2400,1800)(2400,-200)
\put(1200,600){\tiny[1/2]}        
\put(0,0){\usebox{\dynkinatwo}}
\put(1800,0){\usebox{\dynkinffour}}
\put(1800,0){\circle*{600}}
\put(5400,0){\circle*{600}}
\end{picture}
$ &$\begin{picture}(2400,1800)(2400,-900)
\put(0,0){\usebox{\dynkinatwo}}
\put(1800,0){\usebox{\dynkinffour}}
\put(5400,0){\usebox\atwo}
\put(3300,-350){$\btl$}
\end{picture}
$\\
&&F1) & F2)&F3)\\
\hline
$ \begin{picture}(3600,1800)(0,-900)
 \put(0,0){\usebox\dynkinathree}
 \multiput(1800,200)(0,-400){2}{\line(1,0){1800}}
 \put(3300,0){\line(-1,1){500}}
  \put(3300,0){\line(-1,-1){500}}
  \put(3600,0){\circle*{600}}
  \put(-300,-350){$\btr$}
 \end{picture}$
 &
$  \begin{picture}(3600,1800)(0,-500)
 \put(0,0){\usebox\dynkinathree}
 \multiput(1800,200)(0,-400){2}{\line(1,0){1800}}
 \put(3300,0){\line(-1,1){500}}
  \put(3300,0){\line(-1,-1){500}}
  \put(3600,0){\usebox\aone}
  \put(1300,-350){$\btl$}
 \end{picture}$
 &
$\begin{picture}(3600,1800)(0,-500)
 \put(0,0){\usebox\dynkinathree}
 \multiput(1800,200)(0,-400){2}{\line(1,0){1800}}
 \put(3300,0){\line(-1,1){500}}
  \put(3300,0){\line(-1,-1){500}}
  \multiput(0,0)(3600,0){2}{\circle{600}}
  \multiput(0,300)(3600,0){2}{\line(0,1){600}}
  \put(0,900){\line(1,0){3600}}
  \put(1400,-200){$\bt$}
\end{picture}$&&\\
G1) & G2) & G3)&&\\
\hline
$\begin{picture}(2400,2400)(-300,-900)
\multiput(0,0)(1800,0){2}{\circle*{300}}\thicklines
\multiput(0,-60)(0,120){2}{\line(1,0){1800}}
\multiput(0,-180)(0,360){2}{\line(1,0){1800}}
\multiput(150,0)(25,25){20}{\circle*{50}}
\multiput(150,0)(25,-25){20}{\circle*{50}}
\put(0,0){\usebox{\aprime}}
\put(1800,0){\usebox{\aone}}
\end{picture}$ & 
$\begin{picture}(2400,2400)(-300,-900)
\multiput(0,0)(1800,0){2}{\circle*{300}}\thicklines
\multiput(0,-60)(0,120){2}{\line(1,0){1800}}
\multiput(0,-180)(0,360){2}{\line(1,0){1800}}
\multiput(150,0)(25,25){20}{\circle*{50}}
\multiput(150,0)(25,-25){20}{\circle*{50}}
\put(1500,-350){$\btl$}
\put(0,0){\usebox{\aone}}
\end{picture}$&$\begin{picture}(7200,2400)(-300,-900)
        \multiput(0,0)(5400,0){2}{\usebox{\leftbiedge}}
        \put(1800,0){\usebox{\susp}}
        \put(0,0){\usebox{\vertex}}
        \put(5400,0){\circle*{600}}
        \put(5400,600){\tiny 2}
        \put(7200,0){\usebox{\aone}}
\end{picture}$
& 
$\begin{picture}(7200,1800)(1500,-900)
        \multiput(0,0)(9000,0){2}{\usebox{\leftbiedge}}
        \put(1800,0){\usebox{\susp}}
        \put(3600,0){\usebox\dynkinatwo}
        \put(5400,0){\usebox\susp}
        \put(0,0){\usebox{\vertex}}
        \put(3600,0){\circle*{600}}
        \put(5300,-350){$\btr$}
\end{picture}$&$\begin{picture}(2400,2400)(-300,-900)
        \put(-1800,0){\usebox{\edge}}
        \put(0,0){\usebox{\vedge}}
        \put(0,0){\usebox{\shortsusp}}
        \put(1800,0){\usebox{\leftbiedge}}
        \put(-1800,0){\usebox{\aone}}
        \put(0,0){\circle*{600}}
        \end{picture}
$\\
A6) & A7)& A8) & A9)&A10)\\
\hline
&$
        \begin{picture}(3600,3600)(-900,-900)
        \put(-1800,0){\usebox{\edge}}
        \put(0,0){\usebox{\vedge}}
        \put(0,0){\usebox{\shortsusp}}
        \put(1800,0){\usebox{\leftbiedge}}
        \put(1800,0){\circle*{600}}
        \put(3600,0){\usebox{\aone}}
        \end{picture}$&
        
       $\begin{picture}(3600,3600)(-300,-900)
        \put(-1800,0){\usebox{\edge}}
        \put(0,0){\usebox{\vedge}}
        \put(0,0){\usebox{\edge}}
        \put(1800,0){\usebox{\shortsusp}}
        \put(3600,0){\usebox{\leftbiedge}}
        \put(-1800,0){\circle{600}}
        \put(0,-1800){\circle{600}}
        \put(1800,0){\circle*{600}}
        \multiput(-1800,0)(25,-25){13}{\circle*{70}}
        \multiput(-1475,-325)(25,25){7}{\circle*{70}}
        \multiput(-1300,-150)(25,-25){7}{\circle*{70}}
        \multiput(-1125,-325)(25,25){7}{\circle*{70}}
        \multiput(-950,-150)(25,-25){7}{\circle*{70}}
        \multiput(-775,-325)(25,25){7}{\circle*{70}}
        \multiput(-600,-150)(25,-25){7}{\circle*{70}}
        \multiput(-425,-325)(25,25){7}{\circle*{70}}
        \multiput(-250,-150)(25,-25){7}{\circle*{70}}
        \multiput(-75,-325)(-25,-25){7}{\circle*{70}}
        \multiput(-250,-500)(25,-25){7}{\circle*{70}}
        \multiput(-75,-675)(-25,-25){7}{\circle*{70}}
        \multiput(-250,-850)(25,-25){7}{\circle*{70}}
        \multiput(-75,-1025)(-25,-25){7}{\circle*{70}}
        \multiput(-250,-1200)(25,-25){7}{\circle*{70}}
        \multiput(-75,-1375)(-25,-25){7}{\circle*{70}}
        \multiput(-250,-1550)(25,-25){12}{\circle*{70}}
\end{picture}$& $\begin{picture}(7200,3000)(-300,-1900)
        \put(-1800,0){\usebox{\edge}}
        \put(0,0){\usebox{\vedge}}
        \put(0,0){\usebox{\edge}}
        \put(1800,0){\usebox{\susp}}
        \put(5400,0){\usebox{\leftbiedge}}
        \put(-2100,-350){$\btr$}
        \put(-500,-2100){$\bt$}
        \end{picture}$
        &
$\begin{picture}(7200,3000)(-300,-1900)
        \put(-1800,0){\usebox{\edge}}
        \put(0,0){\usebox{\vedge}}
        \put(0,0){\usebox{\edge}}
        \put(1800,0){\usebox{\shortsusp}}
        \put(3600,0){\usebox{\dynkinatwo}}
        \put(5400,0){\usebox{\shortsusp}}
        \put(7200,0){\usebox{\leftbiedge}}
        \put(3600,0){\circle*{600}}
        \put(5100,-350){$\btr$}
        \end{picture}$\\
         & A11) & A12)&A13)&A14)\\
\hline
$        \begin{picture}(3600,2400)(-300,00)
        \put(0,0){\usebox{\vertex}}
        \put(0,0){\usebox{\leftbiedge}}
        \put(1800,0){\usebox{\rightbiedge}}
		\multiput(0,0)(3600,0){2}{\circle{600}}
		\multiput(0,300)(3600,0){2}{\line(0,1){300}}
		\put(0,600){\line(1,0){3600}}
		\put(1500,1200){\tiny $\nicefrac12$}
		\put(1500,-350){$\btl$}
        \end{picture}$&
        $        \begin{picture}(3600,2400)(0,-900)
        \put(0,0){\usebox{\vertex}}
        \put(0,0){\usebox{\leftbiedge}}
        \put(1800,0){\usebox{\rightbiedge}}
		\put(-200,-350){$\btr$}
		\put(3400,-350){$\btl$}
        \end{picture}$&$        \begin{picture}(3600,2400)(-150,-900)
        \put(-2400,0){\tiny[1/2]}
        \put(0,0){\usebox{\vertex}}
        \put(0,0){\usebox{\leftbiedge}}
        \put(3600,0){\usebox{\rightbiedge}}
        \put(1800,0){\usebox{\dynkinatwo}}
        \multiput(0,0)(3600,0){2}{\usebox{\wcircle}}
        \multiput(1800,0)(3600,0){2}{\usebox{\wcircle}}
        \multiput(0,-250)(3600,0){2}{\line(0,-1){950}}
        \multiput(1800,250)(3600,0){2}{\line(0,1){950}}
        \put(0,-1200){\line(1,0){3600}}
        \put(1800,1200){\line(1,0){3600}}
        \end{picture}$
        &
        $\begin{picture}(2400,1800)(1500,-900)
         \put(-2400,0){\tiny[1/2]}
        \put(0,0){\usebox{\vertex}}
        \put(0,0){\usebox{\leftbiedge}}
        \put(3600,0){\usebox{\rightbiedge}}
        \put(1800,0){\usebox{\dynkinatwo}}
        \multiput(0,0)(5400,0){2}{\circle*{600}}
        \end{picture}$\\
        D8) & D9) &D10)& D11& \\
\hline
&&$
        \begin{picture}(9000,1800)(-300,-00)
        \put(0,900){\tiny [2]}
        \put(0,0){\usebox{\vertex}}
        \put(0,0){\usebox{\leftbiedge}}
        \put(7200,0){\usebox{\rightbiedge}}
        \multiput(1800,0)(3600,0){2}{\usebox{\shortsusp}}
        \put(3600,0){\usebox{\dynkinatwo}}
        \put(3600,0){\circle*{600}}
        \put(5400,0){\circle*{600}}
        \end{picture}
$
&
$
        \begin{picture}(9000,1800)(-300,00)
        \put(0,0){\usebox{\vertex}}
        \put(0,0){\usebox{\leftbiedge}}
        \put(5400,0){\usebox{\rightbiedge}}
        \put(1800,0){\usebox{\shortam}}
        \multiput(0,0)(7200,0){2}{\usebox{\wcircle}}
        \multiput(0,250)(7200,0){2}{\line(0,1){950}}
        \put(0,1200){\line(1,0){7200}}
  \end{picture}
$&\\
&&D12) & D13)        & \\
\hline
&&$
        \begin{picture}(9000,2400)(-300,-900)
        \put(0,0){\usebox{\dynkinathree}}
        \put(3600,0){\usebox{\leftbiedge}}
        \put(5400,0){\usebox{\dynkinatwo}}
        \put(0,0){\usebox{\aone}}
        \put(1800,0){\circle*{600}}
  \end{picture}
$
&
$
        \begin{picture}(9000,1800)(-300,-900)
        \put(0,0){\usebox{\dynkinathree}}
        \put(3600,0){\usebox{\leftbiedge}}
        \put(5400,0){\usebox{\dynkinatwo}}
        \put(7200,0){\circle*{600}}
        \put(1800,0){\circle*{600}}
  \end{picture}
$
&
$
        \begin{picture}(9000,1800)(-300,-900)
        \put(0,0){\usebox{\athree}}
        \put(3600,0){\usebox{\leftbiedge}}
        \put(5400,0){\usebox{\dynkinatwo}}
        \put(5100,-350){$\btr$}
  \end{picture}
$\\
&&E1) & E2) & E3)\\
\hline
&&$
        \begin{picture}(2400,2400)(-300,-900)
        \put(-3600,0){\tiny{[1/2]}}
        \put(0,0){\usebox{\dynkinatwo}}
        \put(1800,0){\usebox{\dynkingtwo}}
        \put(0,0){\usebox{\aprime}}
        \put(1800,0){\circle*{600}}
        \put(1800,600){\tiny 2}
  \end{picture}
$
&
$
        \begin{picture}(2400,1800)(-300,-900)
        \put(-3600,0){\tiny{[1/2]}}
        \put(0,0){\usebox{\dynkinatwo}}
        \put(1800,0){\usebox{\dynkingtwo}}
        \put(1800,0){\usebox{\aprime}}
        \multiput(0,0)(3600,0){2}{\usebox{\wcircle}}
        \multiput(0,-250)(3600,0){2}{\line(0,-1){950}}
        \put(0,-1200){\line(1,0){3600}}
  \end{picture}
$
&
$
        \begin{picture}(2400,2400)(-300,-900)
        \put(-3600,0){\tiny{[1/2]}}
        \put(0,0){\usebox{\atwo}}
        \put(1800,0){\usebox{\dynkingtwo}}
        \put(3300,-250){$\btr$}

  \end{picture}
$\\
&&D14) & D15) & D16)\\
\hline

\end{longtable}

\begin{rem}\leavevmode\\
\begin{enumerate}
\item Knop showed in \cite{Kno14} that the case A2 (without the factor 1/2) corresponds to the disymmetric manifold $\nicefrac{\SU(4)}{SO(4)}\times \nicefrac{\SU(4)}{\SP(4)}$.
\item The three possible cases $\w(A5),2\w(A4),4\w(A1)$ for $A_1^{(1)}$ correspond, in that order, to the manifolds $S^4$ (the so-called "spinning 4-sphere", \cite{AMW02}), $S^2\times S^2$ and $\PP^2(\C)$ by \cite{Kno14}, pp. 36.
\item Knop \cite{Kno14} determined the Manifold that corresponds to case C5). More precisely, this spherical pair comes from the $\SP(2n)$-manifold structure on the quaternionic Grassmanian: $M=\operatorname{Gr}_k(\mathbb{H}^{n+1})$. This is a generalization of a result from \cite{Esh09}.
\end{enumerate}
\end{rem}

\part{Hamiltonian Manifolds of Rank one}

Let us recall that every q-Hamiltonian manifold localy looks like a Hamiltonian manifold, cf. \cite{Kno14}. Conversely, every Hamiltonian manifolds carries a quasi-Hamiltonian structure. We shall use this to apply our theory to classify momentum polytopes of compact Hamiltonian manifolds of rank one.\\

Hence, speaking in the quasi-Hamiltonian setting, we now want to look at momentum polytopes of rank one that do not hit the wall of the fundamental alcove corresponding to $\a_0$, but every other wall. That means we can forget about the existence of this one wall and find ourselves in the Weyl chamber of a classical root system that is in bijection with the orbit space $\mathfrak{k^*}/K$ of a classical Hamiltonian manifold, more details in \cite{Kno11}. We again have the classification theorem that this manifolds are characterized by spherical pairs.\\

We classify all compact convex Hamiltonian manifolds such that the corresponding spherical pair $(\P,\L_S)$ has the following properties:

\begin{itemize}
\item The moment polytope $\P=[X_1X_2]$ is a line segment touching every wall of the dominant chamber, meaning $\a(X_1)=0$ or $\a(X_2)=0$ for everey simple root $\a\in\{\a_1, \dots, \a_n\}$, that does not touch $\mathbf 0$.
\item The lattice $\L_S$ has rank one and its generator $\w$ generates $\P$ as a line segment.
\end{itemize}

Most of our theory now works similar. We have the following structure theorems:

\begin{thm} 
There are no bi-homogeneous polytopes of rank one for compact Hamiltonian $K$-manifolds.
\end{thm}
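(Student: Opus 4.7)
The plan is to exploit the fact that, in the Hamiltonian setting, the linear span $\bar{\aaa}_\P$ of the moment polytope $\P=[X_1X_2]$ is a genuine one-dimensional linear (not affine) subspace of $\mathfrak{t}^*$, and so the rank-one lattice $\L_S$ equals $\Z\w$ for its primitive generator $\w$. Since $\P$ is the segment joining $X_1$ and $X_2$, the two tangent cones are opposite half-lines, $C_{X_1}\P=\R_{\ge 0}\w$ and $C_{X_2}\P=\R_{\le 0}\w$. Applying the sphericity condition of \cref{sphpair} at both endpoints, the weight monoids of the two local models are $C_{X_1}\P\cap\L_S=\N\w$ and $C_{X_2}\P\cap\L_S=\N(-\w)$. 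Hence their generators at the two vertices are $\w$ and $\w^\sharp=-\w$.

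I would then invoke the classification of rank-one spherical roots recalled in \cref{lunad}: the generator of the weight monoid of any homogeneous smooth affine spherical variety of rank one is a spherical root, and by inspection of that list every such root is a non-negative $\N[\tfrac12]$-linear combination of the simple roots $\a_1,\dots,\a_n$ of the ambient finite root system $\Phi_K$. A bi-homogeneous polytope would therefore force both $\w$ and $-\w$ to be non-negative combinations of the linearly independent $\a_1,\dots,\a_n$. This is possible only for $\w=0$, contradicting the requirement that $\w$ span the one-dimensional polytope $\P$.

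I expect no serious obstacle beyond recognising the conceptual reason this statement is so much cleaner than its quasi-Hamiltonian counterpart: every bi-homogeneous example in the previous part of the paper relied essentially on the freedom $\w^\sharp=t\delta-\w$ furnished by the null root $\delta$, whose vanishing gradient allows the second generator to be positive on the affine simple roots while still pointing opposite to $\w$ along $\P$. In the Hamiltonian case there is no $\a_0$ and hence no such $\delta$ of gradient zero in $\Phi_K$; the shift degree of freedom disappears and the argument collapses to the one-line observation above.
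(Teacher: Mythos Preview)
Your proof is correct and follows essentially the same line as the paper's: both observe that in the Hamiltonian (finite root system) setting the two local-model generators must satisfy $\w^\sharp=-\w$, and that since each is a non-negative combination of the linearly independent simple roots $\a_1,\dots,\a_n$, this forces $\w=0$. Your version is more explicit about \emph{why} $\w^\sharp=-\w$ here (via $\L_S=\Z\w$ and opposite tangent cones) and about the contrast with the affine case, where the null root $\delta$ supplies the extra degree of freedom $\w^\sharp=t\delta-\w$; the paper compresses all of this into two sentences.
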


\begin{proof}
Suppose there was a bi-homogeneous polytope. Hence the weight $\w_1$ in $X_1$ and the weight $\w_2$ in $X_2$ are $\Z$-linear combinations of simple roots. As we suppose our polytope to hit every wall of the alcove, $\w_2=-\w_1$ could only be possible if there was a linear dependence between the simple roots (Which is given via the root $\d$ in the affine case!). But as the simple roots of a classical root system are well known to be linearly independent, this is not possible.
\end{proof}

We can also find an analogon for \cref{inthm}:
\begin{thm} 
 Let $G$ be simple of rank $n$ and $\w$ the weight of an inhomogeneous smooth affine spherical variety of rank one that is a local model in $X_1$ with $\la \w, \a_k^\vee \ra =1$. Then, for $X_2=X_1+c\w$, we have
 \[
   S(X_2)= S \setminus \{\a_k\}
 \]

\end{thm}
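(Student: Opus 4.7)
The plan is to transport the proof of \cref{inthm} to the classical (non-affine) setting, where it becomes strictly easier because the simple roots $\a_1,\dots,\a_n$ are linearly independent and no analogue of the relation defining $\d$ binds them.

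After renumbering I may assume $S(X_1) = \{\a_1, \dots, \a_p\}$ with $\a_k \in S(X_1)$. By \cref{inh1}, an inhomogeneous rank-one local model for the Levi associated to $S(X_1)$ must satisfy $\la \w, \a_i^\vee\ra = 0$ for every $i \in \{1,\dots,p\}\setminus\{k\}$ and $\la \w, \a_k^\vee\ra = 1$. The identity \eqref{afflin} then gives, for every simple root $\a_i$,
\[
\a_i(X_2) \;=\; \a_i(X_1) \,+\, c\,\la \w, \a_i^\vee\ra,
\]
and a three-case analysis identical to that of \cref{inthm} immediately yields $\{\a_1,\dots,\a_p\}\setminus\{\a_k\} \subseteq S(X_2)$ (both summands vanish) and $\a_k \notin S(X_2)$ once $c\ne 0$ (first summand vanishes, second equals $c$).

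What remains is to exhibit $X_1$ and $c$ such that every $\a_j \notin S(X_1)$ also lies in $S(X_2)$. The $p$ linear conditions $\a_i(X_1) = 0$ for $i = 1,\dots,p$ carve out an $(n-p)$-dimensional affine subspace of $\mathfrak{t}^*$, and adjoining the parameter $c$ supplies $n-p+1$ free parameters. The $n-p$ requirements $\a_j(X_2) = 0$ for $j = p+1,\dots,n$ are themselves linear in $(X_1,c)$, and by linear independence of $\a_{p+1},\dots,\a_n$ the coefficient matrix has maximal rank $n - p$. Hence the system is solvable, in fact with a one-parameter family of solutions; any choice with $c \ne 0$ realises $S(X_2) = S\setminus\{\a_k\}$.

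The only substantive departure from the affine case is the cleaner parameter count: whereas \cref{inthm} had to balance $n - p$ equations against exactly $n - p$ unknowns (relying on the linear independence of simple affine roots after dropping one), here one extra degree of freedom is available from the outset, so there is essentially no obstacle to the argument beyond verifying the case-by-case computation described above.
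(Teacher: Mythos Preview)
Your proof is correct and follows essentially the same approach as the paper's own proof, which likewise reduces to the argument of \cref{inthm} and then does a parameter count to show the remaining walls can all be hit. The only cosmetic difference is that the paper also counts the $n-k$ free coordinates of $\w$ (arriving at $2(n-k)+1$ parameters against $n-k$ equations), whereas you treat $\w$ as fixed and count only the $n-p$ freedoms in $X_1$ together with $c$; either count suffices for solvability, so this does not affect the argument.
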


\begin{proof}
 This is just the proof of \cref{inthm} with some slight adjustments: We again assume $S(X_1) =\{\a_1, \dots, \a_k\}$ after renumbering. This time, our $\w$ has $n-k$ free coordinates, and $X_1$ also has $n-k$ free parameters. We argue as above that $\a_1, \dots, \a_{k-1}\in  S(X_2), \a_k \notin  S(X_2)$. It is also possible for the polytope to touch every wall of the alcove because for that, it is necessary to solve a homogeneous system of $n-k$ equations with $2(n-k)+1$ free parameters. As the equations are determined by simple roots (which are linearly independent), the system is always solvable.
\end{proof}

We deduce the same way as for genuine q-Hamiltonians:

\begin{cor} \leavevmode\\
 \begin{enumerate}
  \item Bi-inhomogeneous polytopes are only possible if $|S(X_1)|=| S(X_2)|=n-1$ and, if $S(X_1)=S \setminus \a_k , \la \w, \a_j^\vee \ra =1$ we must have $ S(X_2)=S \setminus \a_j$ and $\la -\w, \a_k^\vee \ra =1$.
  \item Let $\w$ be a homogeneous polytope for $X_1$. Mixed polytopes are only possible if $|S(X_1)|=n-1$.
 \end{enumerate}

\end{cor}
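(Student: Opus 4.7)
The plan is to deduce both assertions as immediate consequences of the preceding Hamiltonian structure theorem, in exact parallel to the derivations of \cref{incor1} and \cref{mixcor2} for the genuine quasi-Hamiltonian case. The only arithmetic change is that in the classical (non-affine) setting $|S|=n$, so omitting one simple root from $S$ leaves $n-1$ roots rather than the $n$ that appeared in the affine analogues.

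First I would settle part (1). Starting from an inhomogeneous local model $\w$ at $X_1$, \cref{inh1} provides a unique root $\a_j \in S(X_1)$ on which $\w$ takes a nonzero value, namely $\la \w, \aa_j^\vee\ra = 1$, with $\la \w, \aa_i^\vee\ra = 0$ for every other simple root in $S(X_1)$. The preceding theorem then immediately yields $S(X_2) = S \setminus \{\a_j\}$, in particular $|S(X_2)| = n-1$. Next I would reverse direction: by hypothesis $-\w$ is an inhomogeneous local model at $X_2$, so applying the same theorem at $X_2$ along the ray $X_2 + c(-\w)$ produces the unique $\a_k \in S(X_2)$ with $\la -\w, \aa_k^\vee\ra = 1$ and forces $S(X_1) = S \setminus \{\a_k\}$. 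Combining the two equalities gives the symmetric description claimed in the statement.

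Part (2) I would dispose of in one line: the hypothesis that $-\w$ is an inhomogeneous local model at $X_2$ allows me to apply the preceding theorem at $X_2$ directly, which produces $S(X_1) = S \setminus \{\a_k\}$ for the corresponding index $k$, and hence $|S(X_1)| = n-1$. There is no serious obstacle here; the only step that deserves a moment's thought is the symmetry of the structure theorem under interchange of the two endpoints and replacement of $\w$ by $-\w$. This symmetry is visible from the proof of the theorem, since it uses only that one has an inhomogeneous local model at the chosen basepoint together with the line parametrization $X_2 = X_1 + c\w$ of the opposite endpoint; swapping roles and negating $\w$ leaves both ingredients intact.
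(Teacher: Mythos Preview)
Your proposal is correct and follows essentially the same approach as the paper, which simply writes ``Reason the same way as in the proof of \cref{incor1}'' and ``Same reasons as for \cref{mixcor2}''; you have merely unpacked those references and made the arithmetic shift from $|S|=n+1$ (affine) to $|S|=n$ (classical) explicit. The only cosmetic difference is that in part~(1) the paper's argument for \cref{incor1} deduces $\la -\w,\aa_k^\vee\ra=1$ by observing directly that $\w$ vanishes on every root of $S(X_2)$ except $\a_k$, whereas you obtain it by a second application of the structure theorem at $X_2$; both are immediate and equivalent.
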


\begin{proof}\leavevmode\\
 \begin{enumerate}
  \item Reason the same way as in the proof of \cref{incor1}
  \item Same reasons as for \cref{mixcor2}
 \end{enumerate}
\end{proof}

It follows:

\begin{cor} 
Let us consider a compact Hamiltonian $K$-manifold  of rank one for $K$ simple such that the corresponding moment polytope touches every wall of the chamber, but not in the origin. Then we can find its spherical pair by examining all $X_1$ with $|S(X_1)|=n-1$.
\end{cor}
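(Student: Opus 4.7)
The claim is essentially a direct synthesis of the three structural results proved immediately before it for compact Hamiltonian $K$-manifolds of rank one: the non-existence of bi-homogeneous polytopes, and the two parts of the preceding corollary on bi-inhomogeneous and mixed polytopes. The plan is to run through the three possible types for $\P=[X_1X_2]$ and observe that in every surviving type at least one endpoint has a local root system of size $n-1$.

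First I would apply the bi-homogeneous no-go theorem to reduce to the bi-inhomogeneous and mixed cases. If $\P$ is bi-inhomogeneous, part (1) of the preceding corollary gives $|S(X_1)|=|S(X_2)|=n-1$, and either endpoint can serve as the distinguished one. If $\P$ is mixed, part (2) of that corollary says that the endpoint supporting the homogeneous local model, call it $X_1$, satisfies $|S(X_1)|=n-1$. Thus, after possibly interchanging $X_1$ and $X_2$, we may always take $X_1$ to be a boundary point of the dominant chamber, different from the origin, with $|S(X_1)|=n-1$.

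For each such $X_1$ there is exactly one simple root $\a_k\notin S(X_1)$, and by \cref{rank1var} together with the classification of smooth affine spherical varieties (Akhiezer's list in the homogeneous case, the spherical modules of type $A$ or $C$ in the inhomogeneous case) there are only finitely many candidate generators $\w$ of the weight monoid at $X_1$. Each $\w$ determines the candidate line $X_1+\R\w$, hence the candidate second endpoint $X_2$ as the other boundary point of its intersection with the dominant chamber, and the spherical-pair conditions at $X_2$ are then a finite, explicit check completely analogous to the case-by-case tables in the genuine quasi-Hamiltonian part.

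I expect no substantive obstacle: the statement is essentially a bookkeeping consequence of the preceding theorem and corollary. The only point needing care is the handling of diagram automorphisms when one fixes $X_1$ as the distinguished endpoint, so that in the bi-inhomogeneous case the enumeration is not over-counted and no mixed polytope is missed.
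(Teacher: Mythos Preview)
Your proposal is correct and matches the paper's reasoning. The paper does not even write out a separate proof for this corollary---it simply prefaces the statement with ``It follows:'', treating it as an immediate synthesis of the no-go theorem for bi-homogeneous polytopes and the two parts of the preceding corollary, exactly as you spell out.
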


We go on and explicitly determine these polytopes, using the same short notation we used for the q-Hamiltonians. If something does not work, the reason is always \cref{inh1}. We shall again give the case-by-case-study for $A_n$ and refer to \cite{Pau} for details on the other cases (that work very similar).

For $A_n$, we have: 

\begin{center}
\tiny{
\begin{longtable}{|p{3cm}|p{3cm}|p{3cm}|p{4cm}|}
\hline
$S(X_1)$ & choice of $\w$ & $ S(X_2)$ &  $-w$ inhom.?\\\endhead
\hline
$\sm{1}$ & $\aa_{1,n}$ & $\ssm{1}{n}$& $-\w\in I_1$  \\
\hline
$n=2$ & 2$\a_2$ &$\sm{2}$ & no: $\Sp{1}=2$  \\
\hline
$n=4$ & $\aa_2+2\aa_3+\aa_4$ & $\sm{3}$ & $-w\in I_4$ \\
\hline
$n=4$ & $\frac12(\aa_2+2\aa_3+\aa_4)$ & $\sm{3}$ & no: $\Sp{4}=\frac12$,  \\
\hline
$\sm{d}, 0<d<n$ & $\a_{1,d}$ & $\sm{1,d}$ & $-\w\in I_{d-1}$ \\
\hline
$2<d<n-1$& $I_1$ & $\sm{1}$ & no: $\Sp{d}>0$ \\
\hline
& $I_{d-1}$ & $\sm{d-1}$ & $-\w\in I_d$ \\
\hline
$d=2$ & $I_1$ & $\sm{1}$ & $-\w\in I_2$ \\
\hline
$d=4$ & $[\frac12]\aa_1+2\aa_2+\aa_3$ & $\sm{2}$ &no: $\Sp{4}>0$\\
\hline
$d=2$ & $2\a_1$ & $\sm{1}$ & no: $\Sp{2}=2$ \\
\hline
$n=3, d=2$ & $\aa_1+\aa_3$ & $\a_2$ & no: $\Sp{2}=2$ \\
\hline
& $\frac12(\aa_1+\aa_3)$ & $\a-2$ & $-\w\in I_2$\\
\hline
\end{longtable}
}
\end{center}

\section{List of Hamiltonian manifolds of rank one}


\begin{thm}[Momentum Polytopes of Rank one] 
 A compact multiplicity free Hamiltonian $K$-manifold of rank one for $K$ simple with moment polytope $\P=[X_1X_2]$ such that $\a(X_1)=0$ or $\a(X_2)=0$ for every simple root $\a_1, \dots, \a_n$ corresponds to a spherical pair $(\P,\L_S)$ with the following properties:
 \begin{itemize}
\item $\P$ is a line segment $[X_1X_2]:=(X_1+\R\w)\cap \bar\A$ for some $\w$ found in the list below and $X_1$ such that $S(X_1)$ is the local root system indicated.
\item The lattice $\L_S$ is generated by this $\w$ 

\end{itemize}
\end{thm}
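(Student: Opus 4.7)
The plan is to invoke the Knop-type correspondence between multiplicity free Hamiltonian $K$-manifolds and spherical pairs $(\P,\L_S)$ in the classical (non-affine) setting (the Hamiltonian analogue of the bijection cited as \cite{Kno14} thm.~6.7), and then reduce the classification to a finite case-check exactly parallel to the genuine quasi-Hamiltonian one, but now in the finite root system of $K$ rather than in an affine one. By the structure corollary just proved, the possible momentum polytopes split into three types (bi-homogeneous, bi-inhomogeneous, mixed), and in the Hamiltonian setting we have already shown that the bi-homogeneous case is empty, while the other two force $|S(X_1)|=n-1$ (and $|S(X_2)|=n-1$ in the bi-inhomogeneous case). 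Hence it suffices to enumerate, for each simple $K$, the subsystems $S(X_1)\subseteq S$ of cardinality $n-1$, list all local models available there via \cref{rank1var}, \cref{inh1} and the Akhiezer/Knop classification recalled in \cref{lunad} and \cref{rank1model}, and check for each candidate $\w$ whether the opposite endpoint $X_2=X_1+c\w$ admits a compatible local model.

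First I would set up the notation and explain why the finite (Hamiltonian) version of the spherical-pair correspondence reduces the problem to finding all admissible pairs $(\P,\L_S)$ with $\P=[X_1,X_2]$ a line segment in the closed positive Weyl chamber, $\L_S=\Z\w$, and $X_2=X_1+c\w$ with $c>0$. Second I would recall that the already-proved corollary, whose proof is the obvious transcription of \cref{incor1} and \cref{mixcor2}, restricts attention to $S(X_1)$ with exactly one missing simple root; call it $\a_j$. Since $X_1$ lies on the hyperplane orthogonal to all of $S\setminus\{\a_j\}$, the free parameters in $X_1$ and in $\w$ can be counted as in the inhomogeneous structure theorem just established, so the compatibility check at $X_2$ reduces to a linear system of the same rank as $S\setminus\{\a_j,\a_k\}$, which is always solvable.

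Third, for each simple Lie algebra type $A_n, B_n, C_n, D_n, E_6, E_7, E_8, F_4, G_2$ I would go through the $n$ subsystems $S\setminus\{\a_j\}$ and tabulate the candidate local models exactly in the format used for the quasi-Hamiltonian case. For a homogeneous model $\w\in H(j)$ one reads off \cref{trick} to determine $S(X_2)$ and then either recognises $-\w$ as a spherical root of rank one in $S(X_2)$ (mixed case) or shows by \cref{lunad} and \cref{inh1} that no compatible model exists. For an inhomogeneous $\w\in I_k(j)$ one applies the theorem just proved to conclude $S(X_2)=S\setminus\{\a_k\}$ and checks whether $-\w\in I_j(k)$ there. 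The author has already performed the $A_n$ case in the table above, and the remaining types follow the same pattern: for each type the list of subsystems $S\setminus\{\a_j\}$ is short, and \cref{inh1} immediately rules out all subsystems whose connected component carrying the $\sharp$-symbol is of type different from $A$ or $C$, which dramatically restricts the inhomogeneous candidates.

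The main obstacle is purely bookkeeping: the case analysis, though each case is a short computation of a pairing $\langle\w,\a^\vee\rangle$ followed by a lookup in \cref{lunad} or \cref{inh1}, must be carried out exhaustively for every type and every deleted root. For the exceptional types $E_i$ and $F_4$ one must also verify, using the coefficients of $\delta$ and the absence of subsystems of type $B_3$ (as in the proof of \cref{homlem3}, adapted to finite root systems), that no exotic inhomogeneous model slips through. Beyond that, the resulting list agrees, after removing every row in the quasi-Hamiltonian classification whose polytope touched the wall $\a_0=0$, with the rows of the quasi-Hamiltonian list supported on the finite subsystem $S\setminus\{\a_0\}$; concretely, each Hamiltonian entry is obtained from exactly one genuine quasi-Hamiltonian entry of the affinisation of $K$ by forgetting the affine simple root, and this observation provides an independent sanity check on the enumeration.
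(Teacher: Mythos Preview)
Your overall strategy matches the paper's exactly: invoke the spherical-pair correspondence, use the already-proved structure results (no bi-homogeneous polytopes; mixed and bi-inhomogeneous force $|S(X_1)|=n-1$) to reduce to finitely many vertices, and then run the same tabular case-check as in the genuine quasi-Hamiltonian part. That is precisely how the paper proceeds, so the first two paragraphs are fine.

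However, your last two paragraphs contain genuine errors. First, invoking ``the coefficients of $\delta$'' and \cref{homlem3} for the exceptional types is misplaced: in the finite (Hamiltonian) setting there is no affine imaginary root $\delta$, and \cref{homlem3} concerns bi-homogeneous polytopes, which you have already excluded by linear independence of the finite simple roots. The case-check for $E_6,E_7,E_8$ is simply the same table method as for the classical types; it yields no admissible pairs (note the paper's list has no $E$ entries), and no $\delta$-argument is needed or available.

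Second, your proposed ``sanity check'' is false. Every genuine quasi-Hamiltonian polytope touches the wall $\a_0=0$ by definition, so ``removing every row whose polytope touched $\a_0=0$'' removes everything. More substantively, the Hamiltonian list is \emph{not} obtained from the quasi-Hamiltonian one by forgetting $\a_0$: for instance, the bi-inhomogeneous entry B2 for $B_3$ ($\w\in I_3(1)$, $-\w\in I_1(3)$) has no counterpart in the $B_3^{(1)}$ list, and the paper explicitly observes that the central scalars $N$ are always $1$ in the genuine case but generically not in the Hamiltonian case. Drop this paragraph; the case-by-case enumeration stands on its own.
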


\begin{lis}\label{ham1} List of Hamiltonian manifolds of rank one
\tiny{
\begin{longtable}{|p{0.7cm}|p{2cm}|p{3.2cm}|p{2cm}|p{3.2cm}||p{1.5cm}|}
\hline
&$\w$ &local model & $-\w$ &local model & remarks\\\endhead
\hline
\multicolumn{6}{|c|}{$A_{3}$} \\
\hline
A1)&$\frac 12 (\a_1+\a_3)\in H(2)$ &$(\sll(2)\times \sll(2),\Delta\sll(2),0)$& $\boldsymbol{-w\in I_2(1,3)}$     &$(\tt^1+\sll(2)+\tt^1,\tt^1+\sll(2)+\tt^1,\C^2_{\nicefrac12,-\nicefrac12})$ &   \\
\hline
\multicolumn{6}{|c|}{$A_{4}$} \\
\hline
A2)&$\a_1+2\a_2+\a_3\in H(4)$ &$(\sll(4),\sp(4),0)$& $\boldsymbol{-w\in I_4(2)}$    &$(\tt^1+\sll(3),\tt^1+\sll(3),\C^3_2)$  &   \\
\hline
\multicolumn{6}{|c|}{$A_{n}$} \\
\hline
A3)&$\a_1+\dots+\a_k\in H(k+1)$ &$(\sll(k),\gll(k-1),0)$ & $\boldsymbol{-w\in}\linebreak \boldsymbol{I_{k+1}(1,k)}$   &$(\tt^1+\sll(n-k)+\tt^1,\tt^1+\sll(n-k)+\tt^1,\C^{n-k}_{1,1})$  &   \\
\hline
A4)&$ \boldsymbol{\w\in I_k(k+1)}$ &$(\tt^1+\sll(k),\tt^1+\sll(k),\C^k_{-\nicefrac{k}{n+1}})$ & $\boldsymbol{-\w\in I_{k+1}(k)}$ & $(\tt^1+\sll(n-k),\tt^1+\sll(n-k),\C^{n-k}_{\nicefrac{n-k}{n+1}})$& \\
\hline
\addlinespace[0.3cm]
\hline
\multicolumn{6}{|c|}{$B_{3}$} \\
\hline
B1)&$\frac 12 (\a_1+\a_3)\in H(2)$ & $(\sll(2)\times \sll(2),\Delta\sll(2),0)$ & $\boldsymbol{-w\in I_2(1,3)}$  &$(\tt^1+\sll(2)+\tt^1,\tt^1+\sll(2)+\tt^1,\C^2_{\nicefrac12, -\nicefrac12})$    &   \\
\hline
B2)&$\boldsymbol{\w \in I_3(1)}$ &$(\tt^1+\sp(4),\tt^1+\sp(4),\C^4_{\nicefrac12})$ & $\boldsymbol{-\w \in I_1(3)}$ & $(\tt^1+\sp(4),\tt^1+\sp(4),\C^4_{-\nicefrac12})$ &\\
\hline
\multicolumn{6}{|c|}{$B_{4}$} \\
\hline
B3)&$\a_2+2\a_3+3\a_4 \in H(1)$ &$(\soo(7),G_2,0)$ & $\boldsymbol{-w\in I_1(4)}$ & $(\tt^1+\sll(4),\tt^1+\sll(4),\C^4_{-3})$      &   \\
\hline
B4)&$\frac 12 \a_1+\a_2+\frac 12 \a_3 \in H(4)$ & $(\sll(4),\sp(4),0)$ & $\boldsymbol{-\w \in I_4(2)}$ & $(\tt^1+\sp(4),\tt^1+\sp(4),\C^4_{-1})$ &  \\
 \hline
\multicolumn{6}{|c|}{$B_{n}$} \\
\hline
B5)&$\a_{k+1}+\dots+\a_n\in H(k)$ &$(\soo(2(n-k)+1),\soo(2(n-k)),0)$  & $\boldsymbol{-w\in}\linebreak \boldsymbol{I_k(k+1)}$  &$(\tt^1+\sll(k),\tt^1+\sll(k),\C^k_{-1}) $    &   \\
\hline
B6)&$\boldsymbol{\w\in I_{n-1}(n)}$ & $(\tt^1+\sll(n),\tt^1\sll(n),\C^n)$ & $\boldsymbol{-\w\in}\linebreak \boldsymbol{ I_n(n-1)}$ & $(\tt^1+\sll(2),\tt^1+\sll(2),\C^2_{-\nicefrac32})$ & \\
\hline
\addlinespace[0.3cm]
\hline
\multicolumn{6}{|c|}{$C_{n}$} \\
\hline
C1)&$\a_1+\dots+\a_{k}\in H(k+1)$ & $(\sll(k),\gll(k-1),0)$  & $\boldsymbol{-w\in}\linebreak\boldsymbol{ I_{k+1}(1,k)}$ & $(\tt^1+\sp(2n-2k),\tt^1+\sp(2n-2k),\C^{2n-2k}_{-1})$  &  $1\le k <n$ \\
\hline
C2) &$\a_2+2\a_3+\dots+2\a_{n-1}+\a_n \in H(1)$& $(\sp(2(n-1)),\sll(2)+\sp(2n-3),0)$ & $\boldsymbol{-\w\in I_1(3)}$ &$(\tt^1+\sll(3),\tt^1+\sll(3),\C^3_{-1})$ & \\
\hline
C3)&$\boldsymbol{\w \in I_k(k+1)}$ &$(\tt^1+\sll(k+1),\tt^1+\sll(k+1),\C^{k+1}_{-1})$ &  $\boldsymbol{-\w\in I_{k+1}(k)}$ &$(\tt^1+\sp(2(n-k)),\tt^1+\sp(2(n-k)),\C^{2n-2k})$ & \\
\hline
\addlinespace[0.3cm]
\hline
\multicolumn{6}{|c|}{$D_{4}$} \\
\hline
D1)&$\frac 12 (\a_i+2\a_2+\a_j) \in H(4)$ &$(\sll(4),\sp(k),0)$ & $\boldsymbol{-w\in I_k(2)}$ &$(\tt^1+\sll(2),\tt^1+\sll(2),\C^2_{-1})$     & $i,j,k\in 1,3,4; i \ne j \ne k$  \\
\hline
D2)&$\frac 12 (\a_i+\a_j) \in H(2)$ & $(\sll(2)\times \sll(2),\Delta\sll(2),0)$&  $ \boldsymbol{-\w \in I_2(i,j)}$ & $(\tt^1+\sll(3)+\tt^1,\tt^1+\sll(3)+\tt^1,\C^3_{-\nicefrac12,-\nicefrac12})$ & $i,j\in 1,3,4; i \ne j$\\
\hline
D3)&$\a_i+\a_2+\a_j \in H(k)$ &$(\sll(4),\gll(3),0)$& $\boldsymbol{-\w \in I_k(i,j)}$&$(\tt^1+\sll(3)+\tt^1,\tt^1+\sll(3)+\tt^1,\C^3_{1,-1})$ & $i,j,k\in 1,3,4; i \ne j \ne k$\\
\hline
\multicolumn{6}{|c|}{$D_{n}$} \\
\hline
D4)&$\a_{k+1}+\dots+\a_{n-2}+\frac 12 \a_{n-1}+\frac 12 \a_n \in H(k)$ &$(\soo(2(n-k)),\soo(2(n-k)-1),0)$\ & $\boldsymbol{-\w \in}\linebreak \boldsymbol{ I_k(k+1)}$& $(\tt^1+\sll(k),\tt^1+\sll(k),\C^{k}_{-1})$ &  \\
\hline
D5)&$\a_1+\dots + \a_{n-1} \in H(n)$ & $(\sll(n),\gll(n-1),0)$ & $\boldsymbol{-\w \in}\linebreak\boldsymbol{ I_n(1,n-1)}$ &$(\tt^1+\sll(n-1)+\tt^1,\tt^1+\sll(n-1)+\tt^1,\C^{n-1}_{1,-1})$ & or n and n-1 switched \\
\hline
D6)&$\boldsymbol{\w\in I_{n-1}(n)}$ & $(\sll(n),\sll(n),\C^n_{-\nicefrac12})$ & $\boldsymbol{-w\in}\linebreak\boldsymbol{ I_n(n-1)}$  &$(\sll(n),\sll(n),\C^n_{-\nicefrac12})$    & or n and n-1 switched  \\
\hline
\pagebreak
\hline
\multicolumn{6}{|c|}{$F_{4}$} \\
\hline
F1)&$\a_2+2\a_3+\a_4 \in H(1)$ &$(\sp(6),\sll(2)+\sp(4),0)$ & $\boldsymbol{-w\in I_1(3)}$   &$(\tt^1+\sll(3),\tt^1+\sll(3),\C^3_{-2})$   &   \\
\hline
F2)&$\a_3+\a_4\in H(2)$ & $(\sll(3),\gll(2),0)$ & $ \boldsymbol{-\w \in I_2(3,4)}$ &$(\tt^1+\sll(3)+\tt^1,\tt^1+\sll(3)+\tt^1,\C^3_{-1,-1})$& \\
\hline
F3)&$\a_1+\a_2+\a_3 \in H(4)$ &$(\soo(7),\soo(6),0)$ & $\boldsymbol{-\w \in I_4(1)}$ & $(\tt^1+\sp(6),\tt^1+\sp(6),\C^6_{-1})$ & \\
\hline
F4)&$\boldsymbol{\w \in I_3(2)}$ &$(\sll(3),\sll(3),\C^3_{2})$ & $\boldsymbol{-\w \in I_2(3)}$ &$(\tt^1+\sll(3),\tt^1+\sll(3),\C^3_{-2})$ & \\
\hline
\addlinespace[0.3cm]
\hline
\multicolumn{6}{|c|}{$G_{2}$} \\
\hline
G1)&$\a_1\in H(2)$ &$(\sll(2),0,0)$ & $\boldsymbol{-w\in I_2(1)}$  & $(\tt^1+\sll(2),\tt^1+\sll(2),\C^2_{-1})$    &   \\
\hline
\end{longtable}
}
\end{lis}

\begin{longtable}{c|c|c|c|c}
$
\begin{picture}(5000,3000)
\put(0,0){\usebox\dynkinathree}
\multiput(0,0)(3600,0){2}{\circle{600}}
\multiput(0,300)(3600,0){2}{\line(0,1){600}}
\put(0,900){\line(1,0){3600}}
\put(1600,-350){$\btl$}
 \end{picture}
$&$\begin{picture}(7200,3000)
\put(0,0){\usebox\dynkinafour}
\put(1800,0){\circle*{600}}
\put(5200,-250){$\btl$}
 \end{picture}$&$
\begin{picture}(9000,3000)
\put(0,0){\usebox\shortam}
\put(3600,0){\usebox\edge}
\put(5400,0){\usebox{\shortsusp}}
\put(7200,0){\usebox\dynkinatwo}
\put(5100,-350){$\btr$}
 \end{picture}
$&
$
\begin{picture}(9000,3000)
\put(0,0){\usebox\susp}
\put(3600,0){\usebox\dynkinatwo}
\put(5400,0){\usebox\susp}
\put(3300,-350){$\btl$}
\put(5100,-350){$\btr$}
 \end{picture}
$&$
\begin{picture}(9000,3000)(-1800,0)
\put(0,0){\usebox\dynkinbthree}
\multiput(0,0)(3600,0){2}{\circle{600}}
\multiput(0,300)(3600,0){2}{\line(0,1){600}}
\put(0,900){\line(1,0){3600}}
\put(1500,-350){$\btd$}
 \end{picture}
$\\
A1)&A2)&A3)&A4)&B1)\\
\hline
$
\begin{picture}(5000,3000)
\put(0,0){\usebox\dynkinbthree}
\put(3300,-350){$\btl$}
\put(-300,-350){$\btr$}
 \end{picture}
$&$
\begin{picture}(5000,3000)
\put(0,0){\usebox\dynkinbfour}
\put(5400,0){\circle*{600}}
\put(-300,-350){$\btr$}
\end{picture}
$
&
$
\begin{picture}(5000,3000)
\put(0,0){\usebox\dynkinbfour}
\put(1800,0){\circle*{600}}
\put(5100,-350){$\btl$}
\end{picture}
$&$
\begin{picture}(9000,3000)
\put(0,0){\usebox\shortsusp}
\put(1800,0){\usebox\dynkinatwo}
\put(3600,0){\usebox\shortsusp}
\put(5400,0){\usebox\dynkinbthree}
\put(3600,0){\circle*{600}}
\put(1500,-350){$\btl$}
\end{picture}
$
&
$
\begin{picture}(9000,3000)
\put(0,0){\usebox\dynkinatwo}
\put(1800,0){\usebox\shortsusp}
\put(3600,0){\usebox\dynkinbthree}
\put(5100,-350){$\btl$}
\put(6800,-350){$\btr$}
\end{picture}
$\\
B2) &B3) & B4) & B5) & B6)\\
\hline
&& $
\begin{picture}(9000,3000)
\put(0,0){\usebox\shortam}
\put(3600,0){\usebox\dynkinatwo}
\put(5400,0){\usebox\shortsusp}
\put(7200,0){\usebox\leftbiedge}
\put(5100,-350){$\btr$}
\end{picture}
$
&  
$
\begin{picture}(9000,3000)
\put(0,0){\usebox\dynkinathree}
\put(3600,0){\usebox\shortsusp}
\put(5400,0){\usebox\dynkincthree}
\put(-300,-350){$\btr$}
\put(3600,0){\circle*{600}}
\end{picture}
$
&
 $
\begin{picture}(9000,3000)
\put(0,0){\usebox\shortsusp}
\put(1800,0){\usebox\dynkinatwo}
\put(3600,0){\usebox\shortsusp}
\put(5400,0){\usebox\dynkincthree}
\put(3300,-350){$\btr$}
\put(1500,-350){$\btl$}
\end{picture}
$\\
&&C1)&C2)&C3)\\
\hline
 $
\begin{picture}(5000,3000)(0,-900)
\put(0,0){\usebox\dynkindfour}
\put(1200,800){\tiny $\nicefrac12$}
\put(1800,0){\circle*{600}}
\put(2600,1200){$\bt$}
\end{picture}
$
&
 $
\begin{picture}(5000,4000)(0,-900)
\put(0,0){\usebox\dynkindfour}
\put(0,0){\circle{600}}
\put(3000,1200){\circle{600}}
\put(3000,1500){\line(0,1){300}}
\put(3000,1800){\line(-1,0){3000}}
\put(0,300){\line(0,1){1500}}
\put(1400,-300){$\btd$}
\put(900,2200){\tiny $\nicefrac12$}
\end{picture}
$
& $
\begin{picture}(5000,3000)(0,-900)
\put(0,0){\usebox\athreene}
\put(2600,-1500){$\bt$}
\end{picture}
$
\\
D1) &D2) & D3)\\
\hline
&&
 $
\begin{picture}(9000,3000)
\put(0,0){\usebox\shortsusp}
\put(1800,0){\usebox\dynkinatwo}
\put(3600,0){\usebox\shortsusp}
\put(5400,0){\usebox\dynkindfour}
\put(3600,0){\circle*{600}}
\put(3300,600){\tiny$\nicefrac12$}
\put(1500,-350){$\btl$}
\end{picture}
$
&
 $
\begin{picture}(9000,3000)
\put(0,0){\usebox\amne}
\put(6300,-1500){$\bt$}
\end{picture}
$
&
 $
\begin{picture}(9000,3000)
\put(0,0){\usebox\susp}
\put(3600,0){\usebox\dynkindfour}
\put(6000,600){$\btd$}
\put(6000,-1500){$\bt$}
\end{picture}
$\\
&&D4)&D5)&D6)\\
\hline
$\begin{picture}(5000,3000)
\put(0,0){\usebox\dynkinffour}
\put(3600,0){\circle*{600}}
\put(-300,-350){$\btr$}
\end{picture}
$
&
$\begin{picture}(5000,3000)
\put(0,0){\usebox\dynkinffour}
\put(3600,0){\usebox\atwo}
\put(1500,-350){$\btl$}
\end{picture}
$&
$\begin{picture}(5000,3000)
\put(0,0){\usebox\dynkinffour}
\put(0,0){\circle*{600}}
\put(5100,-350){$\btl$}
\end{picture}
$
&
$\begin{picture}(5000,3000)
\put(0,0){\usebox\dynkinffour}
\put(1500,-350){$\btl$}
\put(3300,-350){$\btr$}
\end{picture}
$&$\begin{picture}(5000,3000)
\put(0,0){\usebox\dynkingtwo}
\put(0,0){\usebox\aone}
\put(1500,-350){$\bt$}
\end{picture}
$\\
F1&F2&F3) & F4) & G1) \\

\end{longtable}

\end{document}